\numberwithin{equation}{section}
\newtheorem{theorem}{Theorem}[section]
\newtheorem{definition}[theorem]{Definition}
\newtheorem{lemma}[theorem]{Lemma}
\newtheorem{remark}[theorem]{Remark}
\newtheorem{proposition}[theorem]{Proposition}
\newtheorem{corollary}[theorem]{Corollary}
\numberwithin{equation}{section}
\newcommand*{\Id}{\ensuremath{\mathrm{Id}}}
\newcommand*{\supp}{\ensuremath{\mathrm{supp\,}}}
\newcommand{\aint}{{\fint}}
\newcommand{\T}{{\mathbb{T}}}
\newcommand{\rr}{R}
\newcommand{\rrq}{R_q}
\newcommand{\rl}{R_\ell}
\newcommand{\p}{\partial}
\renewcommand{\P}{\mathbb{P}}
\renewcommand{\div}{{\mathrm{div}}}
\renewcommand{\rq}{{\rho_q}}
\newcommand{\rqq}{{\rho_{q+1}}}
\newcommand{\m}{{m_q}}
\renewcommand{\d}{{\rm d}}
\newcommand{\g}{g_{(k)}}
\newcommand{\norm}[1]{\lVert#1\rVert}
\newcommand{\la}{\lambda_q}
\newcommand{\laq}{\lambda_{q+1}}
\newcommand{\dqq}{\delta_{q+1}}
\newcommand{\va}{\varepsilon}
\newcommand{\wo}{w_{q+1}^{(o)}}
\newcommand{\thq}{z_{q+1}}
\newcommand{\mq}{m_{q+1}}
\renewcommand{\u}{u_q}
\newcommand{\rqe}{\rho_{q+1}^{-1}}
\newcommand{\R}{{\mathbb R}}
\newcommand{\lbb}{\overline{\lambda}}
\def\a{{\alpha}}
\def\vf{{\varphi}}
\def\lbb{\lambda}
\def\wt{\widetilde}
\def\9{{\infty}}
\def\ve{{\varepsilon}}
\def\na{{\nabla}}
\def\bbr{{\mathbb{R}}}
\def\({\left(}
\def\){\right)}				
\begin{document}
	
\title[] {Non-uniqueness for the hypo-viscous compressible Navier-Stokes equations}

\author{Yachun Li}
\address{School of Mathematical Sciences, CMA-Shanghai, MOE-LSC, and SHL-MAC,  Shanghai Jiao Tong University, China.}
\email[Yachun Li]{ycli@sjtu.edu.cn}
\thanks{}

\author{Peng Qu}
\address{School of Mathematical Sciences $\&$ Shanghai Key Laboratory for Contemporary Applied Mathematics, Fudan University, China.}
\email[Peng Qu]{pqu@fudan.edu.cn}
\thanks{}

\author{Zirong Zeng}
\address{School of Mathematical Sciences, Shanghai Jiao Tong University, China.}
\email[Zirong Zeng]{beckzzr@sjtu.edu.cn}
\thanks{}

\author{Deng Zhang}
\address{School of Mathematical Sciences, CMA-Shanghai, Shanghai Jiao Tong University, China.}
\email[Deng Zhang]{dzhang@sjtu.edu.cn}
\thanks{}

\keywords{Convex integration,
compressible Navier-Stokes equations,
hypo-viscousity,
non-uniqueness}

\subjclass[2020]{35A02,\ 35Q30,\ 76N06.}

\begin{abstract}
We study the Cauchy problem for the isentropic hypo-viscous compressible Navier-Stokes equations (CNS)
under general pressure laws in all dimensions $d\geq 2$.
For all hypo-viscosities $(-\Delta)^\alpha$ with $\alpha\in (0,1)$,
we prove that there exist infinitely many weak solutions with the same initial data.
This provides the first non-uniqueness result of weak solutions
to viscous compressible fluid.
Our proof features new constructions of building blocks 
for both the density and momentum, 
which respect the compressible structure. 
It also applies to the compressible Euler equations and
the hypo-viscous incompressible Navier-Stokes equations (INS).
In particular, 
in view of the Lady\v{z}enskaja-Prodi-Serrin criteria,
the obtained non-uniqueness of $L^2_tC_x$ weak solutions to the hypo-viscous INS is sharp, 
and reveals that $\alpha =1$ is the sharp viscosity threshold
for the well-posedness in $L^2_tC_x$.
Furthermore, we prove that the H\"older continuous weak solutions
to the compressible Euler equations
may be obtained as a strong vanishing viscosity limit of a sequence of
weak solutions to the hypo-viscous CNS.
\end{abstract}

\maketitle

{
\tableofcontents
}

\section{Introduction and main results}    \label{Sec-Intro}

\subsection{Background} \label{Subsec-intro}

We consider the
$d$-dimensional isentropic hypo-viscous compressible Navier-Stokes equations (CNS for short) on the torus $\T^d:=[-\pi,\pi]^d$,
\begin{equation}\label{equa-NS}
\left\{\aligned
& \p_t \rho +\div (\rho u) = 0,\\
&\p_t(\rho u)+\mu(-\Delta)^{\a}u-(\mu+\nu)\nabla \div u +\div(\rho u\otimes u) + \nabla P(\rho)=0.
\endaligned
\right.
\end{equation}
where $d\geq 2$, $\rho:[0,T]\times \T^d\to (0,\infty)$
and $u: [0,T]\times \T^d \to \bbr^d$ are the mass density and velocity of fluid, respectively.
The coefficients $\mu$ and $\nu$ are constants,
$\mu$ is the shear viscosity coefficient,
$\nu+\frac{2}{d}\mu$ is the bulk viscosity coefficient satisfying the physical assumptions
\begin{align}
\mu>0,\quad \nu+\frac{2}{d}\mu\geq 0.
\end{align}
The hypo-viscosity $(-\Delta)^{\alpha}$, $\alpha\in(0,1)$, is defined
by the Fourier transform
$$
\mathcal{F}((-\Delta)^{\alpha}u)(\xi)=|\xi|^{2\alpha}\mathcal{F}(u)(\xi),\ \ \xi\in \mathbb{Z}^d,
$$
which arises in various models including, e.g.,
pure jump L\'evy process \cite{sato99}, radiation hydrodynamics \cite{rose89} and flows in porous media \cite{dqrv11,dqrv12}.

The pressure
$P(\rho)$ is a function of density.
It is usually assumed that $P(\rho)$ satisfies
the monotonicity assumptions  $P(\rho)>0,\ P'(\rho)>0$ and $P''(\rho)\geq 0$  for $\rho>0$ for  well-posedness problems.
For instance,
one typical example is the polytropic case $P(\rho)=A \rho^\gamma$,
where $A>0$ and $\gamma>1$.
In the present work,
the monotonicity condition is not necessary,
and we only assume that
\begin{align}\label{p-bound}
    P(\rho)\ \text{is}\ C^2-\text{regular with respect to}\ \rho.
\end{align}

We note that,
\eqref{equa-NS} is the classical compressible Navier-Stokes equations when $\alpha=1$,
and \eqref{equa-NS} reduces to the compressible Euler equations
when the viscosities vanish $\mu=\nu=0$:
\begin{equation}\label{equa-Euler}
	\left\{\aligned
	& \p_t \rho +\div (\rho u) = 0,\\
	&\p_t(\rho u) +\div(\rho u\otimes u) + \nabla P(\rho) =0.
	\endaligned
	\right.
\end{equation}
It would also be convenient to formulate \eqref{equa-NS} in terms of
the density and momentum $m:=\rho u$ as follows
\begin{align}\label{equa-mns}
\begin{cases}
	\p_t \rho +\div m = 0,\\
	\p_t m+\mu(-\Delta)^{\a} (\rho^{-1}m) -(\mu+\nu)\nabla \div (\rho^{-1}m)
      +\div(\rho^{-1}m\otimes m) + \nabla P(\rho) =0.
\end{cases}
\end{align}

The compressible Navier--Stokes equations \eqref{equa-NS}
with $\alpha = 1$
is one of the famed models of fluid dynamics
and has been extensively studied in literature.
In the 1D case,  when the initial data are away from vacuum,
the global well-posedness of classical solutions with large initial data
has been proved for initial-boundary value problem by Kazhikhov-Shelukhin  \cite{KS77} in 1977,
and for Cauchy problem by Kazhikhov \cite{K82} in 1982.
See also Kanel$'$ \cite{kanel} for the case of small initial data.
In contrast, the multi-dimensional case becomes much more complicated.
For the case without vacuum,
Serrin \cite{S59} and Nash \cite{N62} proved the uniqueness and local existence of 3D classical solutions in 1959 and 1962,
respectively.
See also Itaya \cite{IT71, IT76} and Tani \cite{tn77}.
For the global well-posedness, Matsumura-Nishida \cite{MN80}
proved the uniqueness and existence of classical solutions with small initial data.
The case with certain kinds of smallness on initial data was proved in \cite{CMZ10,HLX12,LX19},
and the finite time blow-up with large data was studied in  \cite{X98,XY13,MRRS22}.
Furthermore, Lions \cite{L93,L98} introduced the concept of renormalized solutions to establish
the global existence of finite energy weak solutions for $\gamma-$pressure law with $\gamma>9/5$,
concerning large initial data that may vanish.
It was then extended by
Feireisl-Novotn\'y-Petzeltov\'a \cite{FNP01} (see also \cite{FE04})  to $\gamma>3/2$,
and was further extended to $\gamma>1$ by Jiang-Zhang \cite{JZ03}  for 3D axisymmetric flow.
These existence results were also extended by Vasseur-Yu \cite{VY16}
and Li-Xin \cite{LX15}
to the case of density-dependent viscosity for $1<\gamma<3$.
Recently, Bresch-Jabin \cite{BJ18} proved the global existence for more general pressure laws
without any monotonicity assumption and even more general viscous stress tensors than
those previously covered by Lions-Feireisl's theory.

However,
besides the weak-strong uniqueness results (see, e.g., \cite{FJN12}),
the uniqueness of weak solutions is still a long standing open problem.
The aim of the present work is to
make some progress towards the non-uniqueness problem
for the compressible fluid models with viscosity.

Since the ground breaking works \cite{dls09,dls10} by De Lellis and Sz\'{e}kelyhidi
on the existence of infinitely many solutions to incompressible Euler equations,
there have been significant progresses towards the non-uniqueness problem for various fluid models
in the last decade.
One milestone is the resolution of the flexible part of Onsager's conjecture for incompressible Euler equations,
by Isett \cite{I18} and Buckmaster-De Lellis-Sz\'{e}kelyhidi-Vicol \cite{bdsv19}.
See also Bressan-Murray \cite{BM20} for other constructive methods.

For the Euler equations of inviscid compressible fluid,
although the uniqueness and stability results have been well established in the 1D case
with small BV initial data and mild assumptions (see, e.g., \cite{BCP00,LY99}),
it was a long time open problem for the multi-dimensional case  until it was observed
by De Lellis-Sz\'{e}kelyhidi \cite{dls10}, that
non-unique bounded entropy solutions can be constructed for multi-dimensional compressible Euler equations.
Afterwards, several achievements have been obtained
for the flexibility of compressible Euler equations.
It has been proved in \cite{CDlK15,CKMS21,MK18,KKMM20} that,
for the one-dimensional Riemann initial data,
the multi-dimensional $L^\infty$ entropy weak solutions are not unique,
provided the one-dimensional solution contains  at least one shock.
Non-uniqueness of entropy weak solutions to the Chaplygin gas with
one-dimensional Riemann initial data was constructed in \cite{BKM21},
provided the one-dimensional solution contains a contact discontinuity or a delta shock.
In \cite{LXX16},
Luo-Xie-Xin proved that neither the damping nor the rotation effect can prevent this non-uniqueness mechanism.
Furthermore,
the dense existence of the initial data
that generate non-unique solutions was provided
by Chen-Vasseur-Yu \cite{CVY21} and Chiodaroli-Feireisl \cite{CF22}.
Very recently,
non-unique smooth density and H\"older-continuous momentum solutions to the
3D compressible Euler equations have been constructed by Giri-Kwon \cite{GK22}. We also refer to Feireisl-Li \cite{FL20} for the non-uniqueness of weak solutions to the inviscid compressible fluid under the mutual interactions with magnetic field.

Regarding the incompressible fluid models with viscosity,
in the breakthrough work  \cite{bv19b} Buckmaster-Vicol proved the non-uniqueness of weak solutions
to the 3D incompressible Navier-Stokes equations (INS for short).
One of the key ingredients in \cite{bv19b} is the spatial intermittency,
which in particular permits to control the viscosity.
The intermittent convex integration schemes have been applied successfully
to various viscous fluid models
including,
e.g.,
INS \cite{bcv21,cl20.2,luo19,lt20,lq20,lqzz22},
MHD equations \cite{bbv20,dai18,lzz21,lzz21.2,MY22,NY22}
and Euler equations \cite{BMNV21,NV22}.
We would like to mention that,
the non-uniqueness results proved by Luo-Titi \cite{lt20}
and Buckmaster-Colombo-Vicol \cite{bcv21}
show that,
the Lions exponent $\alpha=5/4$
is the sharp threshold viscosity for the $C_tL^2_x$ well-posedness
of the 3D hyper-viscous INS.
Moreover,
in \cite{cl20.2}
Cheskidov-Luo proved the non-uniqueness
in $L^p_tL^\infty_x$ for every $1\leq p<2$,
which is sharp
in view of the Lady\v{z}enskaja-Prodi-Serrin criteria.
Recently,
the authors \cite{lqzz22} proved the sharp non-uniqueness
at two endpoint spaces for the 3D hyper-viscous INS,
where the viscosity is beyond the Lions exponent $5/4$.

We would also like to refer to
another programme by Jia-\v{S}ver\'ak \cite{js14,js15} towards
the non-uniqueness problem of Leray-Hopf solutions
to INS,
under a certain assumption of linearized Navier-Stokes operators.
Recently,
Albritton-Bru\'e-Colombo \cite{ABC21} proved the non-uniqueness of
Leray solutions to the 3D INS with a force.
See also \cite{ABC22} for the bounded domain case.
Moreover,
the non-unique Leray solutions of the 2D hypo-viscous forced INS
has been proved in \cite{AC22}.

In contrast to the above extensive studies of
compressible Euler equations and INS,
the non-uniqueness of weak solutions to compressible Navier-Stokes equations
remains a challenging problem.

One major obstruction in the compressible case lies in the relative rigidity of the pressure,
which depends on the density that propagates along the transport equation,
and cannot be simply removed by the Leray projection
as usually done in the incompressible case.
This indeed leads to a rather involving computations of the Reynolds stress,
and requires a careful construction of appropriate perturbations
for both the density and momentum.
More detailed comparisons
between the incompressible and compressible systems with viscosity
can be found in \S \ref{Subsec-Distin} below,
when applying the convex integration scheme.

Furthermore, for the compressible systems,
the result in \cite{LXX16} reveals that the damping and rotation effects,
which can guarantee the well-posedness of small classical solutions,
cannot rule out the non-uniqueness mechanism.
But, for the viscosity effect, there is still no such result.

\subsection{Main results} \label{Subsec-Main}

The aim of this paper is to make some progress
towards the understanding of the non-uniqueness problem
for the compressible Navier-Stokes equations.
We provide the positive answer in the regime of hypo-viscosity.
This shows that the hypo-viscosity cannot rule out
the non-uniqueness mechanism for compressible systems.

Our proof features new constructions of building blocks 
for both the density and momentum, 
which respects the compressible structure. 

Moreover, the strategy of the proof also provides the non-unique $L^2_tC_x$
weak solutions to the hypo-viscous INS (i.e., \eqref{equa-NSE-Incomp} below with $\alpha\in(0,1)$).
As a matter of fact, 
$L^2_tC_x$ is one {\it endpoint space} of the Lady\v{z}enskaja-Prodi-Serrin condition, 
and so one cannot construct non-unique $L^2_tC_x$ weak solutions to
the INS (i.e., \eqref{equa-NSE-Incomp} with $\alpha=1$).
Thus,
the obtained non-uniqueness in $L^2_tC_x$
is sharp for the  hypo-viscous INS,
which reveals that $\alpha=1$
is the sharp viscosity threshold for the well-posedness in the space $L^2_tC_x$.
We note that, 
for another endpoint space $C_tL^2_x$ of the Lady\v{z}enskaja-Prodi-Serrin criteria,
the viscosity threshold is the Lions exponent $\alpha =5/4$,
due to the works \cite{lt20,bcv21,lions69}.

Before formulating the main results,
let us first present the notion of weak solutions
in the distributional sense to the equations \eqref{equa-NS}.

\begin{definition} \label{Def-Weak-Sol} (Weak solutions)
Let $0<T<\infty$. Given any initial data $\rho_0 \in L^{\9}(\T^d)$, $\rho_0> 0$, and $u_0 \in L^2(\mathbb{T}^d)$,
we say that $(\rho, m)\in L^\9([0,T]\times\mathbb{T}^d)\times L^2((0,T)\times \mathbb{T}^d)$
is a weak solution for the hypo-viscous compressible Navier-Stokes equations \eqref{equa-mns} if
\begin{itemize}
\item $\rho\geq 0$ a.e. and
\begin{align*}
	\int\limits_{\mathbb{T}^d} \rho_0(x) \vf(0,x) \d x
	&	= - \int_0^T \int\limits_{\mathbb{T}^d}
	\rho \partial_t \vf + (m\cdot \na) \vf  \d x \d t
\end{align*}
for any test function
$\vf\in C_0^\infty([0,T)\times\mathbb{T}^d)$.
\item $m=0$ whenever $\rho=0$, and
\begin{align*}
	\int\limits_{\mathbb{T}^d} m_0 \vf(0,x) \d x
	&	= - \int_0^T \int\limits_{\mathbb{T}^d}
	m\cdot \partial_t \vf - \frac{1}{\rho}m\cdot (\mu (-\Delta)^\alpha \vf
    - (\mu+\nu)\nabla \div\vf) + (\frac{1}{\rho}m\otimes m):\na\vf + P\div\vf \d x \d t
\end{align*}
for any test function
$\vf\in C_0^\infty([0,T)\times\mathbb{T}^d)$, where $m_0:=\rho_0u_0$ .
\end{itemize}
\end{definition}

The non-uniqueness of weak solutions to the hypo-viscous CNS \eqref{equa-mns} is formulated in Theorem \ref{Thm-Nonuniq-CNS} below.

\begin{theorem} [Non-uniqueness for hypo-viscous CNS] \label{Thm-Nonuniq-CNS}
There exist $\rho_0 \in L^{\9}(\T^d)$, $\rho_0> 0$,
and $m_0 \in L^2(\mathbb{T}^d)$, such that for any exponents $(p,s)$ satisfying
\begin{align} \label{ps-supercri}
	 \a+s-\frac{2\a}{p}<0,  \ \  (p,s)\in [1, 2] \times [0,1),
\end{align}
	there exist infinitely many weak solutions $(\rho, m)\in C_t C^1_x \times L^p_tC^s_x$
	to the hypo-viscous CNS \eqref{equa-mns}
	with the same initial data $(\rho_0, m_0)$.
\end{theorem}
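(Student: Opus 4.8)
The plan is to prove Theorem~\ref{Thm-Nonuniq-CNS} by a convex integration scheme adapted to the compressible structure, building an iterative sequence $(\rho_q, m_q, \mathring{R}_q)$ solving the Navier--Stokes--Reynolds system
\begin{align*}
	\p_t \rho_q + \div m_q &= 0,\\
	\p_t m_q + \mu(-\Delta)^{\a}(\rho_q^{-1}m_q) - (\mu+\nu)\nabla\div(\rho_q^{-1}m_q) + \div(\rho_q^{-1}m_q\otimes m_q) + \nabla P(\rho_q) &= \div \mathring{R}_q,
\end{align*}
where $\mathring{R}_q$ is a (trace-free part of a) symmetric stress that we drive to zero in a suitable norm as $q\to\infty$, while keeping the initial data fixed. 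The key structural point specific to the compressible case is that the density must be perturbed as well: I would first introduce a smooth, well-prepared starting pair at level $q=0$ with a prescribed ``defect'' so that the initial data $(\rho_0,m_0)$ are attained exactly at every stage (e.g.\ by arranging the perturbations to vanish near $t=0$, or by using a time cutoff that localizes the construction to $t \ge t_q$ with $t_q \downarrow 0$). The convergence $\rho_q \to \rho$ in $C_tC^1_x$ and $m_q \to m$ in $L^p_tC^s_x$ then follows from summable iterate estimates, and running the scheme along two different choices of the amplitude functions (or perturbing at a single sufficiently high stage) yields infinitely many distinct limits with the same data.

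The core of the argument is the inductive step $q \mapsto q+1$. First I would mollify $(\rho_q, m_q, \mathring R_q)$ at a length scale $\ell$ to gain derivative control, then define the perturbation. Guided by the abstract remarks in \S\ref{Subsec-Distin}, the momentum perturbation $w_{q+1}$ is built from intermittent building blocks oscillating at frequency $\lambda_{q+1}$: one writes $w_{q+1} = w_{q+1}^{(p)} + w_{q+1}^{(c)} + \cdots$, where the principal part has the form $\sum_{k} a_k(t,x)\, W_k$ with $W_k$ a (rescaled, intermittent) Mikado-type or Beltrami-type flow designed so that $\sum_k a_k^2 \fint W_k\otimes W_k \approx -\mathring R_\ell$ cancels the low-frequency stress via the standard geometric (stationary-phase) lemma. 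The new feature is a companion density perturbation: since $\p_t \rho + \div m = 0$ must hold exactly, one chooses $\rho_{q+1} = \rho_\ell + \theta_{q+1}$ with $\theta_{q+1}$ an oscillatory correction of size controlled by $\delta_{q+1}$, tied to $w_{q+1}$ so that the continuity equation closes (typically by taking $w_{q+1}$ divergence-free to leading order, or by pairing each momentum block with a matching density block so $\div$-errors are higher order), and so that the pressure term $\nabla P(\rho_{q+1})$ and the viscous term $\mu(-\Delta)^\a(\rho_{q+1}^{-1}m_{q+1})$ produce only acceptable errors. The hypo-viscosity $\a<1$ is exactly what makes this affordable: the viscous contribution to the new Reynolds stress scales like $\lambda_{q+1}^{2\a}$ against the gain $\lambda_{q+1}$ from the antidivergence and the intermittency dimension, and the constraint $\a + s - 2\a/p < 0$ is precisely the bookkeeping inequality ensuring that, with $w_{q+1}$ measured in $L^p_tC^s_x$, all error terms (Nash error, transport error, oscillation error, viscous error, pressure/density errors, and the new compressible cross terms $\div(\rho_{q+1}^{-1}m_{q+1}\otimes m_{q+1}) - \div(\rho_\ell^{-1}m_\ell\otimes m_\ell) - \div\mathring R_\ell$) can be absorbed into $\mathring R_{q+1}$ with $\|\mathring R_{q+1}\| \le \delta_{q+2}$, while $\|w_{q+1}\|_{L^p_tC^s_x} \lesssim \delta_{q+1}^{1/2}$ stays summable.

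Concretely, the inductive estimates to propagate are of the form $\|\rho_q - \rho_{q-1}\|_{C^1_x}\lesssim \delta_q$, $\|m_q - m_{q-1}\|_{L^p_tC^s_x}\lesssim \delta_q^{1/2}$, $\|\mathring R_q\|_{L^1_{t,x}}\le \delta_{q+1}$ (with companion higher-norm bounds at the mollified scale), where $\delta_q = \lambda_q^{-\beta}$, $\lambda_q = a^{(b^q)}$, and $\beta>0$ is taken small relative to the gap left by the strict inequality in \eqref{ps-supercri}; one also needs $\rho_q$ to stay bounded away from $0$ and $\infty$ uniformly, which is where $\rho_0>0$ and the $C^1$-smallness of the density perturbations are used (so that $\rho_q^{-1}$ is controlled and $P\in C^2$ may be linearized). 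The inverse-divergence operator must be applied carefully to the viscous and pressure-density terms so as not to lose the intermittency gain; a bilinear/improved antidivergence estimate is the standard tool.

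I expect the main obstacle to be the simultaneous design of the density and momentum building blocks so that \emph{three} constraints hold at once: (i) the momentum blocks cancel the prescribed Reynolds stress through the geometric lemma, (ii) the continuity equation $\p_t\rho_{q+1}+\div m_{q+1}=0$ is satisfied \emph{exactly} (not merely up to error), which rigidly couples $\theta_{q+1}$ to $w_{q+1}$ and forbids the usual ``just take $w$ divergence-free'' shortcut once $\rho$ genuinely oscillates, and (iii) the resulting new error from $\div(\rho_{q+1}^{-1}m_{q+1}\otimes m_{q+1})$ and from $\nabla P(\rho_{q+1})$ — both of which now contain self-interaction terms of the density perturbation that have no analogue in the incompressible problem — remains of the right (small, high-frequency-removable) size. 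Resolving this is exactly the ``new building blocks respecting the compressible structure'' advertised in the abstract; the rest of the proof, once those blocks and the bookkeeping inequality \eqref{ps-supercri} are in hand, is the by-now-standard convergence and non-uniqueness argument.
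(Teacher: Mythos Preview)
Your outline captures the overall convex-integration shape correctly, but there are two genuine gaps that, as stated, would make the scheme fail in the compressible setting.

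First, you propose to work with a trace-free stress $\mathring R_q$ and to cancel it via $\sum_k a_k^2 \fint W_k\otimes W_k \approx -\mathring R_\ell$. This is the incompressible device, and it breaks down here for exactly the reason discussed in \S\ref{Subsec-Distin}(ii): the low-frequency identity one actually obtains is $\sum_k \rho_\ell^{-1}a_k^2\, k\otimes k = \varrho\,\Id - R_\ell$, and in the compressible problem there is no pressure freedom to absorb the trace $\varrho\,\Id$. The paper's resolution is to choose $\varrho$ as a function of $t$ only (essentially $\varrho(t)\sim\|R_\ell(t,\cdot)\|_{C_x}$), so that $\div(\varrho\,\Id)=0$ identically; this forces the Reynolds stress to be controlled in $L^1_tC_x$ rather than your $L^1_{t,x}$, and it dictates the amplitude design. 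Your proposal does not contain this idea, and without it the $\varrho\,\Id$ term is a large error you cannot close.

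Second, and relatedly, your building blocks are described as spatially intermittent Mikado/Beltrami flows, with the viscosity controlled by the ``intermittency dimension'' gained against $\lambda_{q+1}^{2\alpha}$. But the target space for the momentum is $L^p_tC^s_x$: spatial intermittency would destroy $C_x$-control of $w_{q+1}$ (concentrated flows blow up in $L^\infty_x$). The paper instead uses \emph{spatially homogeneous} Mikado flows and places all the intermittency in time via the concentrated temporal profiles $g_{(k)}$ (parameters $\tau,\sigma$ in \eqref{larsrp}); the temporal corrector $w_{q+1}^{(o)}$ is not divergence-free, and the density perturbation is then simply $z_{q+1}(t,x)=-\int_0^t\div w_{q+1}^{(o)}\,ds$, which is small in $C_tC^1_x$ because $\sigma^{-1}$ is small. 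Temporal supports of different $g_{(k)}$ are shifted to be disjoint, which is what makes the scheme work even in $d=2$ where Mikado flows interact spatially. The inequality \eqref{ps-supercri} then encodes the balance between the temporal concentration $\tau$ and the $L^p_t$-integrability, not a spatial intermittency count. Your viscosity/intermittency heuristic does not lead to this mechanism, so as written the bookkeeping would not close for the claimed $(p,s)$ range.

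Finally, a minor point on non-uniqueness: the paper does not vary the amplitudes within one iteration. It proves a stronger approximation statement (Theorem~\ref{Thm-Nonuniq-hypoNSE}) with small-deviation and temporal-support control, then applies it to a one-parameter family of smooth divergence-free momenta $\wt m_j$ supported away from $t=0$; all resulting solutions share the trivial initial data $(\rho_0,m_0)=(1,0)$ and are separated in $L^1_tC_x$.
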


Theorem \ref{Thm-Nonuniq-CNS} is actually
a consequence of a more general result in Theorem~\ref{Thm-Nonuniq-hypoNSE} below.

\begin{theorem} \label{Thm-Nonuniq-hypoNSE}
Let $\alpha \in (0,1)$.
Let $(\wt \rho, \wt m)$ be any given smooth solution to the transport equation
\begin{align}\label{equa-trans}
\p_t \wt \rho+\div \wt m=0
\end{align}
on $\T^d$, $d\geq 2$,
such that $0<c_*\leq\wt\rho(t,x)\leq C_*$ for all $(t,x)\in [0,T]\times \T^d$,
where $c_*$ and $C_*$ are universal constants.
Then, there exists $\beta'\in(0,1)$,
such that for any $\va_*>0$ and any exponents $(p,s)$
satisfying \eqref{ps-supercri},
there exist  $\rho$ and $m$ to \eqref{equa-mns}
such that the following holds:
\begin{enumerate}
    \item[(i)] Weak solutions: $(\rho, m)$ is the weak solution to \eqref{equa-mns}
    in the sense of Definition \ref{Def-Weak-Sol} but with $m_0\in H^{-1}_x$.
	\item[(ii)] Regularity:
            $$\rho\in C_t C^1_x, \ \ m\in H^{\beta'}_tC_x\cap L^p_tC^s_x\cap C_tH^{-1}_x.$$
    \item[(iii)] Mass preservation: $$\int_{\mathbb{T}^d}\rho(t,x)\d x= \int_{\mathbb{T}^d}\wt \rho(t,x)\d x,\ \forall\ t\in [0,T].$$
	\item[(iv)] Small deviation of norms:
	$$\|\rho-\wt \rho\|_{C_{t}C_x^1 }\leq \va_*,\
	\|m-\wt {m}\|_{L^1_tC_x \cap L^p_t C_x^s\cap C_tH^{-1}_x} \leq \va_*.$$
	\item[(v)] Small deviation of temporal support: if $\wt T:=\inf_{t\in [0,T]}\{t\, | \nabla \wt \rho(t,\cdot)\neq0,\, \wt m(t,\cdot)\neq 0  \}>0$,
then for any $0<\ve_*<\wt T$,
	$$\supp_t (\nabla \rho, m)  \subseteq N_{\va_*}([\wt T,T]).$$
\end{enumerate}
\end{theorem}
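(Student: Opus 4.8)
The plan is to prove Theorem~\ref{Thm-Nonuniq-hypoNSE} by a Nash--Moser type convex integration iteration, constructing a sequence of smooth approximate solutions $(\rho_q, m_q)$ to a relaxed system --- the compressible Navier--Stokes--Reynolds system obtained by adding a symmetric stress error $\mathring{R}_q$ and a scalar ``density error'' to the right-hand sides of \eqref{equa-mns}. I would set up inductive estimates governing the size $\delta_{q+1}$ of the stress at frequency level $\lambda_q$ growing super-exponentially (say $\lambda_q = \lceil \lambda_0^{b^q}\rceil$ with $b>1$, $\delta_q = \lambda_q^{-2\beta}$), together with $C^N_x$ bounds on $(\rho_q,m_q)$ of the form $\|m_{q+1}-m_q\|_{C_x}\lesssim \delta_{q+1}^{1/2}$ and higher derivative losses controlled by $\lambda_{q+1}$. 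The limit $m=\lim_q m_q$ then solves \eqref{equa-mns} exactly, and a careful accounting of the geometric series of the increments yields the regularity $m\in H^{\beta'}_t C_x\cap L^p_t C^s_x\cap C_t H^{-1}_x$, with $\beta'$ and the admissible $(p,s)$ dictated by the interpolation between the $C_x$ bound on the perturbation and the $\lambda_{q+1}$-loss in one spatial derivative; the constraint $\alpha+s-2\alpha/p<0$ is precisely what makes the hypo-viscous dissipation term lower order than the oscillation error at each stage.

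The heart of the argument --- and the genuinely new part flagged in the abstract --- is the design of the building blocks that respect the compressible structure. I would perturb \emph{both} unknowns: write $\rho_{q+1}=\rho_q + \theta_{q+1}$ and $m_{q+1}=m_q+w_{q+1}$, where $w_{q+1}$ is a sum of \emph{intermittent} (concentrated) velocity-type building blocks --- spatially rescaled, temporally oscillating Mikado/Beltrami-type flows supported on small tubes of width $\lambda_{q+1}^{-1}\mu^{-1}$ in $r$ directions --- modulated by amplitude functions chosen by the usual geometric lemma so that the low-frequency part of $w_{q+1}\otimes w_{q+1}/\rho_q$ cancels $\mathring{R}_q$. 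The intermittency parameter is tuned so that $\|w_{q+1}\|_{C_x}$ stays order $\delta_{q+1}^{1/2}$ while $\|w_{q+1}\|_{L^1_x}$ and higher Lebesgue norms are small, which is exactly the mechanism (as in \cite{bv19b,cl20.2}) that absorbs the fractional dissipation $\mu(-\Delta)^\alpha(\rho_q^{-1}w_{q+1})$ into the new Reynolds stress. Simultaneously, $\theta_{q+1}$ must be chosen so that the continuity equation $\p_t\rho_{q+1}+\div m_{q+1}=0$ is maintained \emph{exactly} at every step --- this forces $\p_t\theta_{q+1}=-\div(\text{principal part of }w_{q+1})$, so the density perturbation is essentially an antidivergence in time of the momentum perturbation; one then checks $\theta_{q+1}$ is small in $C^1_x$ (using that the principal part of $w_{q+1}$, being a sum of compactly-supported divergence-free-ish profiles times fast phases, has a good antidivergence) so that $\rho_{q+1}$ stays trapped in $[c_*/2, 2C_*]$ and converges in $C_tC^1_x$. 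Mass preservation (iii) is arranged by choosing $w_{q+1}$ with zero spatial mean (and $\theta_{q+1}$ likewise), which holds automatically for the building blocks; the temporal support localization (v) and the initial-data matching ($m_0\in H^{-1}_x$, small in $H^{-1}$) are obtained by multiplying all perturbations by smooth cutoffs in time supported in $N_{\va_*}([\wt T, T])$, starting the iteration from $(\rho_0,m_0)=(\wt\rho,\wt m)$ with $\mathring{R}_0$ small, and using the $C_tH^{-1}_x$ smallness of each high-frequency increment.

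After constructing the perturbations I would estimate the new errors. Decomposing the new Reynolds stress into the usual pieces --- oscillation error (from the interaction of high frequencies in $\div(\rho_q^{-1}w_{q+1}\otimes w_{q+1})$, handled by the inverse-divergence operator $\mathcal{R}$ gaining a factor $\lambda_{q+1}^{-1}$ against the amplitude), transport/Nash error (from $\p_t w_{q+1}$ and $\div(\rho_q^{-1}(w_{q+1}\otimes m_q + m_q\otimes w_{q+1}))$), linear/dissipative error (from $\mu(-\Delta)^\alpha(\rho_q^{-1}w_{q+1})$), a new \emph{pressure error} from $\nabla P(\rho_{q+1})-\nabla P(\rho_q)$ which is genuinely compressible and requires $\theta_{q+1}$ to be small enough that $P(\rho_{q+1})-P(\rho_q)$ can be absorbed (using $P\in C^2$ and $\rho_{q+1}$ bounded away from zero), and a ``compressibility/divergence'' error tracking the difference between the momentum structure $\rho^{-1}m\otimes m$ and its incompressible analog --- I would bound each by $\delta_{q+2}\lambda_{q+1}^{-\va}$ for a small $\va>0$, which closes the induction. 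The main obstacle, and where I would spend the most care, is the interplay between the two newly-perturbed unknowns: the density perturbation $\theta_{q+1}$ needed to preserve the continuity equation feeds back into the pressure error and into the amplitude $\rho_q^{-1}$ of the momentum building blocks, so one must check that this coupling does not force $\theta_{q+1}$ to be larger than the $C^1_x$-budget allows --- in particular, one needs the antidivergence-in-time of the \emph{principal} (highest-frequency) part of $w_{q+1}$ to be controlled \emph{without} losing a factor of $\lambda_{q+1}$, which requires choosing the temporal oscillation frequency of the building blocks carefully relative to $\lambda_{q+1}$ and the intermittency dimension, much as in the incompressible intermittent schemes but now with the extra scalar equation rigidly linking the two perturbations.
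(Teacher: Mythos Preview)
Your overall iterative architecture --- relaxed system with a symmetric stress error, super-exponential frequency ladder $\lambda_q$, perturbing both $\rho$ and $m$, closing in $L^1_tC_x$-type norms, summable increments giving the limit --- matches the paper. But the central design choice in your proposal is wrong, and it would prevent you from reaching the stated regularity $m\in L^p_tC^s_x$.

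You propose \emph{spatially} intermittent building blocks (``supported on small tubes of width $\lambda_{q+1}^{-1}\mu^{-1}$'') and then assert ``$\|w_{q+1}\|_{C_x}$ stays order $\delta_{q+1}^{1/2}$''. These two statements are incompatible: spatial concentration on thin tubes forces the $C_x$ norm to blow up like (tube volume)$^{-1/2}$ once the $L^2_x$ norm is fixed at $\delta_{q+1}^{1/2}$ for the cancellation of $R_q$. Since the target space carries a spatial $C_x$ (or $C^s_x$) norm, the increments would not be summable there, and the Reynolds error --- which the paper measures in $L^1_tC_x$, not $L^1_{t,x}$ --- would not decay either. The paper's resolution is the opposite of what you wrote: the Mikado flows are \emph{spatially homogeneous} (no concentration, $\|W_{(k)}\|_{C_x}\sim 1$), and all the intermittency is \emph{temporal}, via scalar profiles $g_{(k)}(t)$ concentrated on intervals of length $\sim\tau^{-1}$ with $\tau=\lambda_{q+1}^{2\alpha-10\varepsilon}$. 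This gives $\|g_{(k)}\|_{L^p_t}\sim \tau^{1/2-1/p}$, so $\|w_{q+1}\|_{L^p_tC_x}\sim \tau^{1/2-1/p}$ is small for $p<2$ while $\|w_{q+1}\|_{L^2_tC_x}\sim\delta_{q+1}^{1/2}$; the constraint \eqref{ps-supercri} is exactly what makes $\lambda^s\tau^{1/2-1/p}$ summable. The temporal supports of different $g_{(k)}$ are also shifted to be disjoint, which kills the cross-interactions of the (non-concentrated) Mikado flows and is essential in $d=2$.

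Two further points your sketch misses. First, you write the cancellation as ``low-frequency part of $\rho_q^{-1}w_{q+1}\otimes w_{q+1}$ cancels $\mathring R_q$'', implicitly treating the stress as traceless. In the compressible setting the trace cannot be pushed into the pressure, so the geometric lemma actually produces $\varrho\,\mathrm{Id}-R_\ell$ with a genuinely large $\varrho$; the paper makes $\varrho=\varrho(t)$ depend only on $\|R_\ell(t,\cdot)\|_{C_x}$ (not on $x$) so that $\div(\varrho\,\mathrm{Id})=0$ and the term disappears without touching $P(\rho)$. Your proposal does not address this. Second, you locate the source of the density perturbation in the divergence of the \emph{principal} part of $w_{q+1}$ and then worry about avoiding a $\lambda_{q+1}$-loss. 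In the paper the principal part plus an incompressible corrector is exactly divergence-free (via a skew-symmetric potential), and the only non-divergence-free piece is the low-frequency \emph{temporal} corrector $w_{q+1}^{(o)}\sim \sigma^{-1}\nabla(\rho_\ell^{-1}a_{(k)}^2)$; hence $z_{q+1}=-\int_0^t\div w_{q+1}^{(o)}$ is low-frequency in space and small in $C_tC^1_x$ by a factor $\sigma^{-1}$, with no $\lambda_{q+1}$-loss to fight. Your mechanism for keeping $\theta_{q+1}$ small in $C^1_x$ would not work as written.
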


Note that, for any $A\subseteq [0,T]$ and $\ve_*>0$,
$N_{\va_*}(A)$ denotes the $\ve_*$-neighborhood of $A$ in $[0,T]$, namely,
	\begin{align*}
		N_{\va_*}(A):=\{t\in [0,T]:\ \exists\,s\in A,\ s.t.\ |t-s|\leq \va_*\}.
	\end{align*}

Our proof of Theorem \ref{Thm-Nonuniq-hypoNSE} also
applies to the compressible Euler equations \eqref{equa-Euler}
and the hypo-viscous INS \eqref{equa-NSE-Incomp} below.
Hence, we also have the following non-uniqueness of weak solutions.

\begin{corollary} [Non-uniqueness for compressible Euler equations] \label{Cor-Nonuniq-Euler}
There exist $\rho_0\in L^\infty(\mathbb{T}^d)$ and $m_0\in L^2(\mathbb{T}^d)$,
such that
for any exponents $(p,s)$ satisfying \eqref{ps-supercri} with any $\alpha\in (0,1)$,
there exist infinitely many weak solutions
$(\rho, m)\in C_t C^1_x \times L^p_tC^s_x$ to
the compressible Euler equations \eqref{equa-Euler}
with the initial data $(\rho_0, m_0)$.
\end{corollary}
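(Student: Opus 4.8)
The plan is to obtain Corollary~\ref{Cor-Nonuniq-Euler} by running the convex integration scheme behind Theorem~\ref{Thm-Nonuniq-hypoNSE} with the viscosity switched off. Indeed, the compressible Euler system \eqref{equa-Euler} is exactly \eqref{equa-mns} with $\mu=\nu=0$, so its relaxed form is the Euler-Reynolds system $\p_t\rho_q+\div m_q=0$ and $\p_t m_q+\div(\rho_q^{-1}m_q\otimes m_q)+\nabla P(\rho_q)=\div\mathring{R}_q$, which is obtained from the Navier-Stokes-Reynolds system used in the proof of Theorem~\ref{Thm-Nonuniq-hypoNSE} simply by deleting the viscous term $\mu(-\Delta)^\alpha(\rho_q^{-1}m_q)-(\mu+\nu)\nabla\div(\rho_q^{-1}m_q)$. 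I would therefore keep the same iteration proposition, the same parameters, the same compressible building blocks for the density and the momentum, and the same definitions of the perturbations and of the new Reynolds stress; the only change is that $\mathring{R}_{q+1}$ no longer carries a dissipative error term.

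Next I would recheck the inductive estimates. The hypo-viscosity enters the closure of Theorem~\ref{Thm-Nonuniq-hypoNSE} only through the bound on the dissipative part of the new stress, of the schematic form $\mathcal{R}\big(\mu(-\Delta)^\alpha w_{q+1}\big)$, and it is precisely the demand that this term be much smaller than $\delta_{q+2}$ that forces the admissible range \eqref{ps-supercri}. With $\mu=\nu=0$ this term is absent, so the estimates for the oscillation, transport, Nash and pressure errors continue to hold, with room to spare. Hence, given $(p,s)$ for which \eqref{ps-supercri} holds for \emph{some} $\alpha\in(0,1)$, I would fix one such $\alpha$ (it now only serves to pin down the admissible parameter choices inherited from Theorem~\ref{Thm-Nonuniq-hypoNSE}), start the iteration from a suitable fixed smooth solution $(\wt\rho,\wt m)$ of the transport equation \eqref{equa-trans} bounded away from vacuum --- e.g.\ the one used in Theorem~\ref{Thm-Nonuniq-CNS} --- and pass to the limit. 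This produces $(\rho,m)\in C_tC^1_x\times L^p_tC^s_x$ solving \eqref{equa-Euler}, with the prescribed initial data $(\rho_0,m_0)$ and $\rho$ bounded away from vacuum. Infinitely many such solutions follow in the standard way: the iteration leaves a free parameter (for instance the prescribed size of the first perturbation, or an energy profile tracked along the iteration) that can be varied over an infinite set while keeping the initial data fixed, and distinct choices yield distinct weak solutions.

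The step I expect to require the most care is not an analytic one but a verification: one must make sure that none of the algebraic identities used to absorb the oscillation error and the pressure increment $\nabla(P(\rho_{q+1})-P(\rho_q))$ into $\mathring{R}_{q+1}$, nor the constructions of the compressible density and momentum building blocks, implicitly use $\mu,\nu>0$. Since the viscous term is linear in the momentum and is handled separately from the quadratic interaction and the pressure, this is routine; likewise the gluing and temporal cutoff step responsible for the $H^{\beta'}_t$ regularity and the temporal support in Theorem~\ref{Thm-Nonuniq-hypoNSE} is insensitive to the viscosity. A secondary remark is that, although removing the viscous error deletes the constraint that originally produced \eqref{ps-supercri}, the range of $(p,s)$ cannot be enlarged beyond $s<\tfrac{2}{p}-1$ for free, because the residual limitation comes from the intermittency of the building blocks and the oscillation error rather than from the (now absent) dissipation; this is why the statement is phrased via \eqref{ps-supercri} for some $\alpha\in(0,1)$.
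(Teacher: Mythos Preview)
Your proposal is correct and matches the paper's approach: the paper does not give a separate proof of Corollary~\ref{Cor-Nonuniq-Euler} but simply notes that the scheme of Theorem~\ref{Thm-Nonuniq-hypoNSE} applies verbatim with $\mu=\nu=0$, and then the non-uniqueness is extracted exactly as in the proof of Theorem~\ref{Thm-Nonuniq-CNS} (by varying the reference solution $\wt m_j=\tfrac{j}{c_0}\wt m$ and using the temporal-support control to pin the common initial data $(1,0)$). Two minor clean-ups: the compressible Reynolds stress here is the full symmetric $R_q$, not the trace-free $\mathring R_q$ (the latter is only used in the incompressible remarks); and the mechanism for producing infinitely many solutions should be stated as in Theorem~\ref{Thm-Nonuniq-CNS} rather than via an unspecified ``free parameter'', since the scheme does not track an energy profile.
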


\begin{corollary} [Non-uniqueness for hypo-viscous INS] \label{Cor-Nonuniq-NSE}
Consider the incompressible Navier-Stokes equations
\begin{equation}\label{equa-NSE-Incomp}
	\left\{\aligned
	&\p_t u+ \mu(-\Delta)^{\a}u +\div(u\otimes u) + \na P =0,  \\
	&  \div u = 0,
	\endaligned
	\right.
\end{equation}
where $\mu>0$, $\alpha \in (0,1)$.
Then, there exists $u_0\in L^2(\mathbb{T}^d)$
such that for any exponents $(p,s)$ satisfying \eqref{ps-supercri},
there exist infinitely many weak solutions
$u \in L^p_tC^s_x$
with the initial datum $u_0$.
\end{corollary}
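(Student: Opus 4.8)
The plan is to obtain Corollary~\ref{Cor-Nonuniq-NSE} as a specialization of the convex integration scheme behind Theorem~\ref{Thm-Nonuniq-hypoNSE}, with the density frozen to a constant. Take $\wt\rho\equiv 1$ together with a smooth divergence-free $\wt m=\wt u$; then the transport equation \eqref{equa-trans} holds, $\nabla P(\wt\rho)\equiv 0$ and $\nabla\div(\wt\rho^{-1}\wt m)\equiv 0$, so the momentum equation in \eqref{equa-mns} collapses to the hypo-viscous INS \eqref{equa-NSE-Incomp}. One then runs the same iteration while keeping $\rho_q\equiv 1$ at every step: no density building block is needed, since the momentum perturbations are taken divergence-free and hence the continuity equation is automatically satisfied. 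The working system is the Reynolds-relaxed equation $\partial_t u+\mu(-\Delta)^\alpha u+\P\,\div(u\otimes u)=\P\,\div(\mathring R)$, $\div u=0$, with $\mathring R$ symmetric and trace-free, and the inductive proposition is unchanged in form: from $(u_q,\mathring R_q)$ oscillating at frequency $\lambda_q$ with $\|\mathring R_q\|_{L^1_{t,x}}\lesssim\delta_{q+1}$, one builds $(u_{q+1},\mathring R_{q+1})$ with $\|u_{q+1}-u_q\|_{L^2}\lesssim\delta_{q+1}^{1/2}$, $\|\mathring R_{q+1}\|_{L^1_{t,x}}\lesssim\delta_{q+2}$, together with the $H^{\beta'}_tC_x\cap L^p_tC^s_x\cap C_tH^{-1}_x$ bounds needed to pass to the limit $q\to\infty$; then $\mathring R_\infty=0$ and $u:=u_\infty$ is a weak solution of \eqref{equa-NSE-Incomp}.

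The perturbation $w_{q+1}:=u_{q+1}-u_q$ is assembled from the same spatially intermittent, divergence-free building blocks used for the momentum in the compressible construction (the density building block simply being discarded), together with the usual divergence corrector and temporal cutoffs. Its amplitudes are chosen so that the low-frequency part of $\div(w_{q+1}\otimes w_{q+1})$ cancels $\mathring R_q$ through the standard identities for $\fint_{\T^d}\phi^2$ and $\P_{\neq 0}$, while the oscillation, transport and Nash errors, and the new dissipative error $\mu(-\Delta)^\alpha w_{q+1}$, are placed into $\mathring R_{q+1}$ via the antidivergence operator. Since $\alpha<1$, the dissipative error, after the antidivergence, carries a strictly negative power of $\lambda_{q+1}$ relative to the leading transport error; this gain is precisely what allows the scheme to close for all hypo-viscosities and is the reason $\alpha$ is restricted to $(0,1)$. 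The intermittency dimension of the building blocks and the super-exponentially growing frequency and amplitude parameters $\lambda_q$, $\delta_q$ are then tuned, exactly as in Theorem~\ref{Thm-Nonuniq-hypoNSE}, so that $\sum_q\|w_{q+1}\|_{L^p_tC^s_x}<\infty$ precisely under the supercriticality condition $\alpha+s-\frac{2\alpha}{p}<0$ of \eqref{ps-supercri}, which for $\alpha=1$ and $(p,s)=(2,0)$ is exactly the Lady\v{z}enskaja--Prodi--Serrin endpoint $L^2_tC_x$.

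For the initial-data and non-uniqueness statements, one starts the iteration from a smooth divergence-free $u_0$ (one may take $u_0\equiv 0$) that vanishes on $[0,\wt T]$ for some $\wt T>0$, and invokes the temporal-support localization of item (v) of Theorem~\ref{Thm-Nonuniq-hypoNSE} to force every perturbation $w_{q+1}$ to be supported in $t\in N_{\va_*}([\wt T,T])$; thus all the solutions produced share the datum $u(0,\cdot)=u_0\in L^2(\T^d)$. Infinitely many pairwise distinct solutions with this datum are obtained in the standard way, for instance by inserting a free amplitude parameter into the first perturbation $w_1$, or by steering the scheme toward distinct smooth targets on $[\wt T,T]$. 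The only point requiring care beyond quoting Theorem~\ref{Thm-Nonuniq-hypoNSE} is to verify, term by term, that freezing $\rho\equiv 1$ neutralizes every compressible-specific contribution --- the pressure gradient $\nabla P(\rho)$, the term $\nabla\div u$, and the continuity equation, which now holds automatically --- so that the density channel, which in the compressible scheme absorbs part of the error, is genuinely dispensable and no estimate established there is disturbed, while inserting the Leray projection $\P$ is harmless because the building blocks are already divergence-free. This reduction --- confirming that the incompressible scheme really is the easier sub-case and that the dissipative error stays subleading throughout $\alpha\in(0,1)$ --- is where the (essentially routine) work lies.
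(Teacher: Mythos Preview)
Your overall strategy is correct and matches the paper's: freeze $\rho\equiv 1$, drop the density perturbation, keep the velocity perturbation divergence-free, and rerun the iteration of Theorem~\ref{Thm-Iterat}, then conclude non-uniqueness exactly as in the proof of Theorem~\ref{Thm-Nonuniq-CNS} by starting from targets $\wt u_j$ supported away from $t=0$. This is precisely what the paper does, cf.\ Remarks~\ref{incom-3.7}, \ref{rem-3.10}, \ref{rq1-incom}, \ref{rem-supp-incom}, \ref{Rem-proof-INS}.

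Two descriptive inaccuracies are worth flagging, though they do not break the argument. First, you call the building blocks ``spatially intermittent''. They are not: the Mikado flows $W_{(k)}$ here carry no concentration parameter, and the entire intermittency is \emph{temporal}, via $g_{(k)}$ with $\tau=\lambda_{q+1}^{2\alpha-10\varepsilon}$. This is the mechanism that makes the viscosity term small in $L^1_tC_x$ (not merely $L^1_{t,x}$), and it is the reason the Reynolds norm throughout is $L^1_tC_x$ rather than $L^1_{t,x}$. Your sentence ``the dissipative error\dots carries a strictly negative power of $\lambda_{q+1}$'' is morally right, but the gain comes from the factor $\tau^{-1/2}$ in $\|w_{q+1}\|_{L^1_tC_x}$, not from spatial concentration.

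Second, ``inserting the Leray projection is harmless because the building blocks are already divergence-free'' skips a genuine modification. In the compressible scheme the temporal corrector $w_{q+1}^{(o)}$ of \eqref{wo} is \emph{not} divergence-free; that is exactly what forces the density perturbation $z_{q+1}$. For the incompressible corollary one must redefine it as in \eqref{wo-1}, applying $\mathbb{P}_H\mathbb{P}_{\neq 0}$, so that $\div w_{q+1}=0$ holds and the pressure absorbs the resulting gradient term via \eqref{utemcom-2}. This is the one place where the reduction is not literally ``set $\rho=1$''.
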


Because the proof of Corollary \ref{Cor-Nonuniq-NSE} basically follows
the lines of the proofs of Theorems \ref{Thm-Nonuniq-CNS} and \ref{Thm-Nonuniq-hypoNSE},
we will mainly shows the differences,
such as the construction of building blocks and the choice of Reynolds stress,
in the remarks in the convex integration scheme.
See Remarks \ref{incom-3.7}, \ref{rem-3.10}, \ref{rq1-incom},
\ref{rem-supp-incom} and \ref{Rem-proof-INS} below.

Furthermore,
we consider the strong vanishing viscosity limit,
which relates the hypo-viscous CNS and compressible Euler equations.

\begin{theorem} [Strong vanishing viscosity limit for hypo-viscous CNS]   \label{Thm-hypoNSE-Euler-limit}
	Let $\alpha \in (0,1)$
	and $\rho, m\in C^{\wt \beta}_{t,x}$, $\wt \beta >0$,
    be any weak solution
	to the compressible Euler equations \eqref{equa-Euler},
    such that $c_1\leq \rho \leq c_2$ for some constants $c_1, c_2>0$.
	Then, there exist $\beta' \in (0, \wt \beta)$ and a sequence of weak solutions
	$(\rho^{({n})},m^{({n})})\in C_{t,x}\times H^{\beta'}_{t}C_x $
	to the hypo-viscous CNS \eqref{equa-NS} with viscous coefficients $\kappa_n\mu$ and $\kappa_n\nu$,
	such that
	as $\kappa_n\rightarrow 0$,
	\begin{align}\label{convergence}
	\rho^{( {n})}\rightarrow \rho \quad\text{strongly in}\ C_{t,x},
    \quad \text{and}\quad	m^{( {n})}\rightarrow m \quad\text{strongly in}\  H^{\beta'}_{t}C_x.
	\end{align}
\end{theorem}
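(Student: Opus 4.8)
The plan is to run the convex integration scheme underlying Theorem~\ref{Thm-Nonuniq-hypoNSE} once more, but this time \emph{anchored} at the prescribed H\"older solution $(\rho,m)$ of the compressible Euler equations, and with the viscosity coefficient itself treated as a small parameter that is sent to zero along the iteration. Concretely, for each $n$ I would fix viscosity coefficients $\kappa_n\mu$, $\kappa_n\nu$ with $\kappa_n\to 0$ and construct a solution $(\rho^{(n)},m^{(n)})$ to the hypo-viscous CNS \eqref{equa-NS} by building a sequence $(\rho_q^{(n)},m_q^{(n)},\mathring R_q^{(n)})$ solving the associated Euler--Reynolds-type system, starting the iteration not from zero but from a mollification of $(\rho,m)$. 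Since $(\rho,m)$ already solves the compressible Euler equations, the only sources in the initial Reynolds stress $\mathring R_0^{(n)}$ are (a) the mollification error, which is $O(\ell^{\wt\beta})$ in the relevant norms by the H\"older regularity $C^{\wt\beta}_{t,x}$, and (b) the newly added hypo-viscous term $\kappa_n\mu(-\Delta)^\alpha(\rho^{-1}m)$ together with the $\kappa_n$-dilational term, which is $O(\kappa_n)$ times a fixed norm of the mollified data. Choosing the mollification length $\ell=\ell(n)$ so that $\ell^{\wt\beta}\lesssim\kappa_n$, the whole of $\mathring R_0^{(n)}$ is $O(\kappa_n)$.

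The key point is then a \emph{quantitative} version of the iteration: each convex integration step adds a perturbation whose size in $C_{t,x}$ (for the density) and in $H^{\beta'}_tC_x$ (for the momentum) is controlled by a fixed power of $\|\mathring R_q^{(n)}\|$, and the Reynolds stresses decay geometrically, $\|\mathring R_{q+1}^{(n)}\|\leq \tfrac12\|\mathring R_q^{(n)}\|$, independently of $\kappa_n$ once the frequency parameters $\lambda_q$ are chosen large enough depending on $\kappa_n$ (the hypo-viscous term costs $\lambda^{2\alpha}$, which is beaten by the intermittent gain exactly as in the proof of Theorem~\ref{Thm-Nonuniq-hypoNSE}, since $\alpha<1$; the extra factor $\kappa_n\le 1$ only helps). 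Summing the geometric series, the total correction satisfies
\begin{equation*}
\|\rho^{(n)}-\rho_\ell\|_{C_{t,x}}+\|m^{(n)}-m_\ell\|_{H^{\beta'}_tC_x}\;\lesssim\;\|\mathring R_0^{(n)}\|^{\theta}\;\lesssim\;\kappa_n^{\theta}
\end{equation*}
for some $\theta>0$, where $(\rho_\ell,m_\ell)$ is the mollification of $(\rho,m)$. Combining with $\|\rho-\rho_\ell\|_{C_{t,x}}+\|m-m_\ell\|_{C_{t,x}}\lesssim\ell^{\wt\beta}\lesssim\kappa_n$ and the obvious embedding into the weaker space in which we measure convergence, the triangle inequality gives \eqref{convergence}. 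The regularity statement $\rho^{(n)}\in C_{t,x}$, $m^{(n)}\in H^{\beta'}_tC_x$ is inherited verbatim from the output regularity in Theorem~\ref{Thm-Nonuniq-hypoNSE}, with the same $\beta'$, shrunk if necessary so that $\beta'<\wt\beta$.

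The main obstacle I anticipate is \emph{uniformity of the scheme in $\kappa_n$}: one must check that the hypo-viscous term, which enters the $q$-th level estimate with a prefactor $\kappa_n\lambda_q^{2\alpha}\delta_q^{1/2}$, can genuinely be absorbed by the gain coming from the spatial intermittency and the oscillation $\lambda_{q+1}$ at \emph{every} level simultaneously, while keeping the per-step perturbation bounds and the geometric decay of $\mathring R_q^{(n)}$ with constants not depending on $n$. This is really the same computation that makes Theorem~\ref{Thm-Nonuniq-hypoNSE} work for all $\alpha\in(0,1)$ — the point $\alpha<1$ is what leaves room — but here it must be tracked with the explicit $\kappa_n$-dependence so that $\lambda_q=\lambda_q(n)$ can be chosen (growing in $n$) without spoiling the estimates. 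A secondary, more bookkeeping, issue is to ensure the \emph{initial data} is handled correctly: the construction must leave the (mollified) data essentially unchanged near $t=0$, exactly as in item (v) of Theorem~\ref{Thm-Nonuniq-hypoNSE}, so that each $(\rho^{(n)},m^{(n)})$ is a genuine weak solution of the Cauchy problem and the limit is consistent; and the lower bound $c_1\le\rho\le c_2$ must be propagated, which follows since the density corrections are $O(\kappa_n^\theta)$ and $\rho^{(n)}$ stays in a fixed compact subinterval of $(0,\infty)$ for $n$ large.
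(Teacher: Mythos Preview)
Your proposal is essentially correct and follows the same strategy as the paper: mollify the given H\"older Euler solution $(\rho,m)$, observe that the resulting Reynolds stress is small (from both the mollification error of order $\ell^{\wt\beta}$ and the newly inserted viscous terms of order $\kappa_n$), and then run the convex integration scheme of Theorem~\ref{Thm-Iterat} so that the total correction is small. The paper carries this out with one clean organizational device you should adopt: rather than introducing $n$-dependent frequency parameters and tracking a ``quantitative'' dependence of the perturbation on $\|R_0^{(n)}\|$, the paper fixes the standard parameters $\lambda_q=a^{b^q}$, $\delta_q$ once and for all, chooses $\kappa_n=\lambda_n^{-2}$ and mollification scale $\lambda_n^{-1}$, and then verifies that the mollified pair $(\rho_n,m_n)$ already satisfies the inductive estimates \eqref{rhobd}--\eqref{rl1} \emph{at level $q=n+1$}. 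Theorem~\ref{Thm-Iterat} then applies verbatim for $q\ge n+1$, and the total correction is $\sum_{q\ge n+1}\delta_{q+2}^{1/2}\to 0$. This avoids your need to re-derive bounds of the form ``perturbation $\lesssim\|R_q\|^\theta$'', which in fact does not quite hold as stated: the amplitude $\varrho$ in \eqref{def-varrho} has a floor $\sim\delta_{q+1}$ even when $R_\ell=0$, so the perturbation size is governed by the a priori parameter $\delta_{q+1}$, not by $\|R_q\|$ itself.

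Two minor points. First, your concern about matching initial data is unnecessary: Theorem~\ref{Thm-hypoNSE-Euler-limit} asserts only the existence of \emph{some} sequence of weak solutions converging to $(\rho,m)$, with no Cauchy-problem constraint, so item~(v) of Theorem~\ref{Thm-Nonuniq-hypoNSE} plays no role here. Second, your heuristic ``$\|\mathring R_{q+1}^{(n)}\|\le\tfrac12\|\mathring R_q^{(n)}\|$'' understates the actual decay, which is super-exponential ($\delta_{q+2}$ vs.\ $\delta_{q+1}$); this does not affect correctness but is worth stating accurately.
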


\paragraph{\bf Comments on main results}
Let us present some comments on the main results below.
\medskip

$(i)$ {\bf Non-uniqueness of weak solutions for hypo-viscous CNS.}
To the best of our knowledge,
Theorem \ref{Thm-Nonuniq-CNS} provides the first result
for the non-uniqueness of weak solutions
to the compressible fluid with viscosity.

It also shows that,
the hypo-vsicosity cannot rule out the non-uniqueness mechanism,
which, actually, coincides
with the Lady\v{z}enskaja-Prodi-Serrin criteria
(see also Comment $(ii)$ below).

Another interesting outcome is concerned with the anomalous dissipation
for Euler equations.
Differently from the incompressible Euler equations,
for which the flexible part of the Onsager conjecture
in the 3D case has been proved recently in \cite{I18,bdsv19}
by using the convex integration method
(see also \cite{BMNV21}  and \cite{NV22} for anomalous dissipation in the spaces $C_tH^{\frac 12 -}$ and $C_tB^{s}_{3,\infty}$,
respectively),
the anomalous dissipation for the compressible Euler equations
is well known for low regular entropy solutions with shock even in the 1D case.
Meanwhile, by the result in \cite{BGSTW19}, the energy is conserved for $C^{\frac{1}{3}+}$ weak solutions,
while by the result in \cite{GK22}, the energy can be dissipated for $C^{\frac{1}{7}-}$ entropy weak solutions.
Thus, the compressible Onsager conjecture, namely, the threshold for the regularity of anomalous dissipation,
is still open.

Corollary \ref{Cor-Nonuniq-Euler} provides new examples
of anomalous dissipation in the class $L^1_tC^{1-}_x$
for the compressible Euler equations,
by taking $p=1$ and $\alpha$ close to $1$.
We note that a similar result for the incompressible Euler equations
was provided in  \cite{cl20.2} with a different convex integration scheme.

It is worth noting that
our proof also applies to the non-uniqueness problem of stochastic
hypo-viscous CNS driven by additive noise.
This will be done in a forthcoming work.
\medskip

$(ii)$ {\bf Sharp viscosity threshold for $L^2_tC_x$ well-posedness.}
It is usually believed that,
high dissipation helps to
establish the well-posedness of PDEs,
yet ill-posedness may exhibit in the low dissipative case.
Hence, heuristically,
there should be a threshold of the dissipation for the well-posedness.

For the 3D hyper-viscous INS,
on one hand,
it is well-posed in $C_tL^2_x$
if the viscosity exponent $\alpha\geq 5/4$,
due to Lions \cite{lions69}.
On the other hand,
the non-uniqueness exhibits
in the low viscous regime where $1\leq \alpha<5/4$,
due to Luo-Titi \cite{lt20}
and Buckmaster-Colombo-Vicol \cite{bcv21}.
Thus,
$\alpha=5/4$ (known as the Lions exponent)
is the threshold viscosity for the $C_tL^2_x$ well-posedness
for the 3D hyper-viscous INS.

Another well-known criteria of well-posedness
is the Lady\v{z}enskaja-Prodi-Serrin condition,
which relates closely to the scaling invariance of equations.
It is known that the INS
is well-posed in the space $L^2_tL^\infty_x$,
which is actually the critical space in view of the  Lady\v{z}enskaja-Prodi-Serrin criteria.
Yet, there exist infinitely many weak solutions
in the supercritical spaces $L^p_tL^\infty_x$ for any $1\leq p<2$,
due to Cheskidov-Luo \cite{cl20.2}.
See also \cite{lqzz22} for the non-uniqueness for hyperdissipative NSE
near two endpoints of Lady\v{z}enskaja-Prodi-Serrin condition.

When $(p,s)=(2,0)$,
Corollary \ref{Cor-Nonuniq-NSE} provides the non-uniqueness of weak solutions
in the space $L^2_tC_x$
for the hypo-viscous INS,
which is sharp due to the above well-posedness in $L^2_tL^\infty_x$,
in view of the Lady\v{z}enskaja-Prodi-Serrin criteria.

Thus,
Corollary \ref{Cor-Nonuniq-NSE} reveals that
$\alpha=1$ is the sharp threshold viscosity
for the $L^2_tC_x$ well-posedness for the INS.
It would serve as a counterpart
of the recent results in \cite{lt20,bcv21}
regarding the sharp threshold of the Lions exponent $\alpha=5/4$
for the $C_tL^2_x$ well-posedness of the 3D INS.
It is worth noting that, both the spaces $L^2_tC_x$ and $C_tL^2_x$
are the endpoint spaces of the  Lady\v{z}enskaja-Prodi-Serrin criteria.

This also complements the very recent work \cite{AC22},
where the non-uniqueness of Leray-Hopf solutions is proved
for the 2-D hypo-viscous INS with a force.

We would expect the non-uniqueness in the case $(p,s)=(2,0)$
in Theorem \ref{Thm-Nonuniq-CNS} is also sharp for the hypo-viscous CNS,
say, at least if the density is of small variation.
To our best knowledge,
the Lady\v{z}enskaja-Prodi-Serrin criteria for the uniqueness of general weak solutions
to CNS remains open,
yet the Serrin blow-up criterion
for the 3D CNS is known (see, e.g., \cite{HLX11}).
\medskip

$(iii)$ {\bf Vanishing viscosity limit for hypo-viscous CNS.}
In the breakthrough work \cite{bv19b},
Buckmaster-Vicol proved the vanishing viscosity limits for the 3D INS,
based on the intermittent convex integration scheme.
The case of hyper-viscous INS was proved by the authors \cite{lqzz22}.
See also \cite{lzz21,lzz21.2} for the case of viscous and resistive MHD equations.

Regarding the compressible case,
the 1D case with artificial viscosity was solved by Bianchini-Bressan \cite{BB05}.
But it is still open for the multi-dimensional case with general initial data.
By virtue of Theorem \ref{Thm-hypoNSE-Euler-limit},
we see that in the vanishing viscosity limit,
the set of accumulation points
of weak solutions to the hypo-viscous CNS \eqref{equa-mns}
contains all the H\"older continuous weak solutions to the compressible Euler equations \eqref{equa-Euler}.

In particular,
the entropy solutions in the H\"older spaces recently constructed in \cite{GK22}
may be obtained as the strong vanishing
viscosity limits of weak solutions
to the hypo-viscous CNS \eqref{equa-NS} when $d=3$.

\subsection{Distinctions between compressible and incompressible cases}   \label{Subsec-Distin}
Let us mention the distinctions between the compressible and incompressible fluids with viscosity,
from the aspects of the pressure and density,
the Reynolds stress,
as well as the building blocks.
These three aspects are actually not isolated,
but related closely to each other.
\medskip

$(i)$ {\bf The pressure and density.}
One obvious difference is that,
the pressure law in the compressible case depends on the density,
which propagates along the transport equation.
In the convex integration scheme,
this reflects the relative rigidity for the choice of the pressure in the compressible case,
while it is rather flexible in the incompressible case.

In fact, a widely used technique in the convex integration scheme
for the incompressible systems is,
that one can take the advantage of the pressure term to absorb large error terms
as long as they are in the gradient form.
However,  for the compressible systems,
the variation of the pressure is tied to that of the density,
which, via the transport equation, also changes the momentum and
may give rise to rather complicated calculations.

In this paper, we do face this kind of difficulties.
For instance,
due to the temporal intermittency,
the temporal corrector is needed to balance the high temporal oscillation error (see \eqref{wo} below).
The application of the Leray projection,
as in the incompressible systems (cf. \cite{cl20.2}, and also  \cite{cl21.2,cl21,lqzz22,lzz21.2,lzz21}),
would leave the main part of the error as a gradient term to be absorbed in the pressure,
which now has to be identified in the compressible case
and would lead to complicated calculations.

Instead, we cancel the main part of the high temporal oscillation error directly,
but then this temporal corrector is not divergence-free,
it thus leads to the variation of the density in the iteration scheme,
which in turn requires the construction of suitable density perturbation.
This phenomenon is also quite different from the existing schemes
for compressible Euler equations in \cite{dls10,GK22},
where the density function is a priori given and does not change through the whole iteration procedure.

It is also worth noting that,
due to the intermittency used to control the large errors caused by the viscosity,
in most of the convex integration schemes for the INS,
the pressure is not bounded from below
and hence is not allowed for the compressible systems.
\medskip

$(ii)$  {\bf The Reynolds stress.}
Another major technical obstruction when treating the compressible fluid with viscosity
can be seen from the Reynolds stress in the convex integration scheme.

As a matter of fact,
one of the basic ideas
in the convex integration method
is to decrease the Reynolds error
by the effect of the quadratic nonlinearity of appropriate perturbations,
namely,
\[
	\int_{\mathbb{T}^d} w_{q+1} \otimes w_{q+1} d x \approx - R_q,
\]
where $R_q$ is the Reynolds stress at the previous level $q$,
and $w_{q+1}$ is the perturbation at level $q+1$.
It requires that the main part of $R_q$ should be at least semi-negative definite.
This property was carefully kept in each step in the differential inclusion schemes (see, e.g., \cite{dls09}),
however, is hard to be preserved in the intermittent schemes as in \cite{bv19b,cl21}.
Instead, in the Nash style schemes,
the Reynolds error cancelled by the low-frequency of nonlinearity
is taken of the form $ \varrho \Id - R_q$,
rather than $R_q$ itself,
namely,
\begin{align} \label{w2-rho-Rq}
\int_{\mathbb{T}^d} w_{q+1} \otimes w_{q+1} d x \approx \varrho \Id - R_q.
\end{align}
The extra function $\varrho$ is usually chosen as $| R_q(t,x)|$ for the systems with viscosity,
see, e.g., \cite{lt20,bcv21,cl21}.
It causes no trouble in the incompressible case
because,
as mentioned above, the resulting term $\div \varrho \Id = \nabla \varrho $
can be absorbed into the pressure,
which can be removed simply by using the Leray projection.

However, this strategy fails in the compressible case
because of the relative rigidity of the pressure term.
The analysis of Reynolds stress is rather involving in
the compressible case, see, e.g., the expression \eqref{ru} below.
In order to preserve the above convex integration scheme,
namely, to preserve the semi-positivity of $\varrho \Id - R_q$
and the identity $\div \varrho \Id = 0$ simultaneously,
we are led to choose $\varrho $ in terms of $\| R_q(t,\cdot) \|_{C_{x}}$.
This in turn requires the construction of appropriate  building blocks.
\medskip

$(iii)$ {\bf The building blocks.}
Unlike the previous intermittent schemes,
the building blocks in our proof feature {\it spatial homogeneity} but {\it temporal intermittency}.

On one hand,
we need to control the $L^1_tC_x$-norm of
Reynolds error,
for which the spatial intermittency would cause even larger errors.
This suggests that the suitable building blocks should be spatially homogeneous.

On the other hand,
in order to capture the viscosity,
an extra intermittency should be exploited from the building blocks.
Inspired by the spatial-temporal intermittent scheme raised by Cheskidov and Luo in \cite{cl21}
(see also \cite{cl20.2,cl21.2} and \cite{lzz21,lzz21.2,lqzz22} for further developments),
we make use of the temporal intermittency.

The temporal intermittency is, actually, exploited in the full strength here,
in order to capture the hypo-viscosity $(-\Delta)^\alpha$ for all $\alpha\in (0,1)$.
This in turn requires a careful choice of three parameters $(\lbb, \tau, \sigma)$,
where $\lbb$ denotes the frequency of building blocks,
$\tau$ and $\sigma$ parameterize the temporal concentration
and oscillation effects, respectively.
See \eqref{larsrp} for the precise choice of parameters.
It turns out that,
the corresponding building blocks can provide almost
2D intermittency for $\alpha$ close to one,
i.e., close to the case of CNS.

Another feature of our building blocks is the
{disjointness} of the temporal supports
of different building blocks,
which in particular permits to treat the 2D case.
Actually,
it is known that, the periodic Mikado flows along different directions
interact with each other in the 2D case.
This kind of interactions would give rise to large errors,
and is one of the main reasons that the flexible part of Onsager's conjecture is still open
for the 2D incompressible Euler equations.
Moreover,
because
the Mikado flows used as the present spatial building blocks
are not concentrated,
the proof in the current compressible situation
is different from that of \cite{cl20.2} in the incompressible setting.
Rather than the spatial shifts,
we use the {\it temporal shifts} to decouple different interactions,
which seems new in the existing literature of temporal-intermittent method
and might be of use for further study of fluid models.
\medskip

\paragraph{\bf Organization}
The remaining of the paper is organized as follows.
In \S \ref{Sec-Main-Iter} we present the main iteration estimates,
which are the key in the proof of Theorem \ref{Thm-Nonuniq-hypoNSE}.
The mollification procedure is also performed in order to
avoid the loss of derivative.
Then, \S \ref{Sec-Perturb} is devoted to the construction of the
key building blocks, and the perturbations of the density and momentum.
A set of algebraic identities and analytic estimates are also obtained.
In \S \ref{Sec-Rey-Endpt1} we mainly treat the Reynolds stress
and verify the corresponding iterative estimates.
Finally, the main results are proved in  \S \ref{Sec-Proof-Main}.
\medskip

\paragraph{\bf Notations}  We denote for $d\geq 2$, $p\in [1,\infty]$, $s\in \R$, $N\in \mathbb{N}$ and $\eta\in(0,1)$,
\begin{align*}
	L^p_t:=L^p(0,T),\quad L^p_x:=L^p(\T^d),\quad C^N_x:=C^N(\T^d),\quad C^{N,\eta}_x:=C^{N,\eta}(\T^d)\quad  W^{s,p}_x:=W^{s,p}(\T^d),
\end{align*}
where $W^{s,p}_x$ is the usual Sobolev space,
$C^{N,\eta}_x$ is the H\"older space equipped with the norm
$$
\norm{u}_{C_{x}^{N,\eta}}:=\sum_{0\leq|\zeta|\leq N}
\norm{\na^{\zeta} u}_{C_{x}}+ \max_{|\zeta|=N}\sup_{x\neq y\in \mathbb{T}^d}
\frac{|\na^\zeta u(x)- \na^\zeta u(y) | }{ |x-y|^{\eta}},
$$
and $\zeta=(\zeta_1,\zeta_2,\dots\zeta_d)$ is the multi-index
and $\na^\zeta:= \partial_{x_1}^{\zeta_1} \partial_{x_2}^{\zeta_2}\cdots\partial_{x_d}^{\zeta_d}$.
When $N=0$, we denote $C^{\eta}_x:=C^{0,\eta}(\T^d)$ for brevity. We also use the shorthand notation $L^\gamma_tL^p_x$ to denote $L^\gamma(0,T;L^p(\T^d))$, where $p, \gamma\in [1,\infty]$. In particular, we write $L^p_{t,x}:= L^p_tL^p_x$ for short.
Moreover, let
\begin{align*}
	\norm{u}_{W^{N,p}_{t,x}}:=\sum_{0\leq m+|\zeta|\leq N} \norm{\p_t^m \na^{\zeta} u}_{L^p_{t,x}}, \ \
	\norm{u}_{C_{t,x}^N}:=\sum_{0\leq m+|\zeta|\leq N}
	\norm{\p_t^m \na^{\zeta} u}_{C_{t,x}},
\end{align*}
Given any Banach space $X$, $C([0,T];X)$ denotes the space of continuous functions from $[0,T]$ to $X$,
equipped with the norm $\|u\|_{C_tX}:=\sup_{t\in [0,T]}\|u(t)\|_X$.

We would also write $a\lesssim b$ to imply that $a\leq C b$ for some constant $C>0$.

\section{Main iteration and mollification procedure} \label{Sec-Main-Iter}

This section contains the main iteration of the density, momentum
and Reynolds stress,
which is the heart of the proof of main results.
\medskip

\paragraph{\bf Main iteration}
Let us consider the approximate solutions to the following
hypo-viscous compressible Navier-Stokes-Reynolds system
\begin{equation}\label{equa-nsr}
	\left\{\aligned
	& \p_t \rho_q +\div m_q = 0,\\
	&\p_t m_q+\mu(-\Delta)^{\a}(\rho_q^{-1}m_q)-(\mu+\nu)\nabla \div (\rho_q^{-1}m_q) +\div(\rho_q^{-1}\m\otimes \m) + \nabla P(\rho_q)=\div \rrq.
	\endaligned
	\right.
\end{equation}
where $q\geq 0$, $R_q$ is the Reynolds stress which is a symmetric $3\times 3$ matrix.

Two important quantities measuring the size of relaxed solutions
$(\rq, \m, \rrq)$, $q\in \mathbb{N}$,
are the frequency parameter $\lbb_q$ and the amplitude parameter $\delta_{q}$,
chosen in the following way:
\begin{equation}\label{la}
	\la=a^{(b^q)}, \ \
	\delta_{q}=\lambda_{1}^{3\beta}\lambda_{q}^{-2\beta},
\end{equation}
where $a\in \mathbb{N}$ is a sufficiently large integer,
$\beta>0$ is the regularity parameter,
$b\in 2\mathbb{N}$ is a large integer of multiple $2$ such that
\begin{align} \label{b-beta-ve}
	b>\frac{1000}{\varepsilon}, \ \
	0<\beta<\frac{1}{100b^2}
\end{align}
with $\varepsilon\in \mathbb{Q}_+$ sufficiently small such that
\begin{equation}\label{e3.1}
	\varepsilon\leq\frac{1}{20}\min\{1-\alpha,\a, \frac{2\a}{p}-\a-s\}\quad \text{and}\quad b\ve\in\mathbb{N}.
\end{equation}

The heart of proof is to construct suitable relaxed solutions
$(\rq, \m, \rrq)$, $q\in \mathbb{N}$,
such that the Reynolds stress gets smaller
while the density and momentum converge in certain appropriate spaces,
as $q$ goes to infinity.

The quantitative description of this objective will be stated in the main iteration in Theorem \ref{Thm-Iterat} below,
which in part includes the following crucial inductive estimates
at level $q\in \mathbb{N}$:
\begin{align}
	& C_1+\la^{-\beta}\leq \rq\leq  C_2-\la^{-\beta},\label{rhobd}\\
	& \|\p_t^M\rq\|_{C_tC^N_{x}} \lesssim  \lambda_{q}^{\frac{N\ve}{4}}, \label{rhoc1} \\
    & \|\m\|_{C^N_{t,x}} \lesssim  \lambda_{q}^{2N+2}, \label{mc1} \\
	& \|\rrq\|_{C^1_{t,x}} \lesssim   \lambda_{q}^{9},\label{rc1}	\\
    & \|\rrq\|_{L^{1}_{t}C_x} \lesssim \dqq, \label{rl1}
\end{align}
where $1\leq N\leq 4$, $0\leq M\leq 1$, $C_1$ and $C_2$ are universal constants
such that $0<C_1<{c_*}/{2},\ C_2>2C_*$, and the implicit constants are independent of $q$. Concerning the temporal support of relaxed solutions, we denote
\begin{align}\label{def-tq}
T_q:=\inf_{t\in [0,T]}\{t\, | \nabla\rho_{q}(t,\cdot)\neq0,\,  m_{q}(t,\cdot)\neq0,\,  R_{q}(t,\cdot)\neq0  \},
\end{align}
for all $q\in \mathbb{N}$.

\begin{theorem} [Main iteration]\label{Thm-Iterat}
Let $\alpha \in (0,1)$ and $(p,s)$ satisfy \eqref{ps-supercri}.
Then, there exist $\beta\in (0,1)$ and $a_0=a_0(\beta, M^*)$ large enough,
 such that for any integer $a\geq a_0$,
the following holds:

Suppose that
$(\rq,\m, \rrq)$ is a relaxation solution to \eqref{equa-nsr} satisfying \eqref{rhobd}-\eqref{rl1}
at level $q\in \mathbb{N}$.
Then, there exists a new relaxation solution $(\rqq,\mq, {R}_{q+1} )$
to \eqref{equa-nsr} at level $q+1$,
which satisfies \eqref{rhobd}-\eqref{rl1} with $q+1$ replacing $q$
and, in addition,
the following estimates:
\begin{align}
& \int_{\mathbb{T}^d}\rho_{q+1}(t,x)\d x= \int_{\mathbb{T}^d} \rho_q(t,x)\d x,\ \forall\ t\in [0,T].\label{rhopre}\\
&\norm{ \rqq - \rq }_{C_tC^1_x} \lesssim  \delta_{q+2}^{\frac{1}{2}} ,\label{rho-l9-conv} \\
&\|\mq-\m\|_{L^{2}_{t}C_x} \lesssim \delta_{q+1}^{\frac{1}{2}}, \label{m-L2t-conv}\\
&\|m_{q+1}-m_{q}\|_{L^1_tC_x \cap L^p_t C^s_x\cap C_tH^{-1}_x} \lesssim  \delta_{q+2}^{\frac 12},   \label{m-L1tLace-conv}
\end{align}
where the implicit constants are independent of $q$. In addition, if $T_0>0$, then
\begin{align} \label{suppru}
&T_{q+1}\geq T_q-\delta_{q+2}^{\frac 12}>0.
\end{align}
\end{theorem}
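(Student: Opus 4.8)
\emph{Stage 1: mollification.} The plan is to carry out one step of an intermittent convex integration scheme, and the first move is to mollify $(\rq,\m,\rrq)$ in space and time at a length scale $\ell$ — a small negative power of $\laq$, chosen large enough that $\ell^{-1}$-type costs are cheap against $\laq$ yet small enough that every error it creates is $\ll\delta_{q+2}^{1/2}$. This produces $(\rho_\ell,m_\ell,R_\ell)$ solving a version of \eqref{equa-nsr} with an extra commutator stress; using \eqref{rhobd}--\eqref{rl1} one reads off that $\rho_\ell$ still obeys \eqref{rhobd} with slack, $\|R_\ell\|_{L^1_tC_x}\lesssim\dqq$, all commutators are negligible, and the temporal support of $(\nabla\rho_\ell,m_\ell,R_\ell)$ grows by only $O(\ell)$. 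The sole purpose is to have smooth data with controlled derivatives for the Reynolds estimate.

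\emph{Stage 2: building blocks and perturbations.} Next I introduce the building blocks: spatially homogeneous, mean-zero Mikado-type spatial profiles $W_{(k)}$ at frequency $\laq$, multiplied by sharply concentrated, fast-oscillating temporal functions governed by parameters $(\laq,\tau,\sigma)$ and arranged so that blocks along different directions have \emph{disjoint} temporal supports. I then choose a scalar $\varrho=\varrho(t)$, of size comparable to $\|\rrq(t,\cdot)\|_{C_x}+\dqq$, so that $\varrho\,\Id-R_\ell$ is uniformly positive definite while $\div(\varrho\,\Id)=0$; the geometric lemma then produces amplitudes $a_{(k)}$ realizing \eqref{w2-rho-Rq}. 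The principal momentum perturbation is $\sum_k a_{(k)}W_{(k)}$; to it one adds the usual divergence corrector (so that the high-frequency part of $\mq-m_\ell$ is essentially solenoidal and the quadratic cancellation is clean) and, crucially, a temporal corrector $\wo$ that cancels \emph{outright} the leading high-temporal-frequency error $\partial_t(\cdots)$ rather than hiding it in a pressure. Since $\wo$ is not divergence free it feeds the continuity equation, so I must simultaneously construct a density perturbation $\theta_{q+1}$ (plus a mean-zero adjustment enforcing \eqref{rhopre}) that absorbs $\div\wo$, and set $\rqq=\rho_\ell+\theta_{q+1}$, $\mq=m_\ell+\sum_k a_{(k)}W_{(k)}+(\text{div. corrector})+\wo$.

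\emph{Stage 3: estimates on the new density, momentum and support.} The density bounds \eqref{rhobd}, \eqref{rhoc1}, \eqref{rhopre}, \eqref{rho-l9-conv} follow since $\theta_{q+1}$ is $O(\delta_{q+2}^{1/2})$ in $C^1_x$; the crude $C^N_{t,x}$ and $C^1_{t,x}$ bounds \eqref{mc1}, \eqref{rc1} are read off from the explicit $\laq$-dependence of the blocks; the sharp norms \eqref{m-L2t-conv} and \eqref{m-L1tLace-conv} come from the unit-size normalization and the temporal-intermittency gains of the temporal functions, interpolation, and the inequalities \eqref{b-beta-ve}--\eqref{e3.1} (in particular $L^2_tC_x$ is controlled by $\int\varrho(t)\,dt\lesssim\dqq$, while the remaining, sharper $L^1_t$, $L^p_tC^s_x$, $C_tH^{-1}_x$ norms exploit the extra intermittency gain). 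The support estimate \eqref{suppru} uses that the perturbation is supported where $(\nabla\rho_\ell,m_\ell,R_\ell)\neq0$ together with the sharpness of the temporal supports.

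\emph{Stage 4: the new Reynolds stress — the crux.} Plugging $(\rqq,\mq)$ into \eqref{equa-nsr} produces: the linear hypo-viscous error from $\mu(-\Delta)^\alpha(\rqe\mq)$ and the $\nabla\div(\rqe\mq)$ term; the quadratic oscillation error $\div(\rll\, w_{q+1}\otimes w_{q+1}+R_\ell-\varrho\,\Id)$; the residual high-temporal-frequency error left over after the $\wo$ cancellation; the Nash/density error from $\theta_{q+1}$ and from $\rqe-\rll$; the pressure error $\nabla(P(\rqq)-P(\rho_\ell))$; the mollification commutators; and the interaction error between blocks along different directions. Applying an inverse-divergence operator $\mathcal{R}$ to each defines $R_{q+1}$; \eqref{rc1} follows by brute force, while the decisive bound \eqref{rl1}, $\|R_{q+1}\|_{L^1_tC_x}\lesssim\delta_{q+2}$, follows by trading the temporal-concentration ($\tau$) and oscillation ($\laq,\sigma$) gains of $\mathcal{R}$ against the error sizes. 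I expect the real difficulty to be exactly the bookkeeping that the rigidity of $P$ imposes: gradient errors cannot be dumped into the pressure, so $\varrho$ must be spatially constant — forcing \emph{spatially homogeneous} blocks — and the temporal error must be cancelled directly — forcing $\div\wo\neq0$ and hence the coupled construction of $\theta_{q+1}$; one then has to push the temporal intermittency to full strength (the precise relation among $\laq,\tau,\sigma$) so as to absorb $(-\Delta)^\alpha$ for \emph{every} $\alpha\in(0,1)$ while still keeping $\|R_{q+1}\|_{L^1_tC_x}$ below $\delta_{q+2}$; this is the tightest inequality of the whole scheme and is what pins down the admissible ranges of $b,\beta,\varepsilon$ in \eqref{b-beta-ve}--\eqref{e3.1}. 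In $d=2$, where distinct Mikado directions genuinely interact, I would replace the customary spatial shifts by \emph{temporal} shifts, using the disjointness of the temporal supports to decouple the interaction error.
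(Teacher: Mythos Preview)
Your proposal is correct and follows essentially the same approach as the paper: mollification at scale $\ell=\la^{-30}$, spatially non-intermittent Mikado flows $W_{(k)}$ combined with temporally intermittent profiles $g_{(k)}$ having \emph{disjoint temporal supports} (the temporal shifts you anticipate), a time-only amplitude $\varrho(t)$ so that $\div(\varrho\,\Id)=0$, a temporal corrector $\wo$ that is not divergence-free, and the resulting density perturbation $z_{q+1}(t,x)=-\int_0^t\div w_{q+1}\,ds$ (your $\theta_{q+1}$). Two small corrections: the principal perturbation carries the temporal factor, $w_{q+1}^{(p)}=\sum_k a_{(k)}g_{(k)}W_{(k)}$, and the disjointness of the $g_{(k)}$ kills the cross-direction interactions \emph{algebraically} in the identity for $w_{q+1}^{(p)}\otimes w_{q+1}^{(p)}$, so there is no separate ``interaction error'' term in the Reynolds decomposition; also no extra mean-zero adjustment is needed for $z_{q+1}$ since it is the time-integral of a spatial divergence.
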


In order to prove Theorem \ref{Thm-Iterat},
let us first perform the mollification procedure to the relaxed Navier-Stokes-Reynolds system \eqref{equa-nsr},
which permits to avoid the loss of derivative in the convex integration scheme.
\medskip

\paragraph{\bf Mollification procedure}
Let $\phi_{\epsilon}$ and $\varphi_{\epsilon}$ be standard mollifiers on $\T^d$ and $\T$, respectively,
and $\supp \vf_\ve\subseteq (-\epsilon, \epsilon)$, $\epsilon>0$.
Then, the mollifications of $(\rq, m_{q}, R_{q})$ in space and time are defined by
\begin{align}\label{mol}
\rho_{\ell}:=\left(\rho_{q} *_{x} \phi_{\ell}\right) *_{t} \varphi_{\ell},  	
   \ m_{\ell}:=\left(\m *_{x} \phi_{\ell}\right) *_{t} \varphi_{\ell},
   \  R_\ell:=\left(R_q *_{x} \phi_{\ell}\right) *_{t} \varphi_{\ell},
   \ P_{\ell} := \left( P(\rho_q)*_{x} \phi_{\ell}\right) *_{t} \varphi_{\ell},
\end{align}
where the scale of mollification is given by
\begin{align} \label{l-lbbq}
	\ell:=\la^{-30}.
\end{align}
Then, we derive from \eqref{equa-nsr} that
$(\rho_{\ell}, m_{\ell},R_\ell)$ satisfies
\begin{equation}\label{equa-me}
	\left\{\aligned
	& \p_t \rho_\ell +\div m_\ell = 0,\\
	&\p_t m_\ell+\mu(-\Delta)^{\a} ({\rho_\ell}^{-1}m_\ell) -(\mu+\nu)\nabla \div ({\rho_\ell}^{-1}m_\ell)
    +\div({\rho_\ell}^{-1}m_\ell\otimes m_\ell)
+ \nabla P(\rho_{\ell}) =\div( R_\ell+R_{com}).
	\endaligned
	\right.
\end{equation}
where the commutator stress
	\begin{align}\label{def-rcom}
	 R_{com} 	&:=\mathcal{R}\nabla(P(\rho_{\ell})- P_{\ell})
      +\mu \mathcal{R} (-\Delta)^{\a} \(\rho_\ell^{-1}m_\ell  - (\rho_q^{-1}m_q)*_{x} \phi_{\ell} *_{t} \varphi_{\ell}\)\notag \\
	  &\quad - (\mu+\nu) \mathcal{R} \nabla \div \(\rho_\ell^{-1}m_\ell- (\rho_q^{-1}m_q)*_{x} \phi_{\ell} *_{t} \varphi_{\ell}\) \notag\\
	  &\quad  + \rho_\ell^{-1} m_{\ell}\otimes m_{\ell}- (\rho_q^{-1}\m \otimes \m ) *_{x} \phi_{\ell} *_{t} \varphi_{\ell}
	\end{align}
with $\mathcal{R}$ being the inverse-divergence operator given by \eqref{calR-def} below.

Thus, using the standard mollification estimates and inductive estimates \eqref{rhobd}-\eqref{rl1}
we derive that for every integers $1\leq N\leq 4$ and $0\leq M\leq 1$,
\begin{align}
	&C_1+\frac12\la^{-\beta}\leq \rho_{\ell} \leq
	C_2-\frac12\la^{-\beta},\label{est-moll-rhobd}\\
    & \left\|\p_t^M\rho_\ell\right\|_{C_tC_{x}^N}
	\lesssim  \left\| \p_t^M\rho_q \right\|_{C_tC_{x}^N} \lesssim \lambda_{q}^{\frac{N\ve}{4}},    \label{est-moll-rhoq} \\
	& \left\|\rho_\ell- \rho_q\right\|_{C_tC_{x}^{N-1}}
	\lesssim  \ell \left\|\rho_q\right\|_{C_t^1C_{x}^{N-1}} + \ell \left\|\rho_q\right\|_{C_tC_{x}^{N}} \lesssim \ell\lambda_{q}^{\frac{N\ve}{4}},    \label{est-moll-rl-rq} \\
	& \left\|m_\ell\right\|_{C_{t,x}^N}
	\lesssim \ell^{-N+1}\left\|m_q\right\|_{C_{t,x}^1} \lesssim  \ell^{-N+1}\la^4,    \label{est-moll-mc1} \\
	& \left\|m_\ell-m_q\right\|_{C_{t,x}^{N-1}}
	\lesssim \ell \left\|m_q\right\|_{C_{t,x}^{N}} \lesssim  \ell \la^{2N+2},   \label{est-moll-ml-mq} \\
	&  \|  {R}_\ell\|_{C_{t,x}^N}  \lesssim \ell^{-N+1} \|  {R}_q \|_{C_{t,x}^1}
	\lesssim  \ell^{-N+1} \lambda_{q}^{9}\lesssim \ell^{-N},  \label{est-moll-rcn}\\
	&  \|{R}_\ell \|_{L^{1}_{t}C_x}\lesssim  \|{R}_q \|_{L^{1}_{t}C_x}\lesssim   \delta_{q+1}. \label{est-moll-rl1}
\end{align}

\section{Constructions of momentum and density perturbations}  \label{Sec-Perturb}

The aim of this section is to construct appropriate momentum and density perturbations,
such that the corresponding inductive estimates in Theorem \ref{Thm-Iterat} propagate through
in the convex integration scheme.

The fundamental building blocks in this section will be indexed by the following three parameters
\begin{equation}\label{larsrp}
	 \lambda := \lambda_{q+1},\ \
      \tau:=\lambda_{q+1}^{2\a-10\varepsilon}, \ \  \sigma:=\lambda_{q+1}^{15\varepsilon},
\end{equation}
where $\alpha\in (0,1)$, and
$\varepsilon$ is a sufficiently small constant satisfying \eqref{e3.1}.
In particular,
the parameters $\tau$ and $\sigma$
parameterize, respectively,
the temporal concentration and oscillation effects
of the building blocks.
It is also worth noting that,
the building blocks are able to provide almost 2D intermittency for $\alpha$ close to one.

\subsection{Spatial building blocks.}
Let us first recall the Geometric Lemma in \cite[Lemma 4.1]{bcv21}
which will be used to construct the basic spatial building blocks,
namely, the Mikado flows.

\begin{lemma} ({\bf Geometric Lemma}, \cite[Lemma 4.1]{bcv21})
	\label{geometric lem 2}
	There exist a finite set $\Lambda \subseteq \mathbb{S}^{d-1} \cap \mathbb{Q}^d$ and smooth positive functions
    $\gamma_{(k)}: B_{\varepsilon_u}(\Id) \to \mathbb{R}$,
	where $B_{\varepsilon_u}(\Id)$ is the ball of radius $\varepsilon_u(>0)$ centered at the identity
	in the space of $d \times d$ symmetric matrices,
	such that for  $S \in B_{\varepsilon_u}(\Id)$ we have the following identity:
	\begin{equation}
		\label{sym}
		S = \sum_{k \in \Lambda} \gamma_{(k)}^2(S) k \otimes k.
	\end{equation}
\end{lemma}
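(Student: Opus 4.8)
The final statement to prove is the Geometric Lemma (Lemma~\ref{geometric lem 2}), which is quoted from \cite[Lemma 4.1]{bcv21}. Since the paper merely recalls this result, I will sketch how its proof goes.

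\textbf{Plan of proof.} The plan is to reduce the decomposition of an arbitrary symmetric matrix near the identity to a decomposition of the identity itself, and then to produce the latter by an explicit linear-algebra / implicit-function-theorem argument. First I would observe that the set of symmetric $d\times d$ matrices expressible as $\sum_{k\in\Lambda}\mu_k\, k\otimes k$ with $\mu_k\ge 0$ is a convex cone, so it suffices to exhibit a finite family $\Lambda\subseteq\mathbb{S}^{d-1}\cap\mathbb{Q}^d$ such that the rank-one matrices $\{k\otimes k: k\in\Lambda\}$ positively span a neighborhood of $\Id$ in the $\binom{d+1}{2}$-dimensional space $\mathcal S^d$ of symmetric matrices. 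Concretely, I would choose $\Lambda$ to contain enough rational unit vectors that the vectors $\{k\otimes k\}$ span $\mathcal S^d$ linearly (this is possible over $\mathbb{Q}$ because the map $v\mapsto v\otimes v$ has full-dimensional image: e.g. take coordinate vectors $e_i$ to generate the diagonal directions and suitably normalized rational combinations like $(e_i\pm e_j)/\sqrt2$ to generate the off-diagonal ones, after rescaling to land in $\mathbb S^{d-1}\cap\mathbb Q^d$), and moreover so that $\Id$ lies in the \emph{interior} of the convex cone they generate.

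\textbf{Key steps.} Once such a $\Lambda$ is fixed, the steps are: (1) Solve the linear system $\Id=\sum_{k\in\Lambda}a_k\, k\otimes k$ for a particular solution with all $a_k>0$ strictly positive; this is where the "interior of the cone" property is used, and one may need $|\Lambda|$ strictly larger than $\dim\mathcal S^d$ to have room for a strictly positive solution. (2) For $S$ in a small ball $B_{\varepsilon_u}(\Id)$, write $S=\Id+(S-\Id)$ and use that $\{k\otimes k\}_{k\in\Lambda}$ spans $\mathcal S^d$ to solve $S-\Id=\sum_k b_k(S)\,k\otimes k$ with $b_k(S)$ depending smoothly (indeed linearly, after choosing a linear right inverse) on $S$ and vanishing at $S=\Id$. (3) Set $\gamma_{(k)}^2(S):=a_k+b_k(S)$; by continuity this is strictly positive for $S$ in a sufficiently small ball, so $\gamma_{(k)}(S):=\sqrt{a_k+b_k(S)}$ is well-defined, smooth and positive on $B_{\varepsilon_u}(\Id)$ for $\varepsilon_u$ small enough, and by construction $\sum_{k\in\Lambda}\gamma_{(k)}^2(S)\,k\otimes k=\sum_k a_k\,k\otimes k+\sum_k b_k(S)\,k\otimes k=\Id+(S-\Id)=S$, which is exactly \eqref{sym}.

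\textbf{Main obstacle.} The genuinely delicate point is the \emph{rationality} constraint $\Lambda\subseteq\mathbb{S}^{d-1}\cap\mathbb{Q}^d$ combined with the requirement that $\Id$ be an interior point of the positive cone generated by $\{k\otimes k:k\in\Lambda\}$: one must verify that rational unit vectors are plentiful enough to both span $\mathcal S^d$ and surround $\Id$ strictly, and this is what forces a slightly clever explicit choice (or an approximation argument: first pick real unit vectors achieving interior membership, which is an open condition, then perturb them to nearby rational unit vectors, which exist since $\mathbb S^{d-1}\cap\mathbb Q^d$ is dense in $\mathbb S^{d-1}$, preserving the open interior condition). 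The rationality is needed later in the convex integration scheme to ensure the Mikado flows built from $\Lambda$ are periodic on $\mathbb T^d$, so it cannot be dropped. Everything else — solving linear systems, taking square roots, shrinking $\varepsilon_u$ — is routine. For the full details I would simply refer the reader to \cite[Lemma 4.1]{bcv21}, as the paper does.
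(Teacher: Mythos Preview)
Your proposal is correct and matches the paper's treatment: the paper does not prove Lemma~\ref{geometric lem 2} at all but simply cites \cite[Lemma 4.1]{bcv21}, exactly as you conclude. Your additional sketch of the standard argument (spanning $\mathcal S^d$ by rank-one matrices $k\otimes k$ with rational $k$, finding a strictly positive decomposition of $\Id$, and perturbing via a linear right inverse plus a square root) is accurate and is indeed how such geometric lemmas are proved in the convex integration literature.
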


Let $\{k,k_1,...,k_{d-1}\}$ be an orthonormal base associated with $k$.
Note that, there exists $N_{\Lambda} \in \mathbb{N}$ such that
\begin{equation} \label{NLambda}
	\{ N_{\Lambda} k,N_{\Lambda}k_1,...,N_{\Lambda}k_{d-1} \} \subseteq N_{\Lambda} \mathbb{S}^{d-1} \cap \mathbb{Z}^d.
\end{equation}
Then, let $\Phi : \mathbb{R}^{d-1} \to \mathbb{R}$ be any smooth cut-off function supported
on a ball of sufficiently small radius around the origin,
and then normalize $\Phi$ such that $\phi := - \Delta\Phi$ satisfies
\begin{equation}\label{e4.91}
	\frac{1}{(2 \pi)^{d-1}}\int_{\mathbb{R}^{d-1}} \phi^2(x)\d x = 1.
\end{equation}
By an abuse of notation,
we periodize $\phi$ and $\Phi$ so that they are treated as periodic functions defined on  $\mathbb{T}^d$.

Inspired by \cite{bv19r,cl20.2,DS2017},
we choose the {\it Mikado flows}
as the basic spatial building blocks, defined by
\begin{equation*}
	W_{(k)}(x) :=  \phi ( \lambda  N_{\Lambda}k_1\cdot x,\dots,\lambda  N_{\Lambda}k_{d-1}\cdot x)k,\ \ x\in \T^d,\ \  k \in \Lambda.
\end{equation*}
For brevity of notations, we set
\begin{equation}\label{snp}
	\begin{array}{ll}
		&\phi_{(k)}(x) := \phi ( \lambda  N_{\Lambda}k_1\cdot x,\dots,\lambda  N_{\Lambda}k_{d-1}\cdot x), \\
		&\Phi_{(k)}(x) :=  \lambda^{-2} \Phi (  \lambda  N_{\Lambda}k_1\cdot x,\dots,\lambda  N_{\Lambda}k_{d-1}\cdot x).
		\end{array}
\end{equation}
Thus,
the Mikado flows can be reformulated as
\begin{equation}\label{snwd}
	W_{(k)} = \phi_{(k)} k,\quad  k\in \Lambda.
\end{equation}

Furthermore,
in order to define the incompressible corrector of the perturbations,
we also use the skew-symmetric potential $W_{(k)}^c$,
defined by
\begin{equation}
	\begin{aligned}\label{potential vector}
		W_{(k)}^c := \nabla \Phi_{(k)}\otimes k-k\otimes \nabla \Phi_{(k)}.
	\end{aligned}
\end{equation}
It holds that
\begin{equation}\label{wcwc}
	 \div W_{(k)}^c=W_{(k)}.
\end{equation}

We present the analytic estimates of Mikado flows in the following lemma,
see also \cite{bv19r} for the case where $d=3$.

\begin{lemma} [Estimates of Mikado flows] \label{buildingblockestlemma}
	For $N \geq 0$ and $k\in \Lambda$, we have
	\begin{align}
	&\left\| \phi_{(k)}\right\|_{C^N_{x}}+ \lbb^2 \left\| \Phi_{(k)}\right\|_{C^N_{x}} \lesssim \lambda^N,\label{est-phi-cn}\\
	&\left\| W_{(k)}\right\|_{C^N_{x}}+\lambda \left\| W_{(k)}^c\right\|_{C^N_{x}}\lesssim \lambda^N , \label{ew}
	\end{align}
	where the implicit constants are independent of $\lambda$.
\end{lemma}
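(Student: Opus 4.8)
The statement is a routine scaling estimate for the Mikado profiles $W_{(k)}$, $W_{(k)}^c$ built from the periodized bump $\phi = -\Delta \Phi$. The idea is simply to track how each spatial derivative hitting $\phi_{(k)}(x) = \phi(\lambda N_\Lambda k_1\cdot x,\dots,\lambda N_\Lambda k_{d-1}\cdot x)$ produces a factor $\lambda$ by the chain rule, while the profile $\phi$ itself is a fixed smooth function with bounded $C^N$ norms, and the finitely many vectors $k_i\in\Lambda$ contribute only $\lambda$-independent constants. More precisely, writing $A_k$ for the $(d-1)\times d$ matrix with rows $N_\Lambda k_1,\dots,N_\Lambda k_{d-1}$, we have $\phi_{(k)}(x) = \phi(\lambda A_k x)$, so for any multi-index $\zeta$, $\nabla^\zeta \phi_{(k)}(x) = \lambda^{|\zeta|} (\nabla^\zeta \phi)^{\flat}(\lambda A_k x)$, where the $\flat$ denotes the corresponding tensor contraction with copies of $A_k$; since $|A_k|$ is a fixed constant and $\|\nabla^\zeta\phi\|_{C_x}\le \|\phi\|_{C^N_x}<\infty$ for $|\zeta|\le N$, this gives $\|\phi_{(k)}\|_{C^N_x}\lesssim \lambda^N$ with implicit constant depending only on $d$, $N$, $N_\Lambda$ and the fixed profile $\Phi$ (hence independent of $\lambda$).

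For $\Phi_{(k)}(x) = \lambda^{-2}\Phi(\lambda A_k x)$ the same chain-rule bookkeeping gives $\|\nabla^\zeta \Phi_{(k)}\|_{C_x}\lesssim \lambda^{-2}\lambda^{|\zeta|}\|\Phi\|_{C^N_x}$, so $\lambda^2\|\Phi_{(k)}\|_{C^N_x}\lesssim \lambda^N$, which is the second half of \eqref{est-phi-cn}. The bound \eqref{ew} then follows immediately from the definitions \eqref{snwd} and \eqref{potential vector}: $W_{(k)} = \phi_{(k)} k$ and $|k|=1$ give $\|W_{(k)}\|_{C^N_x} = \|\phi_{(k)}\|_{C^N_x}\lesssim\lambda^N$; and $W_{(k)}^c = \nabla\Phi_{(k)}\otimes k - k\otimes\nabla\Phi_{(k)}$, so each entry of $W_{(k)}^c$ is $\pm$ a component of $\nabla\Phi_{(k)}$, whence $\|W_{(k)}^c\|_{C^N_x}\lesssim \|\nabla\Phi_{(k)}\|_{C^N_x}\lesssim \lambda^{-2}\cdot\lambda^{N+1}=\lambda^{N-1}$, i.e. $\lambda\|W_{(k)}^c\|_{C^N_x}\lesssim\lambda^N$. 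The identity $\div W_{(k)}^c = W_{(k)}$ in \eqref{wcwc} is checked directly: $\div(\nabla\Phi_{(k)}\otimes k - k\otimes\nabla\Phi_{(k)}) = (\Delta\Phi_{(k)})k - (k\cdot\nabla)\nabla\Phi_{(k)}$, and since $\Phi_{(k)}$ depends only on the $d-1$ directions $k_1,\dots,k_{d-1}$ orthogonal to $k$, we have $k\cdot\nabla\Phi_{(k)}=0$, hence $(k\cdot\nabla)\nabla\Phi_{(k)}=0$; moreover $\Delta\Phi_{(k)} = \lambda^{-2}(\Delta\Phi)(\lambda A_k x)\cdot$(factor) — here one uses that $\{N_\Lambda k, N_\Lambda k_1,\dots,N_\Lambda k_{d-1}\}$ is an orthogonal set, so the Laplacian in $x$ restricted to functions of $A_k x$ equals $\lambda^2$ times the $(d-1)$-dimensional Laplacian in the profile variables, giving $\Delta\Phi_{(k)} = (-\Delta\Phi)(\lambda A_k x)\cdot(-1)\cdot$... — concretely $\Delta\Phi_{(k)} = -\phi_{(k)}\cdot(\text{const})$; after the normalization built into the definition $\phi=-\Delta\Phi$ and \eqref{snp}, one gets exactly $\Delta\Phi_{(k)} = -\phi_{(k)}$, so $\div W_{(k)}^c = -\Delta\Phi_{(k)}\,k = \phi_{(k)}k = W_{(k)}$.

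There is no real obstacle here; the only point requiring a little care is the orthogonality bookkeeping that makes the Laplacian of the rescaled potential reproduce exactly $-\phi_{(k)}$ (so that \eqref{wcwc} holds on the nose, not just up to a constant) — this is where the specific choice of an \emph{orthonormal} frame $\{k,k_1,\dots,k_{d-1}\}$ and the common integer scaling $N_\Lambda$ enter. I would present the chain-rule estimate in one line, then verify \eqref{wcwc} via the computation above, and note that all implicit constants depend only on $d,N,N_\Lambda$ and the fixed cut-off $\Phi$, hence are independent of $\lambda = \lambda_{q+1}$.
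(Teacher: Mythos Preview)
Your chain-rule argument for the estimates \eqref{est-phi-cn} and \eqref{ew} is correct and is exactly the standard approach; the paper itself gives no proof of this lemma, merely pointing to \cite{bv19r} for the $d=3$ case. Your digression into verifying the identity \eqref{wcwc} is not part of the lemma and can be dropped; note also that in that computation the orthonormality of $\{k_1,\dots,k_{d-1}\}$ yields $\Delta_x\Phi_{(k)} = N_\Lambda^2(\Delta\Phi)(\lambda A_k x)$, so an $N_\Lambda^2$ factor appears---this is harmless for the $\lesssim$ estimates in the lemma (it is absorbed into the implicit constant), but your claim that one gets ``exactly $\Delta\Phi_{(k)}=-\phi_{(k)}$'' is off by that constant unless $N_\Lambda=1$.
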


\begin{remark}
The Mikado flows used here are constructed in d-dimensional space where $d\geq 2$.
In the 2D case, the supports of Mikado flows are thin planes in each periodic domains,
and so the interaction between different flows can not be avoided
even if the support of cuf-off function $\phi$ is very small.
This is different from the 3D case in \cite{bv19r},
where different Mikado flows have no interactions by choosing suitable spatial shifts.

Let us also mention that,
the Mikado flows in \eqref{snwd} provide no intermittent effect,
and thus can not control the viscosity $(-\Delta)^\alpha$ in \eqref{equa-nsr}.
The keypoint in the control of viscosity would be
to explore the extra temporal intermittency,
which is the content of the next subsection.
\end{remark}

\subsection{Temporal building blocks}

In order to control the viscosity term $(-\Delta)^\alpha$ in \eqref{equa-nsr},
we proceed to construct another type of building blocks
with the temporal intermittency.
The construction below is inspired by the recent progress on temporal
intermittency in \cite{cl20.2,cl21,cl21.2},
which permits to obtain sharp non-uniqueness results.
However, unlike these works,
because of spatial interactions of Mikado flows,
suitable shifts should also be taken into account in the temporal building blocks,
so that the supports of different temporal building blocks are disjoint.

More precisely, let $\{g_k\}_{k\in \Lambda}\subset C_c^\infty([0,T])$
be cut-off functions such that $g_k$ and $g_{k'}$ have disjoint temporal supports if $k\neq k'$,
and
\begin{align*}
	\aint_{0}^T g_k^2(t) \d t=1
\end{align*}
for all $k\in \Lambda$. Since there are finitely many wavevectors in $\Lambda$,
the existence of such $\{g_k\}_{k\in \Lambda}$ can be guaranteed by choosing,
e.g., $g_k=g(t-\alpha_k)$,
where $g\in  C_c^\infty([0,T])$
with very small support and
$\{\alpha_k\}_{k\in \Lambda}$ are the temporal shifts
such that the supports of $\{g_k\}$ are disjoint.

Then, for each $k\in\Lambda$, we rescale $g_k$ by
\begin{align}\label{gk1}
	g_{k,\tau}(t)=\tau^{\frac 12} g_k(\tau t),
\end{align}
where the concentration parameter $\tau$ is given by \eqref{larsrp}.
Note that, $g_{k,\tau}$ indeed concentrates on a small interval of length $\sim \tau^{-1}$.
By an abuse of notation, we periodic $g_{k,\tau}$ such that it is treated as a periodize function defined on $[0,T]$.

Furthermore, we set
\begin{align} \label{hk}
	h_{k,\tau}(t):= \int_{0}^t \left(g_{k,\tau}^2(s)  - 1\right)\ ds,\ \ t\in [0,T],
\end{align}
and
\begin{align}\label{gk}
	\g(t):=g_{k,\tau}(\sigma t),\ \
	h_{(k)}(t):= h_{k,\tau}(\sigma t).
\end{align}
Then, we obtain
\begin{align}   \label{pt-h-gt}
	\p_t(\sigma^{-1} h_{(k)}) = \g^2-1=\g^2-\aint_{0}^T \g^2(t) \d t,
\end{align}
where $\sigma$ is given by \eqref{larsrp}.

The disjointness of the temporal supports
enables us to eliminate the interactions between different building blocks,
which is particularly useful in the 2D case
where the spatial interactions of homogeneous Mikado flows
cannot be avoided.
Another nice feature is that,
the large parameter $\sigma$ permits to balance high temporal oscillation errors
in the construction of the new Reynolds stress later,
see \eqref{I2-esti-endpt1} below.

The crucial temporal intermittent estimates of $\g$ and $h_{(k)}$ are summarized in Lemma \ref{Lem-gk-esti} below.
The proof is similar to the unshifted case in \cite{cl21}.

\begin{lemma} [Estimates of temporal intermittency]   \label{Lem-gk-esti}
	For  $p \in [1,\infty]$, $M  \in \mathbb{N}$,
	we have
	\begin{align}
		\left\|\partial_{t}^{M}\g \right\|_{L^{p}_t} \lesssim \sigma^{M}\tau^{M+\frac12-\frac{1}{p}},\label{gk estimate}
	\end{align}
	where the implicit constants are independent of $\sigma$ and $\tau$.
	Moreover, we have the uniform bound
	\begin{align}\label{hk-esti}
	\|h_{(k)}\|_{C_t}\leq 1.
	\end{align}
\end{lemma}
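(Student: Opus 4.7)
The proof reduces to careful bookkeeping of how the two scaling parameters $\tau$ and $\sigma$ interact with the support structure of $g_k$. The key structural facts I would use are (i) the function $g_{k,\tau}(t) = \tau^{1/2} g_k(\tau t)$ is supported on $[0, T/\tau]$ within one period $[0, T]$ (and extended $T$-periodically to $\mathbb{R}$), with pointwise values of size $\sim \tau^{1/2}$, and (ii) the normalization $\aint_0^T g_k^2\, dt = 1$ implies $\int_0^T g_{k,\tau}^2\, dt = T$, so that $h_{k,\tau}$ defined in \eqref{hk} vanishes at $t = T$ and is thus $T$-periodic.

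For \eqref{gk estimate}, the strategy is to first apply the chain rule to $g_{(k)}(t) = g_{k,\tau}(\sigma t)$ to obtain $\partial_t^M g_{(k)}(t) = \sigma^M (g_{k,\tau})^{(M)}(\sigma t)$. The change of variables $s = \sigma t$, combined with the $T$-periodicity of $(g_{k,\tau})^{(M)}$ and with $\sigma \in \mathbb{N}$ (ensured by \eqref{e3.1}), yields
\begin{align*}
\|\partial_t^M g_{(k)}\|_{L^p([0,T])}^p = \sigma^{Mp} \|(g_{k,\tau})^{(M)}\|_{L^p([0,T])}^p.
\end{align*}
A second substitution $u = \tau t$ on the bump support $[0, T/\tau]$, using $(g_{k,\tau})^{(M)}(t) = \tau^{M+1/2} g_k^{(M)}(\tau t)$, reduces the remaining norm to $\tau^{(M+1/2)p - 1}\|g_k^{(M)}\|_{L^p}^p$. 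Since $g_k$ is fixed and smooth, $\|g_k^{(M)}\|_{L^p}$ is an absolute constant, delivering \eqref{gk estimate}.

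For the uniform bound \eqref{hk-esti}, the $T$-periodicity of $h_{k,\tau}$ immediately gives $\|h_{(k)}\|_{C_t} = \|h_{k,\tau}\|_{C_t([0,T])}$. From the explicit formula $h_{k,\tau}(t) = \int_0^t g_{k,\tau}^2\, ds - t$, together with the observation that $G(t) := \int_0^t g_{k,\tau}^2\, ds$ is nondecreasing, starts at $0$, reaches $T$ at $t = T/\tau$, and stays at $T$ thereafter, one directly estimates $|h_{k,\tau}(t)| = |G(t) - t| \leq \max(t, T - t) \lesssim 1$. No step presents a serious obstacle; the only delicate point is that the use of $\sigma \in \mathbb{N}$ and $T$-periodicity is what makes the $L^p$ computation decouple cleanly, leaving only the factor $\sigma^M$ rather than any additional $\sigma$-dependent loss.
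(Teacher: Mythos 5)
Your proof is correct and is the standard rescaling computation that the paper has in mind when it refers to the unshifted case in Cheskidov--Luo; the paper itself does not spell out the details. Your bookkeeping is accurate: the chain rule isolates the factor $\sigma^M$; the $T$-periodicity of $g_{k,\tau}$ (valid because $g_{k,\tau}$ is supported in a subinterval of $(0,T)$) together with $\sigma\in\mathbb N$ (which follows from $b\varepsilon\in\mathbb N$, $b\in 2\mathbb N$, $a\in\mathbb N$) makes the substitution $s=\sigma t$ exact at the level of the $L^p$ integral; and the substitution $u=\tau t$ then produces the factor $\tau^{M+\frac12-\frac1p}$ with $\|g_k^{(M)}\|_{L^p}$ absorbed into the constant (uniformly over the finitely many $k\in\Lambda$, each $g_k$ being a shift of the same $g$). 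The $p=\infty$ case is immediate from the first two substitutions since they are norm-preserving in $L^\infty$. For \eqref{hk-esti}, your monotonicity argument giving $|G(t)-t|\le\max(t,T-t)\le T$ is the right one; note this yields the stated bound $\le 1$ literally only if $T\le 1$, and otherwise gives $\lesssim 1$, which is what is actually used downstream in the paper, so this is a negligible normalization point rather than a gap.
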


\subsection{Momentum perturbation}  \label{Subsec-Velo-perturb}

We are now in the stage to construct the momentum perturbation,
which mainly consists of the amplitudes,
the principal perturbation, the incompressible corrector and the temporal corrector.
A set of algebraic identities and analytic estimates
will also be given,
which are important to run the convex integration iterative scheme.

To begin with, let us first construct the amplitudes,
which play an important role in the cancellation
between the zero frequency part of the nonlinearity and the old Reynolds stress.
\medskip

\paragraph{\bf Amplitudes}
Let $\chi: [0, \infty) \to \mathbb{R}$ be a smooth cut-off function satisfying
\begin{equation}\label{e4.0}
	\chi (z) =
	\left\{\aligned
	& 1,\quad 0 \leq z\leq 1, \\
	& z,\quad z \geq 2,
	\endaligned
	\right.
\end{equation}
and
\begin{equation}\label{e4.1}
	\frac 12 z \leq \chi(z) \leq 2z \quad \text{for}\quad z \in (1,2).
\end{equation}

Let
\begin{equation}\label{def-varrho}
	\varrho(t) := 2 \varepsilon_u^{-1} \delta_{q+ 1}
	\chi\left( \frac{ \| R_\ell(t,\cdot) \|_{C_x}}{\delta_{q+1} } \right),
\end{equation}
where $\varepsilon_u>0$ is the small constant as in the Geometric Lemma \ref{geometric lem 2}.

Unlike in the contexts of INS
(see, e.g., \cite{bv19b,lt20,cl20.2,lqzz22}),
the function $\varrho$ is spatially independent.
This feature in particular enables to
treat the large amplitude term $\varrho f{\rm Id}$
arising from the cancellation between the nonlinearity of perturbations
and the Reynolds stress
(see identity \eqref{oscillation cancellation calculation} below).

Using \eqref{rc1}, \eqref{rl1}, \eqref{est-moll-rl1}, \eqref{def-varrho} and
the standard H\"{o}lder estimate ,
we derive that for $1\leq N\leq 4$,
\begin{align}
&	\left|  \frac{\rl(t,x)}{\varrho(t)} \right|  \leq \ve_u,\quad \varrho(t)\geq \ve_u^{-1}\delta_{q+ 1},
   \ \ \forall t\in [0,T],\ x\in \mathbb{T}^d,  \label{rl-varho} \\
&   \norm{ \varrho }_{L^p_{t}} \leq 4\ve_u^{-1}(T^{1/p}\delta_{q+1}+\|R_\ell\|_{L^p_tC_x} ),\label{varrhobd}\\
& \norm{ \varrho }_{C_{t}} \lesssim  \ell^{-1} , \quad   \norm{ \varrho }_{C_{t}^N}  \lesssim \ell^{-2N}, \label{rhoB-Ctx.1}\\
&\norm{ \varrho^{1/2}}_{C_{t}} \lesssim \ell^{-1}, \quad  \norm{  \varrho^{1/2} }_{C_{t}^N} \lesssim \ell^{-2N},  \label{rhoB-Ctx.2} \\
&\norm{ \varrho^{-1}}_{C_{t}} \lesssim \ve_u\delta_{q+1}^{-1}, \quad\norm{ \varrho^{-1} }_{C_{t}^N } \lesssim \ell^{-2N},  \label{rhoB-Ctx.3}
\end{align}
where the implicit constants are independent of $q$.

In order to ensure that the temporal support of perturbation
is compatible with that of the mollified Reynolds stress $R_\ell$,
we choose the smooth temporal cut-off function $f: [0,T]\rightarrow [0,1]$
such that
\begin{itemize}
	\item $0\leq f\leq 1$ and $f \equiv 1$ on $\supp_t R_\ell$;
	\item $\supp_t f\subseteq N_{\ell}(\supp_t R_\ell )$;
	\item $\|f \|_{C_t^N}\lesssim \ell^{-N}$,\ \  $1\leq N\leq 6$.
\end{itemize}

We now define the amplitudes of the momentum perturbations by
\begin{equation}\label{akb}
	a_{(k)}(t,x):= f (t) \varrho^{\frac{1}{2} } (t) \rho_\ell^{\frac12}(t,x)\gamma_{(k)}
       \left(\Id-\frac{\rl(t,x)}{\varrho(t)}\right) , \quad k \in \Lambda,
\end{equation}
where $\gamma_{(k)}$ and $\Lambda$ are given in the Geometric Lemma~\ref{geometric lem 2},
and $\rho_\ell$ is the mollified density given by \eqref{mol}.

In view of the Geometric Lemma~\ref{geometric lem 2} and the construction of $a_{(k)}$,
the following identity holds:
\begin{align}\label{velcancel}
	\sum\limits_{ k \in  \Lambda} \rho_\ell^{-1}a_{(k)}^2 \fint_{\T^3}
	W_{(k)} \otimes W_{(k)} dx
	= & \varrho f^2 {\rm Id} -\rl.
\end{align}

Moreover, the analytic estimates of amplitudes are collected in Lemma \ref{mae-endpt1} below,
which can be proved in a similar fashion as that of \cite[Lemma~4.1]{lzz21},
and so the proof is omitted here.

\begin{lemma} [Estimates of amplitudes] \label{mae-endpt1}
For $1\leq N\leq 6$, $k\in \Lambda$, we have
	\begin{align}
		\label{e3.15}
		&\norm{a_{(k)}}_{L^2_{t}C_x} \lesssim \delta_{q+1}^{\frac{1}{2}} ,\\
		\label{mag amp estimates}
		& \norm{ a_{(k)} }_{C_{t,x}} \lesssim \ell^{-1},\ \ \norm{ a_{(k)} }_{C_{t,x}^N} \lesssim \ell^{-5N},
	\end{align}
where the implicit constants are independent of $q$.
\end{lemma}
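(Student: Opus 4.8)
The plan is to read all three bounds directly off the explicit formula \eqref{akb}, viewing $a_{(k)}=f\,\varrho^{1/2}\,\rho_\ell^{1/2}\,\gamma_{(k)}(\Id-R_\ell/\varrho)$ as a product of four factors and combining the estimates already available for each of them through the Leibniz rule and a standard composition (Fa\`a di Bruno) inequality; this follows the scheme of \cite[Lemma~4.1]{lzz21}. For \eqref{e3.15} I would first note the pointwise bound $|a_{(k)}(t,x)|\lesssim \varrho^{1/2}(t)$: indeed $0\le f\le 1$, $\rho_\ell^{1/2}\le C_2^{1/2}$ by \eqref{est-moll-rhobd}, and by \eqref{rl-varho} the argument $\Id-R_\ell(t,x)/\varrho(t)$ lies in a fixed compact subset of $B_{\varepsilon_u}(\Id)$ on which $\gamma_{(k)}$ and all its derivatives are bounded by constants depending only on $\Lambda$. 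Hence $\|a_{(k)}\|_{L^2_tC_x}\lesssim \|\varrho^{1/2}\|_{L^2_t}=\|\varrho\|_{L^1_t}^{1/2}$, and \eqref{varrhobd} with $p=1$ together with the mollification bound \eqref{est-moll-rl1} gives $\|\varrho\|_{L^1_t}\lesssim \delta_{q+1}+\|R_\ell\|_{L^1_tC_x}\lesssim \delta_{q+1}$, which proves \eqref{e3.15}. The $C_{t,x}$ bound in \eqref{mag amp estimates} is the same estimate with $L^2_t$ replaced by $C_t$: the three bounded factors contribute $O(1)$ while $\|\varrho^{1/2}\|_{C_t}\lesssim \ell^{-1}$ by \eqref{rhoB-Ctx.2}, so $\|a_{(k)}\|_{C_{t,x}}\lesssim \ell^{-1}$.

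For the higher-order bound, $1\le N\le 6$, I would record the $C^N$-size of each factor: $\|f\|_{C^N_t}\lesssim \ell^{-N}$ by construction of $f$; $\|\varrho^{1/2}\|_{C^N_t}\lesssim \ell^{-2N}$ by \eqref{rhoB-Ctx.2}; $\|\rho_\ell^{1/2}\|_{C^N_{t,x}}\lesssim \ell^{-N}$, which follows since $\rho_\ell\ge C_1>0$ is bounded below and $\|\rho_\ell\|_{C^N_{t,x}}\lesssim \ell^{-N}$ — the latter from \eqref{est-moll-rhoq}, the continuity equation $\p_t\rho_q=-\div m_q$ with \eqref{mc1} for the time derivatives, and the fact that $\ell^{-1}=\lambda_q^{30}$ dominates the factors $\lambda_q^{N\varepsilon/4}$ and $\lambda_q^{2N+2}$ — followed by the composition estimate for $z\mapsto z^{1/2}$ on $[C_1,\infty)$; and $\|R_\ell/\varrho\|_{C^N_{t,x}}\lesssim \ell^{-2N}$ from \eqref{est-moll-rcn} (which gives $\|R_\ell\|_{C^N_{t,x}}\lesssim \ell^{-N}$, as $\lambda_q^9\ll\ell^{-1}$) and \eqref{rhoB-Ctx.3}. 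The Fa\`a di Bruno inequality then gives $\|\gamma_{(k)}(\Id-R_\ell/\varrho)\|_{C^N_{t,x}}\lesssim \|R_\ell/\varrho\|_{C^N_{t,x}}+\|R_\ell/\varrho\|_{C^1_{t,x}}^N\lesssim \ell^{-2N}$, the constants depending only on $\gamma_{(k)}$ since the argument is bounded in $C_{t,x}$. Applying the Leibniz rule to the four-fold product and taking the worst distribution of the $N$ derivatives, $\|a_{(k)}\|_{C^N_{t,x}}\lesssim \ell^{-N}\cdot\ell^{-2N}\cdot\ell^{-N}\cdot\ell^{-2N}\lesssim \ell^{-5N}$ (using $\ell<1$; one can be crude here, the product of the individual $C^N$-norms already sitting below $\ell^{-6N}$ and only improving under any redistribution). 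All implicit constants depend only on $d$, $\Lambda$, $C_1$, $C_2$, $T$ and $\varepsilon_u$, hence are independent of $q$.

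The only genuinely delicate point is the composition step: one must first verify that the argument $\Id-R_\ell/\varrho$ of $\gamma_{(k)}$ remains in a compact subset of the open ball $B_{\varepsilon_u}(\Id)$ on which $\gamma_{(k)}$ is defined and smooth — this is exactly \eqref{rl-varho}, which in turn rests on the lower bound $\varrho\ge\varepsilon_u^{-1}\delta_{q+1}$ built into \eqref{def-varrho} through the cut-off $\chi$ — and then to apply the chain-rule estimate with the correct count of $\ell^{-1}$ powers. Everything else is routine bookkeeping with the mollification estimates of \S\ref{Sec-Main-Iter}, using repeatedly that $\ell^{-1}=\lambda_q^{30}$ absorbs all the subpolynomial and polynomial-in-$\lambda_q$ factors that appear.
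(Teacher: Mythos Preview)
Your approach is exactly what the paper intends: it omits the proof and cites \cite[Lemma~4.1]{lzz21}, which proceeds via the same Leibniz/Fa\`a di Bruno bookkeeping on the four factors of \eqref{akb}. The arguments for \eqref{e3.15} and the $C_{t,x}$ bound are clean and correct.

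There is one slip in the final step that you should fix. You write $\ell^{-N}\cdot\ell^{-2N}\cdot\ell^{-N}\cdot\ell^{-2N}\lesssim \ell^{-5N}$, but the left-hand side equals $\ell^{-6N}$, and since $\ell<1$ one has $\ell^{-6N}>\ell^{-5N}$; the crude product of the four $C^N$ norms is therefore \emph{worse} than the target, not better, and the parenthetical remark about ``only improving under any redistribution'' does not close the gap as written. The fix is simply to carry out the Leibniz sum honestly: for $N_1+N_2+N_3+N_4=N$ each term is bounded by $\ell^{-N_1}\cdot\ell^{-\max(2N_2,1)}\cdot\ell^{-N_3}\cdot\ell^{-cN_4}$ with $c\le 3$ (once you track the extra $\lambda_q^9\lesssim\ell^{-1}$ contribution in $\|R_\ell/\varrho\|_{C^M}$ coming from the $j=0$ term of its own Leibniz expansion), and the maximum of the exponent over the simplex is at most $3N+1\le 5N$ for $N\ge 1$. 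So the stated bound $\ell^{-5N}$ holds with room to spare, but you need the redistribution computation, not the crude product, to see it.
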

\medskip

\paragraph{\bf Principal perturbation}
The principal part of the momentum perturbation is defined by
\begin{align}  \label{pv}
		w_{q+1}^{(p)} &:= \sum_{k \in \Lambda } a_{(k)}\g W_{(k)},
\end{align}
where  $a_{(k)}$ is the amplitude defined in \eqref{akb},
$\g$ is the temporal building block given by \eqref{gk},
and  $ W_{(k)}$ is the Mikado flow constructed in \eqref{snwd},
$k\in \Lambda$.

In view of \eqref{velcancel}
and the disjointness of the supports of $\{g_{(k)}\}$,
the following algebraic identity holds
\begin{align} \label{oscillation cancellation calculation}
	\rho_\ell^{-1} w_{q+ 1}^{(p)} \otimes w_{q+ 1}^{(p)} + \rl
	=  \varrho f^2 {\rm Id}+ \sum_{k \in \Lambda }\rho_\ell^{-1}a_{(k)}^2 \g^2 \P_{\neq 0}(W_{(k)}\otimes W_{(k)})
	 + \sum_{k \in \Lambda }\rho_\ell^{-1} a_{(k)}^2 (\g^2-1)k\otimes k,
\end{align}
where $\P_{\neq 0}$ denotes the spatial projection onto nonzero Fourier modes.

Identity \eqref{oscillation cancellation calculation} shows that the old Reynolds stress
can be reduced by the zero frequency part of nonlinearity,
resulting in the remaining terms being two high spatial and temporal errors and
an extra term
$\varrho f^2 {\rm Id}$.
Note that,
in view of \eqref{rl-varho},
$|\varrho|$ is no less than $|\rl|$,
up to some constant,
the term $\varrho f^2{\rm Id}$ is actually large and
would destroy the inductive estimate \eqref{rl1}
for the Reynolds stress at level $q+1$.
This does not cause trouble in the incompressible case,
as its divergence can be absorbed into the pressure term
(or, by using the Leray projection).
But this strategy fails in the compressible case,
as the pressure depends on the density.
The key idea here is to make use of the time-independence
of  $\varrho$
to eliminate this trouble term by taking the divergence operator.
\medskip

\paragraph{\bf Incompressible corrector}
We also introduce the incompressible corrector, defined by
	\begin{align}  \label{wqc-dqc}
		w_{q+1}^{(c)}
		&:=   \sum_{k\in \Lambda }\g W^c_{(k)} \nabla a_{(k)}  ,
	\end{align}
where $W^c_{(k)}$ is the potential given by  \eqref{potential vector}, $k\in \Lambda$.
Note that, by \eqref{wcwc},
	\begin{align} \label{div free velocity}
		&  w_{q+1}^{(p)} + w_{q+1}^{(c)}
		=\div\left(  \sum_{k \in \Lambda} a_{(k)} \g W^c_{(k)} \right).
	\end{align}
Due to the fact that $a_{(k)} \g W^c_{(k)}$ is skew-symmetric for all $k\in \Lambda$, one has
\begin{align} \label{div-wpc-dpc-0}
	\div (w_{q+1}^{(p)} + w_{q +1}^{(c)})= 0,
\end{align}
which justifies the name of incompressibility.

\begin{remark}
We note that,
this kind of incompressible correctors is widely used in the convex integration schemes
for incompressible systems,
see for instance \cite{bv19r,bcv21}.
But for compressible systems,
it seems not necessary to keep the perturbation incompressible.
However, in this paper,
we still use this corrector to keep part of the perturbations incompressible,
which would permit to introduce a less variational density in our scheme.
In fact, our total perturbation for the momentum is not incompressible,
see the following paragraph.
\end{remark}

\medskip

\paragraph{\bf Temporal corrector}
The last part of momentum building blocks is the temporal corrector,
defined by
\begin{align}  \label{wo}
		& w_{q+1}^{(o)}:= -\sigma^{-1} \sum_{k\in\Lambda}h_{(k)}  k\otimes k\nabla(\rho_\ell^{-1}a_{(k)}^2 ).
\end{align}
By virtue of \eqref{hk}, \eqref{wo} and the Leibnitz rule,
we get  the identity
\begin{align} \label{utemcom}
& \partial_{t} w_{q+1}^{(o)}+
\sum_{k \in \Lambda }\div\(\rho_\ell^{-1}a_{(k)}^2 (\g^2-1)k\otimes k\)
  = -\sigma^{-1} \sum_{k\in\Lambda} h_{(k)}   k\otimes k \p_t\nabla(\rho_\ell^{-1}a_{(k)}^2 ).
\end{align}

\begin{remark}
In view of \eqref{utemcom},
the temporal corrector permits to balance the high temporal frequency error
in \eqref{oscillation cancellation calculation},
resulting in the low-frequency term $\partial_t \na (\rho_\ell^{-1} a^2_{(k)})$.
\end{remark}

\begin{remark} \label{incom-3.7}
For the case of imcompressible  hypo-viscous NSE \eqref{equa-NSE-Incomp},
the amplitudes of velocity perturbations are defined by
\begin{equation}\label{akb-1}
	a_{(k)}(t,x):= f (t) \varrho^{\frac{1}{2} } (t)\gamma_{(k)}
	\left(\Id-\frac{\mathring\rl(t,x)}{\varrho(t)}\right) , \quad k \in \Lambda,
\end{equation}
where $\mathring R := R -\frac{1}{d}tr(R)\Id$.

Moreover, since the relaxed solutions $\{u_q\}_{q\in \mathbb{N}}$ shall be divergence free,
the temporal corrector for the incompressible case is defined by
\begin{align}  \label{wo-1}
	& w_{q+1}^{(o)}:= -\sigma^{-1} \sum_{k\in\Lambda}\P_{H}\P_{\neq0}\(h_{(k)}  k\otimes k\nabla(a_{(k)}^2 ) \),
\end{align}
where $\P_{H}$ is the Helmholtz-Leray projector, i.e.,
$\P_{H}=\Id-\nabla\Delta^{-1}\div$ and $\P_{\neq 0}$ denotes the spatial projection onto nonzero Fourier modes.
Then, similarly to \eqref{utemcom},
the following identity holds
\begin{align} \label{utemcom-2}
	\partial_{t} w_{q+1}^{(o)}+
	\sum_{k \in \Lambda }\div\(a_{(k)}^2 (\g^2-1)k\otimes k\)
	& = \left(\nabla\Delta^{-1}\div\right) \sigma^{-1}  \sum_{k \in \Lambda} \P_{\neq 0} \partial_{t}\(h_{(k)} k\otimes k \nabla (a_{(k)}^2)\)\notag\\
	&\quad -\sigma^{-1} \sum_{k\in\Lambda}\P_{\neq 0} \(h_{(k)}   k\otimes k \p_t\nabla(a_{(k)}^2 )\).
\end{align}
\end{remark}
\medskip

\paragraph{\bf Momentum perturbation}
Now, we define the momentum perturbation at level $q+1$  by
	\begin{align}
		w_{q+1} &:= w_{q+1}^{(p)} + w_{q+1}^{(c)}+\wo.
		\label{velocity perturbation}
	\end{align}
Note that, by the constructions above,
$w_{q+1}$ is mean-free but not divergence-free.

The new momentum field $m_{q+1}$ at level $q+1$ is then defined by
	\begin{align}
		& m_{q+1}:= m_\ell + w_{q+1},
		\label{q+1 velocity}
	\end{align}
where $m_\ell$ is the previous mollified momentum defined in \eqref{mol}.
\medskip

\paragraph{\bf Density perturbation}
Because the momentum perturbation $w_{q+1}$ is not divergence-free,
we should further construct the density perturbation
such that the transport equation in \eqref{equa-nsr} is valid at level $q+1$.

For this purpose, we define the density perturbation by
\begin{align}  \label{zq1-def}
\thq(t,x):= -\int_0^t \div w_{q+1}(s,x) \d s
=\sigma^{-1} \sum_{k\in \Lambda}  \int_0^t  \(h_{(k)} (k\cdot\nabla)^2(\rho_{\ell}^{-1} a_{(k)}^2)\)(s,x)   \d s.
\end{align}
Then, the new density at level $q+1$ is defined by
\begin{align}\label{def-rqq}
\rqq:=\rho_{\ell}+\thq.
\end{align}
Thus, we have
\begin{align}
\p_t \rqq+\div \mq= \p_t \thq+\div w_{q+1}=0,
\end{align}
which verifies the transport equation in \eqref{equa-nsr} at level $q+1$.
\medskip

\paragraph{\bf Estimates of momentum and density perturbations}
We summarize the key estimates of the momentum and density perturbations in Proposition \ref{Prop-totalest} below,
which will be frequently used in \S \ref{Subsec-induc-vel-mag} and \S \ref{Sec-Rey-Endpt1}.

\begin{proposition}  [Estimates of momentum and density perturbations] \label{Prop-totalest}
	For any $p \in [1,\infty]$ and integers $0\leq N\leq 4$,
the following estimates hold:
	\begin{align}
		&\norm{\na^N w_{q+1}^{(p)} }_{L^p_tC_x }  \lesssim \ell^{-1} \lbb^N \tau^{\frac12-\frac{1}{ p}},\label{uprinlp-endpt1}\\
		&\norm{\na^N w_{q+1}^{(c)} }_{L^p_tC_x   } \lesssim \ell^{-5}\lbb^{N-1}\tau^{\frac12-\frac{1}{p}}, \label{ucorlp-endpt1}\\
		&\norm{\na^N \wo }_{L^p_tC_x  }\lesssim \ell^{-5N-7}\sigma^{-1},\label{dcorlp-endpt1}\\
		&\norm{ w_{q+1}^{(p)} }_{C_{t,x}^N }  + \norm{ w_{q+1}^{(c)} }_{C_{t,x}^N }+\norm{ \wo }_{C_{t,x}^N }
		\lesssim \lambda^{2N +2}.    \label{principal c1 est}
	\end{align} 	
	Moreover, for $0\leq N\leq 4$, $0\leq M\leq 1$,
	\begin{align}\label{est-thq}
	\|\p_t^M\thq\|_{C_tC_x^N}\lesssim \ell^{-5N-11}\sigma^{-1}.
	\end{align}
The above implicit constants are independent of $q$.
\end{proposition}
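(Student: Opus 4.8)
The plan is to prove Proposition \ref{Prop-totalest} by estimating each of the three constituents $w_{q+1}^{(p)}$, $w_{q+1}^{(c)}$, $\wo$ of the momentum perturbation separately, and then the density perturbation $\thq$, using only the definitions \eqref{pv}, \eqref{wqc-dqc}, \eqref{wo}, \eqref{zq1-def}, the analytic estimates of the Mikado flows (Lemma \ref{buildingblockestlemma}), the temporal intermittency estimates (Lemma \ref{Lem-gk-esti}), and the amplitude estimates (Lemma \ref{mae-endpt1}), together with the parameter relations \eqref{larsrp} and \eqref{l-lbbq}. The guiding principle is that the spatial dependence lives entirely in $a_{(k)}$ (whose $C^N_x$-norms cost $\ell^{-5N}$, up to a base $\ell^{-1}$) and in $\phi_{(k)}$ or $\nabla\Phi_{(k)}$ (each spatial derivative costing one power of $\lambda$), while the temporal building block $\g$ contributes the $L^p_t$ factor $\tau^{\frac12-\frac1p}$ via \eqref{gk estimate} with $M=0$.

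For \eqref{uprinlp-endpt1}, I would apply $\na^N$ to \eqref{pv} by the Leibniz rule, distributing derivatives between $a_{(k)}$ and $W_{(k)}=\phi_{(k)}k$; using \eqref{ew} the top-order term $a_{(k)}\g\,\na^N W_{(k)}$ dominates and contributes $\|a_{(k)}\|_{C_{t,x}}\|\g\|_{L^p_t}\lambda^N\lesssim \ell^{-1}\lambda^N\tau^{\frac12-\frac1p}$, while lower-order terms carry extra $\ell^{-5j}\lambda^{-j}$ factors which are harmless since $\ell^{-1}=\lambda^{30}\ll\lambda$. Hölder in time separates $\|\g\|_{L^p_t}$ from the $C_x$-norm of the spatial factor. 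For \eqref{ucorlp-endpt1} the same scheme applies to \eqref{wqc-dqc}, except now each summand already carries $\na a_{(k)}$ and a factor $W^c_{(k)}$ whose $C^N_x$-norm is $\lesssim\lambda^{N-1}$ by \eqref{ew}; the amplitude derivative produces the $\ell^{-5}$ (rather than $\ell^{-1}$) prefactor and one loses a power of $\lambda$, giving $\ell^{-5}\lambda^{N-1}\tau^{\frac12-\frac1p}$. For \eqref{dcorlp-endpt1}, I differentiate \eqref{wo}: here $h_{(k)}$ is bounded in $C_t$ by \eqref{hk-esti} — crucially it carries \emph{no} growing temporal factor — and the spatial factor $k\otimes k\,\na(\rho_\ell^{-1}a_{(k)}^2)$ has its $C^N_x$-norm controlled by products of $\|\rho_\ell^{-1}\|_{C^N_x}$ (bounded by \eqref{est-moll-rhobd}, \eqref{est-moll-rhoq}, hence by $\ell^{-N}$-type factors) and $\|a_{(k)}\|_{C^N_{t,x}}^2\lesssim\ell^{-10N}$; the overall $\sigma^{-1}$ comes straight from the prefactor in \eqref{wo}, and careful bookkeeping of the derivative counts yields the stated $\ell^{-5N-7}\sigma^{-1}$. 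The $C^N_{t,x}$ bound \eqref{principal c1 est} is the crude one: here I allow temporal derivatives to fall on $\g$, each producing $\sigma\tau$ by \eqref{gk estimate} (or on $h_{(k)}$, producing $\g^2\sim\sigma\tau$ type factors via \eqref{pt-h-gt}), and spatial derivatives producing $\lambda$; since $\sigma\tau\le\lambda^{2}$ and all the $\ell^{-1}$-factors are $\le\lambda^{30}$, with $N\le 4$ everything is absorbed into $\lambda^{2N+2}$ once $a$ is large. Finally \eqref{est-thq} follows by writing $\na^N\p_t^M\thq$ from \eqref{zq1-def}: for $M=0$ one integrates in time and bounds the integrand's $C^N_x$-norm ($h_{(k)}$ bounded, $(k\cdot\na)^2(\rho_\ell^{-1}a_{(k)}^2)$ costing two extra spatial derivatives beyond \eqref{dcorlp-endpt1}-type counting), for $M=1$ one differentiates away the integral and uses $\p_t h_{(k)}=\sigma(\g^2-1)$ from \eqref{pt-h-gt} — here $\|\g^2\|_{C_t}\lesssim\ell^{-2}$-type, so the $\sigma$ from the derivative cancels the external $\sigma^{-1}$, again giving an $\ell$-power times $\sigma^{-1}$ after re-accounting; the exponent $5N+11$ is just the sum of the worst-case derivative costs.

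The main obstacle is the careful bookkeeping of the mollification scale $\ell^{-1}=\lambda^{30}$ against the building-block frequency $\lambda$ and the temporal parameters $\tau=\lambda^{2\alpha-10\va}$, $\sigma=\lambda^{15\va}$ in the $C^N_{t,x}$ estimate \eqref{principal c1 est}: one must verify that no combination of derivatives (spatial derivatives hitting $\phi_{(k)}$, temporal derivatives hitting $\g$ or $h_{(k)}$, derivatives hitting the amplitude) exceeds $\lambda^{2N+2}$ for $0\le N\le 4$. This is where the choice $\beta<\tfrac1{100b^2}$ and $\va$ small (so that $2\alpha-10\va<2$ and $30\cdot$-powers stay controlled) is used; it is routine but unforgiving. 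The rest of the proof is a mechanical application of Leibniz's rule together with the three cited lemmas, and I would present it by first isolating, for each perturbation piece, the dominant term in the Leibniz expansion and then noting that every other term is smaller by a factor that is a nonnegative power of $\ell\lambda\lesssim 1$ (or of $\ell$), which I would state once and invoke repeatedly rather than re-deriving.
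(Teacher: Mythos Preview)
Your overall strategy is correct and matches the paper's proof: Leibniz expansion of each perturbation piece, with spatial derivatives costing powers of $\lambda$ on $W_{(k)}$ (resp.\ $W^c_{(k)}$) and powers of $\ell^{-5}$ on the amplitudes, and the $L^p_t$ factor coming from $\|\g\|_{L^p_t}\lesssim\tau^{1/2-1/p}$.

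Two points need correction. First, a notational slip: $\ell^{-1}=\lambda_q^{30}$, not $\lambda_{q+1}^{30}$; the inequality $\ell^{-5}\ll\lambda$ you use throughout is $\lambda_q^{150}\ll\lambda_{q+1}$, which holds because $b>150$.

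Second, your treatment of the $M=1$ case of \eqref{est-thq} is confused. After ``differentiating away the integral'' via the fundamental theorem of calculus, $\partial_t z_{q+1}$ is simply the integrand $\sigma^{-1}\sum_k h_{(k)}(k\cdot\nabla)^2(\rho_\ell^{-1}a_{(k)}^2)$; there is no further time derivative left to fall on $h_{(k)}$, and hence no need for the identity $\partial_t h_{(k)}=\sigma(\g^2-1)$. Moreover your claim $\|\g^2\|_{C_t}\lesssim\ell^{-2}$ is false: by \eqref{gk estimate} with $p=\infty$ one has $\|\g\|_{C_t}\lesssim\tau^{1/2}$, so $\|\g^2\|_{C_t}\lesssim\tau=\lambda^{2\alpha-10\varepsilon}$, which is far larger than $\ell^{-2}=\lambda_q^{60}$. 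If you actually followed your stated route you would lose the crucial $\sigma^{-1}$ and pick up a factor of $\tau$ instead. The fix is simpler than what you wrote: for both $M=0$ and $M=1$ the quantity $\partial_t^M z_{q+1}$ is bounded pointwise by (a time integral of, or the value of) $\sigma^{-1}|h_{(k)}|\cdot|(k\cdot\nabla)^2(\rho_\ell^{-1}a_{(k)}^2)|$, and one uses $\|h_{(k)}\|_{C_t}\le 1$ from \eqref{hk-esti} directly, exactly as in your $M=0$ case. This is how the paper handles both cases uniformly.
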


\begin{proof}
	First,
	using \eqref{gk estimate}, \eqref{pv} and Lemmas \ref{buildingblockestlemma} and \ref{mae-endpt1}
	we have that for any $p \in (1,\infty)$,
	\begin{align*}
		\norm{\nabla^N w_{q+1}^{(p)} }_{L^p_tC_x }
		\lesssim&  \sum_{k \in \Lambda}
		\sum\limits_{N_1+N_2 = N}
		\|a_{(k)}\|_{C^{N_1}_{t,x}}\|\g\|_{L_t^p}
		\norm{ W_{(k)} }_{C^{N_2}_{x} } \notag \\
		\lesssim& \sum\limits_{N_1+N_2 = N} \ell^{-5N_1-1}\tau^{\frac12-\frac{1}{p}}\lbb^{N_2}\notag\\
		\lesssim&	 \ell^{-1}\lbb^N\tau^{\frac12-\frac{1}{p}},
	\end{align*}
where the last step is due to $\ell^{-5} \ll \lbb$.
Thus, \eqref{uprinlp-endpt1} is  verified.
	
	Moreover, by \eqref{b-beta-ve},
	\eqref{gk estimate}, \eqref{wqc-dqc} and
Lemmas \ref{buildingblockestlemma} and \ref{mae-endpt1},
	\begin{align*}
		\norm{\na^N w_{q+1}^{(c)} }_{L^p_tC_x}	\lesssim&\,
		\sum\limits_{k\in \Lambda } \sum_{N_1+N_2=N}\norm{ a_{(k)} }_{C_{t,x}^{N_1+1}}\|\g\|_{L^p_t}
        \norm{W^c_{(k)}}_{C^{N_2}_x } \nonumber   \\
		\lesssim & \,  \sum_{N_1+N_2=N}  \ell^{-5N_1-5}\tau^{\frac12-\frac{1}{p}}\lbb^{N_2-1}  \nonumber   \\
		\lesssim &\, \ell^{-5}\lambda^{N-1} \tau^{\frac12-\frac{1}{p}},
	\end{align*}
	which implies \eqref{ucorlp-endpt1}.
	
	We then estimate the temporal corrector $\wo$.
By \eqref{est-moll-rhobd} and \eqref{est-moll-rhoq},
\begin{align}   \label{narho-1-lbbq}
     \|\na^N \rho_\ell \|_{C_{t,x}} \lesssim \lbb_q^{\frac{N\ve}{4}}.
\end{align}
Then, using \eqref{hk-esti}, \eqref{wo} and Lemma \ref{mae-endpt1} we get
	\begin{align*}
		\norm{ \na^N \wo }_{L^p_tC_x }
		&\lesssim \sigma^{-1}\sum_{k \in \Lambda}\|h_{(k)}\|_{C_{t}} \|\nabla^{N+1} (\rho_\ell^{-1}a^2_{(k)})\|_{C_{t,x}} \notag\\
		&\lesssim \sigma^{-1}\sum_{k \in \Lambda} \|h_{(k)}\|_{C_{t}} \sum_{N_1+N_2=N+1}  \| \rho_\ell^{-1}\|_{C^{N_1}_{t,x}} \|a^2_{(k)}\|_{C^{N_2}_{t,x}}\notag\\
		&\lesssim  \ell^{-5N-7} \sigma^{-1}.
	\end{align*}
	
	Regarding the $C^N_{t,x}$-estimate of velocity perturbations,
	using Lemmas \ref{buildingblockestlemma}, \ref{Lem-gk-esti} and \ref{mae-endpt1} we get
		\begin{align} \label{wprincipal c1 est}
		\norm{ w_{q+1}^{(p)} }_{C_{t,x}^N }
		\lesssim&   \sum_{k \in \Lambda}
		\|a_{(k)}\|_{C_{t,x}^N }
		\sum_{0\leq N_1+N_2 \leq N} \norm{ \g}_{C_{t}^{N_1}}\norm{ W_{(k)} }_{C_{x}^{N_2}}  \notag \\
		\lesssim& \sum_{0\leq N_1+N_2 \leq N}
		\ell^{-5N-1} \sigma^{N_1} \tau^{N_1+\frac 12} \lbb^{N_{2}} \notag \\
		\lesssim& \lambda^{2N+2},
	\end{align}
	where we also used \eqref{b-beta-ve}, \eqref{e3.1}, \eqref{l-lbbq} and \eqref{larsrp} in the last step.
	
	Similarly, we have
	\begin{align} \label{uc c1 est}
	 \norm{ w_{q+1}^{(c)} }_{C_{t,x}^N }
		& \lesssim   \sum_{k\in \Lambda }
		\left\|\g  W^c_{(k)}\nabla a_{(k)} \right\|_{C_{t,x}^N } \notag \\
		& \lesssim   \sum_{k \in \Lambda }
		\|a_{(k)}\|_{C_{t,x}^{N+1}}
		\sum_{0\leq N_1+N_2 \leq N} \norm{ \g}_{C_{t}^{N_1}}\norm{ W^c_{(k)} }_{C_{x}^{N_2}}   \nonumber \\
		& \lesssim   \sum_{0\leq N_1+N_2 \leq N}
		\ell^{-5N-5} \sigma^{N_1} \tau^{N_1+\frac 12}
	\lbb^{N_2-1}  \notag \\
		& \lesssim  \lambda^{2N+1},
	\end{align}
and
	\begin{align} \label{wo c1 est.2}
	\norm{ \wo }_{C_{t,x}^N }
	& \lesssim \sigma^{-1} \sum_{k \in \Lambda} \|h_{(k)} \na (\rho_\ell^{-1}a_{(k)}^2) \|_{C^N_{t,x}} \notag \\
	& \lesssim \sigma^{-1} \sum_{k \in \Lambda} \|h_{(k)}\|_{C_{t}^{N}} \|\nabla (\rho_\ell^{-1}a^2_{(k)})\|_{C_{t,x}^{N}} \notag \\
	& \lesssim \sigma^{N-1} \tau^{N} \ell^{-5N-7}  \lesssim \lbb^{2N+1},
	\end{align}
where the last step was due to \eqref{b-beta-ve}, \eqref{e3.1}, \eqref{larsrp}
and the fact that
\begin{align*}
   \|h_{(k)}\|_{C_{t}^N} \lesssim \sigma^N \tau^N.
\end{align*}
	
	Thus, combining \eqref{wprincipal c1 est}-\eqref{wo c1 est.2} altogether and using \eqref{larsrp}
	we conclude that
	\begin{align} \label{utotalc1}
		& \norm{ w_{q+1}^{(p)} }_{C_{t,x}^N }  + \norm{ w_{q+1}^{(c)} }_{C_{t,x}^N }+\norm{ \wo }_{C_{t,x}^N }
		\lesssim \lbb^{2N +2}.
	\end{align}
	
	It remains to prove \eqref{est-thq}.
This can be done by using \eqref{b-beta-ve}, \eqref{larsrp} and Lemmas~ \ref{Lem-gk-esti} and \ref{mae-endpt1}:
		\begin{align} \label{ver-thq-est}
	\norm{\p_t^M\thq }_{C_tC_{x}^N }
		&\leq \sigma^{-1}\sum_{k\in \Lambda} \left\|\p_t^M\int_0^t  \(h_{(k)} (k\cdot\nabla)^2(\rho_{\ell}^{-1} a_{(k)}^2)\)ds\right\|_{C_tC_{x}^N }\notag\\
		& \lesssim \sigma^{-1} \sum_{k \in \Lambda} \|h_{(k)}\|_{C_t} \| \rho_\ell^{-1}a^2_{(k)}\|_{C_{t}C_x^{N+2}} \notag \\
		& \lesssim \sigma^{-1} \sum_{k \in \Lambda} \|h_{(k)}\|_{C_t}
		\sum_{1\leq N'\leq N+2}\| \rho_\ell^{-1} \|_{C_tC_{x}^{N+2-N'}}\| a^2_{(k)}\|_{C_tC_{x}^{N'}} \notag \\
		&\quad + \sigma^{-1} \| \rho_\ell^{-1} \|_{C_tC_{x}^{N+2}}\| a^2_{(k)}\|_{C_{t,x}}\notag\\
		& \lesssim \sigma^{-1}\sum_{0<N'\leq N+2} \ell^{-N-2+N'}\ell^{-5N'-1} + \sigma^{-1} \ell^{-N-4}\lesssim \ell^{-5N-11} \sigma^{-1}.
	\end{align}
Thus, \eqref{est-thq} is verified.
Therefore, the proof of Proposition~\ref{Prop-totalest} is complete.
\end{proof}

\subsection{Verification of inductive estimates for density and momentum perturbations}  \label{Subsec-induc-vel-mag}

We are now in stage to verify the inductive estimates \eqref{rhobd}-\eqref{mc1}
and \eqref{rhopre}-\eqref{m-L1tLace-conv}
for the density and momentum perturbations.

Let us first treat the density function $\rho_{q+1}$.
By \eqref{est-moll-rhobd}, \eqref{larsrp}, \eqref{def-rqq} and \eqref{est-thq},
\begin{align}  \label{rhoq1-bdd.1}
	\rqq \leq \rho_{\ell}+\|\thq\|_{C_{t,x}} \leq  C_2-\frac12\la^{-\beta}+\ell^{-11}\sigma^{-1}\leq C_2-\frac12\la^{-\beta}+\laq^{-14\ve}\leq C_2-\laq^{-\beta},
\end{align}
where we also used $\lbb_{q+1}^{-14\ve} + \lbb_{q+1}^{-\beta} \ll \frac12\lbb_q^{-\beta}$
in the last step,
due to \eqref{b-beta-ve}.
Similarly, we also have
\begin{align}  \label{rhoq1-bdd.2}
	\rqq \geq \rho_{\ell}-\|\thq\|_{C_{t,x}} \geq C_1+\frac12\la^{-\beta}-\ell^{-11}\sigma^{-1}\geq C_1+\frac12\la^{-\beta}-\laq^{-14\ve}\geq C_1+\laq^{-\beta}.
\end{align}
Thus,  \eqref{rhobd} is verified at level $q+1$.

By virtue of \eqref{zq1-def} and \eqref{def-rqq},
we also see that
\begin{align}  \label{rhoq1-mass}
\int_{\mathbb{T}^d}\rho_{q+1}(t,x)\d x= \int_{\mathbb{T}^d} \rho_\ell(t,x)\d x+ \int_{\mathbb{T}^d} z_{q+1}(t,x)\d x= \int_{\mathbb{T}^d} \rho_q(t,x)\d x,
\end{align}
for all $t\in [0,T]$,
which yields \eqref{rhopre} at level $q+1$.

Moreover, for $0\leq N\leq 4$, $0\leq M\leq 1$,
by \eqref{est-moll-rhoq} and \eqref{est-thq},
\begin{align}\label{rhoq1-cn}
\|\p_t^M \rqq\|_{C_tC^N_{x}}
 \lesssim& \|\p_t^M \rho_\ell\|_{C_tC^N_{x}}+\|\p_t^M \thq\|_{C_tC^N_{x}} \notag \\
 \lesssim& \lambda^{\frac{N\ve}{4}}_{q} +\ell^{-5N-11}\sigma^{-1}\lesssim \lambda_{q+1}^{\frac{N\ve}{4}}.
\end{align}
Thus, \eqref{rhoc1} at level $q+1$ is verified.

We also derive from  \eqref{est-moll-rl-rq} and \eqref{est-thq} that
\begin{align}
\|\rqq-\rho_q\|_{C_tC^1_x}& \lesssim \|\thq\|_{C_tC^1_x}+\|\rho_\ell-\rho_q\|_{C_tC^1_x}\notag\\
 &\lesssim \ell^{-16}\sigma^{-1}+\ell \la^\ve\leq \delta_{q+2}^{\frac12},
\end{align}
where the last step was due to  \eqref{b-beta-ve}, \eqref{e3.1} and \eqref{l-lbbq}.
This gives \eqref{rho-l9-conv}.

Now, we turn to the momentum field.
First,  by virtue of \eqref{mc1}, \eqref{q+1 velocity} and \eqref{principal c1 est},
we derive
\begin{align}\label{ver-mq1-cn}
		\norm{\mq}_{C^N_{t,x}}
         \leq&  \norm{  m_\ell}_{C^N_{t,x}}+ \norm{w_{q+1}}_{C^N_{t,x}} \notag \\
        \lesssim& \lambda_q^{2N+2}+\lambda_{q+1}^{2N+2} \lesssim \lambda_{q+1}^{2N+2},
\end{align}
which yields \eqref{mc1} with $q+1$ replacing $q$.

In order to prove the $L^2_tC_x$-decay estimate \eqref{m-L2t-conv},
we recall from \cite[Lemma 2.4]{cl21} (see also \cite[Lemma 3.7]{bv19b}) the key $L^p$-decorrelation.

\begin{lemma} [$L^p$-decorrelation, \cite{cl21} Lemma 2.4, \cite{bv19b} Lemma 3.7]   \label{Decorrelation1}
	Let $\sigma\in \mathbb{N}$ and $f,g:\mathbb{T}^N\rightarrow \R$ be smooth functions. Then for every $p\in[1,\infty]$,
	\begin{equation}\label{lpdecor}
		\big|\|fg(\sigma\cdot)\|_{L^p(\T^N)}-\|f\|_{L^p(\T^N)}\|g\|_{L^p(\T^N)} \big|
        \lesssim \sigma^{-\frac{1}{p}}\|f\|_{C^1(\T^N)}\|g\|_{L^p(\T^N)}.
	\end{equation}
\end{lemma}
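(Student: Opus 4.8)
The plan is to exploit the scale separation between $f$, which varies on the unit scale, and $g(\sigma\,\cdot)$, which is $\sigma^{-1}$-periodic because $\sigma\in\mathbb{N}$. First I would partition $\T^N$ into $\sigma^N$ congruent cubes $\{Q_j\}_j$ of sidelength $\sim\sigma^{-1}$, and fix a point $x_j\in Q_j$ for each $j$. Since $g$ is $\T^N$-periodic and $\sigma\in\mathbb{N}$, the change of variables $y=\sigma x$ maps each $Q_j$ onto a full copy of $\T^N$, so that
\[
\int_{Q_j}|g(\sigma x)|^p\,\d x=\sigma^{-N}\|g\|_{L^p(\T^N)}^p
\]
\emph{exactly}, with no remainder. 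This identity, which is exactly where the hypothesis $\sigma\in\mathbb{N}$ enters, is the source of the gain.

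Next I would freeze $f$ on each cube. From the elementary bound $\big||a|^p-|b|^p\big|\le p\max(|a|,|b|)^{p-1}|a-b|$ and $|f(x)-f(x_j)|\le C_N\sigma^{-1}\|f\|_{C^1}$ for $x\in Q_j$, summing over $j$ and using the exact identity above to evaluate $\int_{Q_j}|g(\sigma\cdot)|^p$ gives
\[
\Big|\ \|fg(\sigma\cdot)\|_{L^p}^p-\sigma^{-N}\|g\|_{L^p}^p\sum_j|f(x_j)|^p\ \Big|\lesssim_{p,N}\sigma^{-1}\|f\|_{C^1}^p\,\|g\|_{L^p}^p .
\]
The Riemann sum $\sigma^{-N}\sum_j|f(x_j)|^p$ differs from $\int_{\T^N}|f|^p=\|f\|_{L^p}^p$ by at most $C_{p,N}\sigma^{-1}\|f\|_{C^1}^p$, again by the same $C^1$-oscillation estimate applied to $|f|^p$. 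Combining the two facts yields
\[
\Big|\ \|fg(\sigma\cdot)\|_{L^p}^p-\|f\|_{L^p}^p\|g\|_{L^p}^p\ \Big|\lesssim_{p,N}\sigma^{-1}\|f\|_{C^1}^p\,\|g\|_{L^p}^p .
\]

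Finally I would take $p$-th roots, using $|a^{1/p}-b^{1/p}|\le|a-b|^{1/p}$ for $a,b\ge0$ and $p\ge1$ (which is just $\ell^p\subseteq\ell^1$), applied with $a=\|fg(\sigma\cdot)\|_{L^p}^p$ and $b=\|f\|_{L^p}^p\|g\|_{L^p}^p$; the factor $\sigma^{-1/p}$ then appears from $(\sigma^{-1})^{1/p}$, giving exactly \eqref{lpdecor}. The endpoint $p=\infty$ is treated separately but is immediate: there $\sigma^{-1/p}=1$, and both $\|fg(\sigma\cdot)\|_{L^\infty}$ and $\|f\|_{L^\infty}\|g\|_{L^\infty}$ are bounded by $\|f\|_{C^1}\|g\|_{L^\infty}$, so the inequality holds trivially.

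As for difficulty: there is no real obstacle — this is the standard slow/fast decorrelation estimate, and the argument in \cite{cl21,bv19b} proceeds along these lines. The only points needing care are (a) the hypothesis $\sigma\in\mathbb{N}$, which is what makes the cubes tile $\T^N$ and each $Q_j$ carry an exact period of $g(\sigma\cdot)$, so that $\int_{Q_j}|g(\sigma\cdot)|^p$ is computed with no error; and (b) the $p$-dependence of the constants introduced by working with $|\cdot|^p$, which is harmless since $p$ is fixed (indeed $p\in\{1,2\}$) in all our applications and can be absorbed into the implicit constant. One could alternatively prove the case $p=1$ first and note that the general case is identical, but the argument above covers all $p\in[1,\infty]$ at once.
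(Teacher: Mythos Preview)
The paper does not prove this lemma; it merely recalls it from \cite{cl21,bv19b}. Your argument is correct and is precisely the standard one given in those references: partition $\T^N$ into $\sigma^N$ fundamental cells of $g(\sigma\cdot)$, freeze $f$ on each cell via the $C^1$ bound, use exact periodicity to evaluate $\int_{Q_j}|g(\sigma\cdot)|^p$, compare the resulting Riemann sum to $\|f\|_{L^p}^p$, and take $p$-th roots via the subadditivity $|a^{1/p}-b^{1/p}|\le|a-b|^{1/p}$.
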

Applying the $L^p$-decorrelation Lemma~\ref{Decorrelation1} with   $N=1$,
$f= \|a_{(k)}(t,\cdot)\|_{L^\infty}$,
$g = \g$, $\sigma = \lambda^{15\ve}$
and using \eqref{la}, \eqref{b-beta-ve},
\eqref{mag amp estimates}, \eqref{pv}
and Lemmas \ref{buildingblockestlemma}, \ref{Lem-gk-esti} and \ref{mae-endpt1}
we get
\begin{align}
	\label{Lp decorr vel}
	\norm{w^{(p)}_{q+1}}_{L^2_{t}C_x}
    & \lesssim \sum\limits_{k\in \Lambda} \big\| \|a_{(k)}\|_{C_x} g_{(k)} \big\|_{L^2_t} \|W_{(k)}\|_{C_x} \notag \\
	&\lesssim \sum\limits_{k\in \Lambda}
       \Big(\|a_{(k)}\|_{L^2_{t}C_x}\norm{ \g }_{L^2_{t}} \norm{\phi_{(k)}}_{C_{x}}  +\sigma^{-\frac12}\|a_{(k)}\|_{C^1_{t,x}}\norm{ \g }_{L^2_{t}}\norm{\phi_{(k)}}_{C_{x}} \Big) \notag\\
	&\lesssim  \delta_{q+1}^{\frac{1}{2}}+\ell^{-5}\lambda^{-7\ve}_{q+1}   \lesssim \delta_{q+1}^{\frac{1}{2}}.
\end{align}
Then,
taking into account \eqref{b-beta-ve} and Proposition~\ref{Prop-totalest}
we deduce
\begin{align}  \label{e3.41.1}
	\norm{w_{q+1}}_{L^2_{t}C_x} &\lesssim\norm{w_{q+1}^{(p)} }_{L^2_{t}C_x} + \norm{ w_{q+1}^{(c)} }_{L^2_{t}C_x} +\norm{ \wo }_{L^2_{t}C_x}\notag \\
	&\lesssim \delta_{q+1}^{\frac{1}{2}} +\ell^{-5}\lbb^{-1}+\ell^{-7}\sigma^{-1}\lesssim \delta_{q+1}^{\frac{1}{2}}.
\end{align}
Moreover, an application of Proposition \ref{Prop-totalest} again gives
\begin{align}  \label{wql1.1}
	\norm{w_{q+1}}_{L^1_tC_x} &\lesssim\norm{w_{q+1}^{(p)} }_{L^1_tC_x} + \norm{ w_{q+1}^{(c)} }_{L^1_tC_x}  +\norm{ \wo }_{L^1_tC_x}\notag \\
	&\lesssim \ell^{-1}\tau^{-\frac12} +\ell^{-5}\lbb^{-1}\tau^{-\frac12} +\ell^{-7}\sigma^{-1}\lesssim \delta_{q+2}^{\frac{1}{2}}.
\end{align}
Thus, taking into account \eqref{est-moll-mc1}, \eqref{est-moll-ml-mq} and
\eqref{q+1 velocity}
we lead to
\begin{align}
 \norm{m_{q+1} - \m}_{L^2_{t}C_x} 	
     \leq& \norm{ m_\ell-\m}_{L^2_{t}C_x} + \norm{m_{q+1} -m_\ell}_{L^2_{t}C_x} \nonumber   \\
	\lesssim&  \norm{  m_\ell-\m }_{C_{t,x}}+ \norm{w_{q+1}}_{L^2_{t}C_x}  \nonumber  \\
	\lesssim& \ell\la^4+\delta_{q+1}^{\frac12}
    \lesssim \delta_{q+1}^{\frac{1}{2}}, \label{ver-mml2}
\end{align}
and
\begin{align} \label{ver-mml1}
\norm{m_{q+1} - \m}_{L^1_tC_x} 	
\lesssim&   \norm{ m_\ell-\m}_{L^1_{t}C_x} + \norm{m_{q+1} -m_\ell}_{L^1_{t}C_x} \nonumber   \\	
\lesssim&  \|m_\ell-\m\|_{C_{t,x}}+ \norm{w_{q+1}}_{L^1_{t}C_x} \notag \\
\lesssim&  \ell\la^4+\delta_{q+2}^{\frac12}
\lesssim \delta_{q+2}^{\frac{1}{2}}.
\end{align}

Regarding the $L^p_tC^s_x$-estimate of the momentum perturbation,
using interpolation and Proposition~\ref{Prop-totalest} we get
\begin{align}\label{est-wp-lgce}
\norm{w_{q+1}^{(p)}}_{L^p_tC^s_x}
& \lesssim \norm{w_{q+1}^{(p)}}_{L^p_tC_x}^{1-s}\norm{w_{q+1}^{(p)}}_{L^p_tC^1_x}^s \notag\\
&\lesssim (\ell^{-1} \tau^{\frac12-\frac1p}  )^{1-s} (\ell^{-1}\lambda \tau^{\frac12-\frac1p} )^s\notag\\
&\lesssim \ell^{-1}\lambda^s \tau^{\frac12-\frac1p} \lesssim \lambda^{\a-\frac{2\a}{p} +s+\ve(\frac{10}{p}-4)},
\end{align}
and, similarly,
\begin{align}
	& \norm{w_{q+1}^{(c)}}_{L^p_tC^s_x} \lesssim \ell^{-1}\lambda^{s-1} \tau^{\frac12-\frac1p} \lesssim \lambda^{\a-1-\frac{2\a}{p} +s+\ve(\frac{10}{p}-4)},\label{est-wc-lgce}\\
	& \norm{w_{q+1}^{(o)}}_{L^p_tC^s_x} \lesssim \ell^{-7-5s}\sigma^{-1}\lesssim \ell^{-12}\lambda^{-15\ve}.\label{est-wo-lgce}
\end{align}
Taking into account \eqref{est-moll-ml-mq} and \eqref{q+1 velocity}
we get
\begin{align}\label{est-mq1-mq-lgce}
\norm{m_{q+1} - \m}_{L^p_tC^s_x}
& \lesssim  \norm{  m_\ell-\m }_{L^p_tC^s_x}+ \norm{w_{q+1}}_{L^p_tC^s_x} \notag\\
&\lesssim \norm{  m_\ell-\m }_{L^p_tC_x}^{1-s} \norm{  m_\ell-\m }_{L^p_tC^1_x}^s+ \norm{w_{q+1}}_{L^p_tC^s_x} \notag\\
&\lesssim (\ell \la^4)^{1-s}(\ell \la^6)^s+ \lambda^{\a-\frac{2\a}{p} +s+\ve(\frac{10}{p}-4)}+\ell^{-12}\lambda^{-15\ve} \notag\\
&\lesssim \ell \la^{4+2s}+ \lambda^{\a-\frac{2\a}{p} +s+\ve(\frac{10}{p}-4)}+\ell^{-12}\lambda^{-15\ve},
\end{align}
which along with \eqref{b-beta-ve} and \eqref{e3.1} yields that
\begin{align}\label{mq1-mq-lgce-end}
	\norm{m_{q+1} - \m}_{L^p_tC^s_x}
     \lesssim \delta_{q+2}^{\frac12}.
\end{align}

Finally, concerning the $C_tH^{-1}_x$-estimate of the momentum,
by Lemmas~\ref{buildingblockestlemma}, \ref{mae-endpt1}, \eqref{mc1} \eqref{div free velocity} and the standard mollification estimates,
\begin{align}
\norm{m_{q+1} - \m}_{C_tH^{-1}_x}
& \lesssim  \norm{  m_\ell-\m }_{C_tH^{-1}_x}+ \norm{w_{q+1}}_{C_tH^{-1}_x} \notag\\
&\lesssim \norm{  m_\ell-\m }_{C_{t,x}} + \norm{w_{q+1}^{(p)}+w_{q+1}^{(c)}}_{C_tH^{-1}_x}+\norm{w_{q+1}^{(o)}}_{C_tL^2_x} \notag\\
&\lesssim \ell \|m_q\|_{C_{t,x}^1}+\sum_{k \in \Lambda}\| \g a_{(k)} W^c_{(k)}\|_{C_tL^2_x}+ \norm{w_{q+1}^{(o)}}_{C_tL^2_x} \notag\\
&\lesssim \ell \la^{4}+ \sum_{k \in \Lambda}\|a_{(k)}\|_{C_{t,x}} \| \g\|_{C_t} \|W^c_{(k)}\|_{L^2_x}+ \ell^{-7}\sigma^{-1}\notag\\
&\lesssim \la^{-26}+ \ell^{-1}\lambda^{\a-1} +\la^{-14\ve}\lesssim \delta_{q+2}^{\frac12},
\end{align}
where in the last inequality we also used \eqref{b-beta-ve} and \eqref{e3.1}.

Therefore, the iterative estimates \eqref{rhobd}-\eqref{mc1}
and \eqref{rhopre}-\eqref{m-L1tLace-conv} are verified
at level $q+1$.

\begin{remark}\label{rem-3.10}
In the incompressible case,
the density perturbation $z_{q+1}$ is no longer needed,
and the choice of $a_{(k)}$ and $w_{q+1}^{(o)}$ given by \eqref{akb-1} and \eqref{wo-1},
respectively,
does not change the intermittent estimates \eqref{uprinlp-endpt1}-\eqref{principal c1 est}.
Thus, the iterative estimates \eqref{mc1}, \eqref{m-L2t-conv} and \eqref{m-L1tLace-conv} can be verified
in a similar manner as above.
\end{remark}

\section{Reynolds stress}   \label{Sec-Rey-Endpt1}

The main purpose of this section is to
treat the delicate Reynolds stress and
verify the corresponding inductive estimates \eqref{rc1} and \eqref{rl1} at level $q+1$.

Unlike the incompressible case,
because of the  presence of the density,
the analysis of Reynolds stress for CNS is rather involving.
The corresponding pressure term should also be treated.
All these would require specific analysis of the density of fluid.

One key ingredient in the construction is  the inverse-divergence operator $\mathcal{R}$ introduced in \cite{cl20.2}
\begin{align} \label{calR-def}
	&(\mathcal{R} v)_{i j}=\mathcal{R}_{i j k} v_k,
\end{align}	
where
\begin{align*}
	&\mathcal{R}_{i j k}=\frac{2-d}{d-1} \Delta^{-2} \partial_i \partial_j \partial_k+\frac{-1}{d-1} \delta_{i j}\Delta^{-1} \partial_k +\Delta^{-1} \delta_{j k}\partial_i +\Delta^{-1}\delta_{i k} \partial_j ,
\end{align*}
and $v\in C^\9(\T^d)$.

The  inverse-divergence operator $\mathcal{R}$ maps smooth functions to symmetric and trace-free matrices
and satisfies the algebraic identity
\begin{align}\label{r-iden}
	\div \mathcal{R}(v) = v
\end{align}
for mean free smooth function $v$. See \cite{cl20.2} for more details.
We note that $\mathcal{R} \nabla$ and $\mathcal{R} \div$ are Calder\'{o}n-Zygmund operators,
which are bounded operators on periodic H\"older spaces (see, e.g., \cite{cz54}).

\subsection{Decomposition of Reynolds stress}
By virtue of \eqref{q+1 velocity} and \eqref{equa-me}, we derive that
\begin{align}  \label{ru}
	 \div {R}_{q+1}
		&  =\partial_t (w_{q+1}^{(p)}+w_{q+1}^{(c)}) +\mu(-\Delta)^{\alpha}\( \rho_\ell^{-1}w_{q+1} +( \rho_{q+1}^{-1}- \rho_\ell^{-1})m_\ell+ ( \rho_{q+1}^{-1}- \rho_\ell^{-1})w_{q+1}\) \notag\\
		& \quad-(\mu+\nu)\nabla\div\( \rho_\ell^{-1}w_{q+1} +(\rho_{q+1}^{-1}-\rho_\ell^{-1})m_\ell+ (\rho_{q+1}^{-1}-\rho_\ell^{-1})w_{q+1}\) \notag\\
		&\quad \underbrace{+\div\( \rho_{q+1}^{-1}(m_\ell\otimes w_{q+1}+w_{q+1}\otimes m_\ell)+( \rho_{q+1}^{-1}-\rho_\ell^{-1}) m_\ell\otimes m_\ell \)\hspace{2.2cm} }_{ \div R_{lin}} \notag\\
		& \quad+ \underbrace{\div (( \rho_{q+1}^{-1}-\rho_\ell^{-1})w_{q+1}\otimes w_{q+1}+\rho_\ell^{-1}w_{q+1}^{(p)} \otimes w_{q+1}^{(p)}+  R_\ell)+\partial_t \wo}_{ \div R_{osc}} \notag\\
		& \quad +\underbrace{ \div\Big(\rho_{\ell}^{-1}(w_{q+1}^{(c)}+\wo)\otimes w_{q+1}+ \rho_{\ell}^{-1}w_{q+1}^{(p)} \otimes (w_{q+1}^{(c)}+\wo) \Big)}_{ \div R_{cor}}\notag\\
		&\quad +\underbrace{\nabla P(\rho_{q+1}) -\nabla P(\rho_\ell)}_{ \div R_{pre}}
        +\div R_{com},
\end{align}
where $R_{com}$ is the commutator  stress given by \eqref{def-rcom}.

Applying the inverse-divergence operator $\mathcal{R}$,
we may choose a new Reynolds stress at level $q+1$
composed of five parts
\begin{align}\label{rucom}
R_{q+1} := R_{lin} +  R_{osc}+R_{cor}+ R_{pre}+R_{com},
\end{align}
where the linear error
\begin{align}   \label{rup}
R_{lin} & :=\mathcal{R} \partial_t (w_{q+1}^{(p)}+w_{q+1}^{(c)}) +\mu\mathcal{R}(-\Delta)^{\alpha}\(\rho_\ell^{-1}w_{q+1} +( \rho_{q+1}^{-1}- \rho_\ell^{-1})m_\ell+ ( \rho_{q+1}^{-1}- \rho_\ell^{-1})w_{q+1}\) \notag\\
	& \quad-(\mu+\nu)\mathcal{R}\nabla\div\( \rho_\ell^{-1}w_{q+1} +(\rho_{q+1}^{-1}-\rho_\ell^{-1})m_\ell+ (\rho_{q+1}^{-1}-\rho_\ell^{-1})w_{q+1}\) \notag\\
	&\quad+\mathcal{R}\div\( \rho_{q+1}^{-1}(m_\ell\otimes w_{q+1}+w_{q+1}\otimes m_\ell)+( \rho_{q+1}^{-1}-\rho_\ell^{-1})m_\ell\otimes m_\ell \),
\end{align}
the oscillation error
\begin{align}\label{rou}
	R_{osc}
   :=&\ \mathcal{R}\div( ( \rho_{q+1}^{-1}-\rho_\ell^{-1})w_{q+1}\otimes w_{q+1})
   + \sum_{k \in \Lambda } \mathcal{R} \P_{\neq 0}\left(\g^2 \P_{\neq 0}(W_{(k)}\otimes W_{(k)})
   \nabla (\rho_{\ell}^{-1} a_{(k)}^2)\right) \notag\\
	&-\sigma^{-1} \sum_{k\in\Lambda} h_{(k)} \p_t\mathcal{R}((k\cdot\nabla)(\rho_\ell^{-1}a_{(k)}^2 )k),
\end{align}
 the corrector error
\begin{align}  \label{rup2}
	 {R}_{cor} &
	:= \mathcal{R}  \div\Big(\rho_{\ell}^{-1}(w_{q+1}^{(c)}+\wo)\otimes w_{q+1}+ \rho_{\ell}^{-1}w_{q+1}^{(p)} \otimes (w_{q+1}^{(c)}+\wo) \Big),
\end{align}
the pressure error
\begin{align}  \label{rpre}
	R_{pre} &
	:= \mathcal{R} \nabla ( P(\rho_{q+1}) -  P(\rho_\ell)),
\end{align}
and $R_{com}$ is the commutator error given by \eqref{def-rcom}.

In view of \eqref{ru},
the algebraic identities \eqref{oscillation cancellation calculation}, \eqref{utemcom}
and the fact that $\div W_{(k)} \otimes W_{(k)} =0$,
it holds that
\begin{align} \label{calRuPHdiv-Ru}
    R_{q+1}  = \mathcal{R}\div R_{q+1}.
\end{align}

\subsection{Verification of $C_{t,x}^1$-estimate of Reynolds stress}
Below we verify the $C_{t, x}^1$-estimate \eqref{rc1} of $\rr_{q+1}$.
By the decomposition identity \eqref{calRuPHdiv-Ru},
Sobolev's embedding $W^{1,d+1}_x\hookrightarrow C_x$,
equations \eqref{equa-nsr} at level $q+1$ and
the fact that $\mathcal{R}\sim |\na|^{-1}$ in $W^{s,p}$ spaces,
\begin{align} \label{Rq+1-Ctx}
	\norm{ R_{q+1}}_{C_tC^1_x}
	& \lesssim \norm{\mathcal{R}  (\div R_{q+1})}_{C_tW^{2,d+1}_x}\notag\\
	&\lesssim \norm{\partial_t m_{q+1}}_{C_tW^{1,d+1}_x}
          +\norm{\mu (-\Delta)^{\alpha}(\rho_{q+1}^{-1} m_{q+1})}_{C_tW^{1,d+1}_x}
          +\norm{(\mu+\nu)\nabla\div(\rho_{q+1}^{-1} m_{q+1})}_{C_tW^{1,d+1}_x} \notag \\
	 &\quad+\norm{\div(\rho_{q+1}^{-1}m_{q+1}\otimes m_{q+1} )}_{C_tW^{1,d+1}_x} +\norm{\nabla P(\rho_{q+1}) }_{C_tW^{1,d+1}_x}.
\end{align}
Note that, by the interpolation,
\begin{align} \label{Delta-rhom-esti}
	   \norm{\mu (-\Delta)^{\alpha}(\rho_{q+1}^{-1} m_{q+1})}_{C_tW^{1,d+1}_x}
\lesssim&   \norm{\rho_{q+1}^{-1}m_{q+1}}_{C_tC_x}^{\frac{3-2\a}{4}} \norm{\rho_{q+1}^{-1}m_{q+1}}_{C_tW^{4,\9}_x}^{\frac{2\a+1}{4}}  \notag \\
\lesssim&  \norm{m_{q+1}}_{C_{t,x}}^{\frac{3-2\a}{4}} (\norm{\rho_{q+1}^{-1}}_{C_tC^4_{x}}\norm{ m_{q+1}}_{C^4_{t,x}})^{\frac{2\a+1}{4}}.
\end{align}
Moreover,
\begin{align}
   \norm{(\mu+\nu)\nabla\div(\rho_{q+1}^{-1} m_{q+1})}_{C_tW^{1,d+1}_x}
   \lesssim  \norm{\rho_{q+1}^{-1}m_{q+1}}_{C_tC^3_x}
   \lesssim \norm{\rho_{q+1}^{-1} }_{C_tC^3_x}\norm{ m_{q+1}}_{C_tC^3_x},
\end{align}
and
\begin{align} \label{div-rhom}
	\norm{\div(\rho_{q+1}^{-1}m_{q+1}\otimes m_{q+1} )}_{C_tW^{1,d+1}_x}
    \lesssim& \norm{\rho_{q+1}^{-1}m_{q+1}\otimes m_{q+1}}_{C_tC^{2}_x} \notag \\
    \lesssim& \norm{\rho_{q+1}^{-1} }_{C_tC^2_x}\sum_{N_1+N_2=2}\norm{m_{q+1}}_{C_tC^{N_1}_x}\norm{ m_{q+1}}_{C_tC^{N_2}_x}.
\end{align}
Thus, plugging \eqref{Delta-rhom-esti}-\eqref{div-rhom} into \eqref{Rq+1-Ctx}
we obtain
\begin{align}   \label{ver-rc1.1}
   \norm{ R_{q+1}}_{C_tC^1_x}
   \lesssim& \norm{\partial_t m_{q+1}}_{C_tW^{1,d+1}_x}
            + \norm{m_{q+1}}_{C_{t,x}}^{\frac{3-2\a}{4}} (\norm{\rho_{q+1}^{-1}}_{C_tC^4_{x}}\norm{ m_{q+1}}_{C^4_{t,x}})^{\frac{2\a+1}{4}} \notag \\
           & + \norm{\rho_{q+1}^{-1} }_{C_tC^3_x}\norm{ m_{q+1}}_{C_tC^3_x}
            + \norm{\rho_{q+1}^{-1} }_{C_tC^2_x}\sum_{N_1+N_2=2}\norm{m_{q+1}}_{C_tC^{N_1}_x}\norm{ m_{q+1}}_{C_tC^{N_2}_x} \notag \\
           & +\norm{\nabla P(\rho_{q+1}) }_{C_tC^1_x}.
\end{align}

Using \eqref{rhoq1-bdd.1}, \eqref{rhoq1-bdd.2} and \eqref{rhoq1-cn}, for $0\leq N\leq 4$ and $0\leq M\leq 1$, we have
\begin{align}\label{rhoq1-1-bd}
\|\p_t^{M}\rho^{-1}_{q+1}\|_{C_tC^N_{x}}\lesssim \lambda_{q+1}^{\frac{N\ve}{4}}.
\end{align}
Moreover,
\begin{align}\label{p-1-bd}
	\|\nabla P(\rho_{q+1})\|_{C_tC^1_x}
    \lesssim  \|P'(\rho_{q+1})\|_{C_{t,x}} \|\rho_{q+1}  \|_{C_tC_x^2}
     + \|P''(\rho_{q+1})\|_{C_{t,x}} \|\rho_{q+1}  \|_{C_tC_x^1}^2.
\end{align}
Since by \eqref{p-bound}, $P'$, $P''$ are continuous,
and by \eqref{rhoq1-bdd.1} and \eqref{rhoq1-bdd.2},
$\rho_{q+1}$ is uniformly bounded away from zero and infinity,
we have
\begin{align} \label{P'-P''-bdd}
   \|P'(\rho_{q+1})\|_{C_{t,x}}
   +  \|P''(\rho_{q+1})\|_{C_{t,x}}
   \lesssim 1,
\end{align}
where the implicit constant is independent of $q$.
Then, in view of \eqref{rhoq1-cn} we get
\begin{align} \label{naP-Ctx-esti}
   \|\na P(\rho_{q+1})\|_{C_tC_x^1} \lesssim \lbb_{q+1}^\ve.
\end{align}
Taking into account \eqref{ver-mq1-cn} and \eqref{rhoq1-cn} we get
\begin{align}  \label{Rq1-CtC1x}
\norm{R_{q+1}}_{C_tC^1_x}
&\lesssim  \lambda_{q+1}^{6}+ \lambda_{q+1}^{\frac{3-2\a}{2}}(\lambda_{q+1}^\ve \cdot \lambda_{q+1}^{10})^{\frac{2\a+1}{4}} +\lambda_{q+1}^{\ve} \cdot\lambda_{q+1}^{8}+ \lambda_{q+1}^{\ve}
\lesssim \lambda_{q+1}^{9}.
\end{align}

Similarly, we deduce
\begin{align}\label{ver-rc1.2}
	\norm{\partial_t R_{q+1}}_{C_{t,x}}
	&\lesssim \norm{\partial_t^2 m_{q+1}}_{C_tL^{\9}_x}+\norm{\mu\partial_t (-\Delta)^{\alpha}(\rho_{q+1}^{-1} m_{q+1})}_{C_tL^{\9}_x} +\norm{(\mu+\nu)\partial_t\nabla\div(\rho_{q+1}^{-1} m_{q+1})}_{C_tL^{\9}_x}\notag\\
	&\quad+\norm{\partial_t\div(\rho_{q+1}^{-1}m_{q+1}\otimes m_{q+1} )}_{C_tL^{\9}_x} +\norm{\partial_t\nabla P(\rho_{q+1}) }_{C_tL^{\9}_x}\notag\\
   &\lesssim \norm{ m_{q+1}}_{{C^2_{t,x}}}+ \norm{ (-\Delta)^{\alpha}(\p_t \rho_{q+1}^{-1} m_{q+1})}_{C_{t,x}}+ \norm{ (-\Delta)^{\alpha}(\rho_{q+1}^{-1}\p_t  m_{q+1})}_{C_{t,x}}\notag\\
  &\quad	+\sum_{M_1+M_2=1}\norm{\p_t^{M_1}\rho_{q+1}^{-1} }_{C_tC^2_{x}}\norm{ \p_t^{M_2}m_{q+1}}_{C_tC^2_{x}}+\norm{\rho_{q+1}^{-1} }_{C_tC^2_{x}}\sum_{N_1+N_2=2}\norm{m_{q+1}}_{ C^{N_1}_{t,x}}\norm{ m_{q+1}}_{ C^{N_2}_{t,x}} \notag\\
	&\quad+\|P'(\rho_{q+1})\|_{C_{t,x}}\|\p_t\nabla\rho_{q+1} \|_{C_{t,x}}+\|P''(\rho_{q+1})\|_{C_{t,x}}\|\nabla \rho_{q+1} \|_{C_{t,x}}\| \p_t\rho_{q+1} \|_{C_{t,x}}.
\end{align}
Concerning the shear viscosity error, using interpolation,
\eqref{ver-mq1-cn} and \eqref{rhoq1-1-bd} we get
\begin{align}\label{ver-sv-cn1}
\norm{ (-\Delta)^{\alpha}(\p_t \rho_{q+1}^{-1} m_{q+1})}_{C_{t,x}}& \lesssim \norm{\p_t \rho_{q+1}^{-1}m_{q+1}}_{C_{t,x}}^{1-\a} \norm{\p_t\rho_{q+1}^{-1}m_{q+1}}_{C_{x}C^2_x}^{\a} \notag\\
&\lesssim ( \norm{\p_t \rho_{q+1}^{-1}}_{C_{t,x}}\norm{m_{q+1}}_{C_{t,x}})^{1-\a} ( \norm{\p_t\rho_{q+1}^{-1}}_{C_{t}C^2_x}\norm{ m_{q+1}}_{C_{t}C^2_x})^{\a} \notag\\
&\lesssim (\lambda_{q+1}^{2+\ve} )^{1-\a} ( \lambda_{q+1}^{6+\ve} )^{\a} \lesssim \lambda_{q+1}^7.
\end{align}
In a similar manner, we estimate
\begin{align}\label{ver-sv-cn2}
	\norm{ (-\Delta)^{\alpha}(\rho_{q+1}^{-1} \p_t m_{q+1})}_{C_{t,x}}
	&\lesssim ( \norm{\rho_{q+1}^{-1}}_{C_{t,x}}\norm{\p_t m_{q+1}}_{C_{t,x}})^{1-\a} ( \norm{\rho_{q+1}^{-1}}_{C_{t}C^2_x}\norm{ \p_tm_{q+1}}_{C_{t}C^2_x})^{\a} \notag\\
	&\lesssim (\lambda_{q+1}^{4+\ve} )^{1-\a} ( \lambda_{q+1}^{8+\ve} )^{\a} \lesssim \lambda_{q+1}^9.
\end{align}
Thus, taking into account \eqref{rhoq1-cn}, \eqref{ver-mq1-cn} and \eqref{rhoq1-1-bd}
we conclude that
\begin{align} \label{ptRq1-Ctx}
	\norm{\p_t R_{q+1}}_{C_{t,x}}
	\lesssim& \lambda_{q+1}^6+ \lambda_{q+1}^{7}+ \lambda_{q+1}^{9} +\sum_{M_1+M_2=1}\laq^{\frac{3\ve}{4}} \laq^{6+2M_1}   \notag \\
      &+  \laq^{\frac{\ve}{2}} \sum_{N_1+N_2=2}\laq^{2N_1+2} \laq^{2N_2+2} + \laq^\ve \notag \\
    \lesssim&  \lambda_{q+1}^{9}  .
\end{align}
Therefore, it follows from \eqref{Rq1-CtC1x} and \eqref{ptRq1-Ctx}
that the $C_{t,x}^1$-estimate \eqref{rc1} is verified at level $q+1$.

\subsection{Verification of $L^1_tC_x$-decay of Reynolds stress}

We aim to prove the crucial $L^1_tC_x$-decay estimate \eqref{rl1} at level $q+1$.
Because Calder\'{o}n-Zygmund operators are bounded in H\"{o}lder
spaces $C^\ve_x$, where $0<\ve<1$ can be chosen as in \eqref{e3.1},
we would estimate the new Reynolds stress in the space $L^1_tC^\ve_x$
rather than $L^1_tC_x$.

Below we deal with the five parts
given by \eqref{rucom} of the Reynolds stress separately.
\medskip

\paragraph{\bf (i) Linear error $R_{lin}$.}
Recall from \eqref{rup} that
\begin{align}
R_{lin} & =\mathcal{R} \partial_t (w_{q+1}^{(p)}+w_{q+1}^{(c)}) +\mu\mathcal{R}(-\Delta)^{\alpha}\(\rho_\ell^{-1}w_{q+1} +( \rho_{q+1}^{-1}- \rho_\ell^{-1})m_\ell+ ( \rho_{q+1}^{-1}- \rho_\ell^{-1})w_{q+1}\) \notag\\
	& \quad-(\mu+\nu)\mathcal{R}\nabla\div\( \rho_\ell^{-1}w_{q+1} +(\rho_{q+1}^{-1}-\rho_\ell^{-1})m_\ell+ (\rho_{q+1}^{-1}-\rho_\ell^{-1})w_{q+1}\) \notag\\
	&\quad+\mathcal{R}\div\( \rho_{q+1}^{-1}(m_\ell\otimes w_{q+1}+w_{q+1}\otimes m_\ell)+( \rho_{q+1}^{-1}-\rho_\ell^{-1})m_\ell\otimes m_\ell \)    \notag \\
  &=: R_{lin,1} + R_{lin,2} + R_{lin,3} + R_{lin,4}.
\end{align}

\paragraph{\bf Estimate of $R_{lin,1}$}
First, in view of Lemmas \ref{buildingblockestlemma}, \ref{Lem-gk-esti} and \ref{mae-endpt1}, \eqref{e3.1},  \eqref{larsrp},
\eqref{div free velocity} and interpolation, we have
\begin{align}
	\|R_{lin,1}\|_{L_t^1C^\ve_x}
    \lesssim& \sum_{k \in \Lambda}\| \mathcal{R} \div\partial_t(\g a_{(k)} W^c_{(k)}) \|_{L_t^1C^\ve_x} \nonumber \\
	\lesssim& \sum_{k \in \Lambda} (\| \g\|_{L^1_t}\|  a_{(k)} \|_{C_{t,x}^2} +\| \p_t\g\|_{L_t^1}\| a_{(k)} \|_{C_{t,x}^1})\| W^c_{(k)} \|_{C^\ve_x} \nonumber \\
		\lesssim& \sum_{k \in \Lambda} (\| \g\|_{L^1_t}\|  a_{(k)} \|_{C_{t,x}^2}+\| \p_t\g\|_{L_t^1}\| a_{(k)} \|_{C_{t,x}^1})\|W^c_{(k)} \|_{C_{x}}^{1-\ve}\|  W^c_{(k)} \|_{C^1_x}^\ve \nonumber \\
    \lesssim& (\ell^{-10}\tau^{-\frac12}
        + \ell^{-5}\sigma\tau^{\frac12})\lambda^{-1+\ve}   \notag \\
	\lesssim& \ell^{-10}(\lambda^{-\a-1+6\ve }+\lambda^{\a-1+11\ve} )
     \lesssim \ell^{-10}\lambda^{-9\ve},    \label{time derivative}
\end{align}
where we also used $\alpha-1 \leq -20 \ve$, due to \eqref{e3.1}.

\paragraph{\bf Estimate of $R_{lin,2}$}
Regarding the shear viscosity error,
we consider the cases where $\a\in (0,1/2]$ and $\a\in (1/2,1)$ separately.
\medskip

{\bf The case where $\a\in (0,1/2]$.}
By  Proposition~\ref{Prop-totalest},
\begin{align}\label{est-lin-vis1}
 \|R_{lin,2}\|_{L_t^1C^\ve_x}
  \lesssim& \norm{  \rho_\ell^{-1}w_{q+1} +( \rho_{q+1}^{-1}- \rho_\ell^{-1}) m_\ell+ ( \rho_{q+1}^{-1}- \rho_\ell^{-1})w_{q+1} }_{L_t^1C^\ve_x}\notag\\
 \lesssim & \norm{  \rho_\ell^{-1}}_{C_tC^\ve_x}   \norm{  w_{q+1}}_{L_t^1C_x}
   +\norm{  \rho_\ell^{-1}}_{C_{t,x}} \norm{  w_{q+1}}_{L_t^1C^\ve_x}   \notag \\
   & +\norm{(\rho_{q+1}^{-1}-\rho_\ell^{-1})}_{C_tC^\ve_x}(\|m_\ell\|_{L_t^1C^\ve_x}+\|w_{q+1}\|_{L_t^1C^\ve_x} ).
\end{align}
Note that, by \eqref{def-rqq}, \eqref{est-thq}, \eqref{narho-1-lbbq} and \eqref{rhoq1-1-bd},
\begin{align}\label{est-rhoq1-rhol}
	\norm{( \rho_{q+1}^{-1}- \rho_\ell^{-1})}_{C_tC^N_{x}}
	\lesssim&  \norm{ \rho_{q+1}^{-1} }_{C_tC^N_{x}} \norm{ \rho_\ell^{-1}}_{C_tC^N_{x}}\norm{ \thq}_{C_tC^N_{x}} \notag \\
	\lesssim& \lambda^{\frac{N\ve}{4}}  \la^{\frac{N\ve}{4}} \ell^{-5N-11}\sigma^{-1} \leq \ell^{-5N-12}\lambda^{-14\ve} ,
\end{align}

Using \eqref{rhoq1-1-bd}, \eqref{narho-1-lbbq} and the interpolation inequality
we also get
\begin{align}
& \norm{ \rho_\ell^{-1}}_{C_tC^\ve_x} \lesssim \norm{ \rho_\ell^{-1}}_{C_{t,x}}^{1-\ve} \norm{ \rho_\ell^{-1}}_{C_tC^1_x}^\ve \lesssim  \la^{\frac{\ve^2}{4}},\label{e4.16}\\
& \norm{ \rho_{q+1}^{-1}}_{C_tC^\ve_x} \lesssim \norm{\rqe}_{C_{t,x}}^{1-\ve} \norm{\rqe}_{C_tC^1_x}^\ve\lesssim \lambda^{\frac{\ve^2}{4}},\label{e4.17}
\end{align}
and, via \eqref{est-moll-mc1},
\begin{align}
   \norm{ m_\ell }_{C_tC^\ve_x} \lesssim \norm{ m_\ell }_{C_{t,x}}^{1-\ve} \norm{ m_\ell }_{C_tC^1_x}^\ve \lesssim  \la^{4}.\label{est-ml-ve}
\end{align}
Moreover, by interpolation and \eqref{est-thq},
\begin{align}
\norm{ \thq}_{C_tC^\ve_x}
 \lesssim& \norm{\thq}_{C_{t,x}}^{1-\ve} \norm{\thq}_{C_tC^1_x}^\ve \notag \\
 \lesssim& \ell^{-5\ve-11}\sigma^{-1}\lesssim \ell^{-12}\lambda^{-15\ve},
\end{align}
and thus,
similarly to \eqref{est-rhoq1-rhol},
\begin{align}\label{est-rhoq1-rhol-cv}
\norm{( \rho_{q+1}^{-1}- \rho_\ell^{-1})}_{C_tC^\ve_x}
\lesssim&  \norm{ \rho_{q+1}^{-1} }_{C_tC^\ve_x} \norm{ \rho_\ell^{-1}}_{C_tC^\ve_x}\norm{ \thq}_{C_tC^\ve_x} \notag \\
\lesssim&  \la^{\frac{\ve^2}{4}} \lambda^{\frac{\ve^2}{4}} \ell^{-12}\lambda^{-15\ve} \leq \ell^{-12}\lambda^{-14\ve}.
\end{align}

Concerning the $L^1_tC_x^\ve$ estimate of $w_{q+1}$,
we have
\begin{align}\label{w-l1cve}
\|w_{q+1}\|_{L^1_tC_x^\ve} & \lesssim \|w_{q+1}^{(p)}\|_{L^1_tC_x^\ve}+ \|w_{q+1}^{(c)}\|_{L^1_tC_x^\ve}+ \|w_{q+1}^{(o)}\|_{L^1_tC_x^\ve}.
\end{align}
Using interpolation, Proposition~\ref{Prop-totalest},
\eqref{e3.1} and \eqref{larsrp} we get
\begin{align}\label{wp-l1cve}
\|w_{q+1}^{(p)}\|_{L^1_tC_x^\ve}\lesssim \|w_{q+1}^{(p)}\|_{L^1_tC_x}^{1-\ve}\|w_{q+1}^{(p)}\|_{L^1_tC_x^1}^\ve \lesssim \ell^{-1} \lambda^{\ve}\tau^{-\frac12} \lesssim \ell^{-1} \lambda^{-14\ve}.
\end{align}
Similarly, we have
\begin{align}
	\|w_{q+1}^{(c)}\|_{L^1_tC_x^\ve}\lesssim \ell^{-5} \lambda^{\ve-1}\tau^{-\frac12} \lesssim \ell^{-5} \lambda^{-1-14\ve},\label{wc-l1cve}\\
	\|w_{q+1}^{(o)}\|_{L^1_tC_x^\ve}\lesssim \ell^{-7-5\ve} \sigma^{-1} \lesssim \ell^{-12} \lambda^{-15\ve}.\label{wo-l1cve}
\end{align}
Hence, substituting \eqref{wp-l1cve}-\eqref{wo-l1cve} into \eqref{w-l1cve}
we obtain
\begin{align}\label{w-l1cve-end}
	\|w_{q+1}\|_{L^1_tC_x^\ve} & \lesssim \ell^{-1} \lambda^{-14\ve}+\ell^{-5} \lambda^{-1-14\ve}+\ell^{-12} \lambda^{-15\ve}\lesssim \ell^{-1} \lambda^{-14\ve}.
\end{align}

Thus, combining Proposition \ref{Prop-totalest},
\eqref{narho-1-lbbq}, \eqref{est-ml-ve}, \eqref{est-rhoq1-rhol-cv}, \eqref{w-l1cve-end}
together,
we conclude that for $\alpha \in (0,1/2]$,
\begin{align}\label{e4.19}
  \|R_{lin,2}\|_{L_t^1C^\ve_x}
 & \lesssim \lambda_{q}^{\frac{\ve^2}{4}} \ell^{-1}\tau^{-\frac12}+ \ell^{-1} \lambda^{-14\ve} +\ell^{-12}\lambda^{-14\ve}(\lambda_{q}^4+\ell^{-1} \lambda^{-14\ve})\notag\\
&\lesssim \ell^{-13}\lambda^{-14\ve},
\end{align}
where the last step is due to $\alpha\geq 20\ve$ and \eqref{larsrp}.

{\bf The case where $\a\in (1/2,1)$.}
Regarding the case where $\a\in (\frac12,1)$,
we first deduce
\begin{align}\label{est-lin-vis2}
	\|R_{lin,2}\|_{L_t^1C^\ve_x}
 \lesssim& \norm{ |\nabla|^{2\a-1}( \rho_\ell^{-1}w_{q+1})}_{L_t^1C^\ve_x} +\norm{ |\nabla|^{2\a-1}( ( \rho_{q+1}^{-1}- \rho_\ell^{-1}) m_\ell)}_{L_t^1C^\ve_x} \notag \\
         &+ \norm{ |\nabla|^{2\a-1} (( \rho_{q+1}^{-1}- \rho_\ell^{-1})w_{q+1}) }_{L_t^1C^\ve_x}.
\end{align}
For the first term on the right-hand side of \eqref{est-lin-vis2},
we use the interpolation inequality,
\eqref{b-beta-ve}, \eqref{larsrp}, \eqref{uprinlp-endpt1},
\eqref{narho-1-lbbq} and the fact that $1-\alpha \geq  20\va$ to derive
\begin{align}\label{est-lin-vis2-i1-p}
	\norm{ |\nabla|^{2\a-1} (\rho_\ell^{-1}w_{q+1}^{(p)})}_{L_t^1C^\ve_x}
	& \lesssim  \norm{\rho_\ell^{-1}w_{q+1}^{(p)}}_{L_t^1C_x} ^{1-\frac{2\a-1+\ve}{3}} \norm{\rho_\ell^{-1}w_{q+1}^{(p)}}_{L_t^1C^{3}_x} ^{\frac{2\a-1+\ve}{3}}\notag\\
	& \lesssim (\norm{\rho_\ell^{-1}}_{C_{t,x}}\norm{w_{q+1}^{(p)}}_{L_t^1C_x})^{1-\frac{2\a-1+\ve}{3}} ( \norm{\rho_\ell^{-1}}_{C_tC^{3}_x}\norm{w_{q+1}^{(p)}}_{L_t^1C^{3}_x} )^{\frac{2\a-1+\ve}{3} }\notag\\
	& \lesssim (\ell^{-1}\tau^{-\frac12})^{1-\frac{2\a-1+\ve}{3}} (\lambda_{q}^{\frac{3\ve}{4}}\ell^{-1}\lambda^3\tau^{-\frac12})^{\frac{2\a-1+\ve}{3}} \notag\\
	&\lesssim \ell^{-2} \tau^{-\frac12}\lambda^{2\a-1+\ve} \lesssim \ell^{-2} \lambda^{ -14\ve}.
\end{align}
Similarly, by Proposition \ref{Prop-totalest},
\begin{align}
\norm{ |\nabla|^{2\a-1} (\rho_\ell^{-1}w_{q+1}^{(c)})}_{L_t^1C^\ve_x}
	& \lesssim (\ell^{-5} \lambda^{-1}\tau^{-\frac12})^{1-\frac{2\a-1+\ve}{3}} (\lambda_{q}^{\frac{3\ve}{4}} \ell^{-5}\lambda^2\tau^{-\frac12})^{\frac{2\a-1+\ve}{3}}  \lesssim \ell^{-6}\lambda^{-1},\label{est-lin-vis2-i1-c}\\
\norm{ |\nabla|^{2\a-1} (\rho_\ell^{-1}w_{q+1}^{(o)})}_{L_t^1C^\ve_x}
      	&\lesssim(\ell^{-7} \sigma^{-1})^{1-\frac{2\a-1+\ve}{3}} (\lambda_{q}^{\frac{3\ve}{4}} \ell^{-22}\sigma^{-1})^{\frac{2\a-1+\ve}{3}}  \lesssim \ell^{-22 }\sigma^{-1}.\label{est-lin-vis2-i1-o}
\end{align}
Hence, combining \eqref{est-lin-vis2-i1-p}-\eqref{est-lin-vis2-i1-o} and the fact that $\ell^{-22}\ll \lambda^{\ve}$ altogether we obtain
\begin{align}  \label{est-lin-vis2-i1-end}
\norm{ |\nabla|^{2\a-1} (\rho_\ell^{-1}w_{q+1})}_{L_t^1C^\ve_x}\lesssim \ell^{-2} \lambda^{ -14\ve}+\ell^{-6}\lambda^{-1}+\ell^{-22 }\sigma^{-1}\lesssim \ell^{-2} \lambda^{ -14\ve}.
\end{align}

Concerning the second term on the right-hand side of \eqref{est-lin-vis2},
using \eqref{est-moll-mc1}, \eqref{est-rhoq1-rhol},
\eqref{est-rhoq1-rhol-cv} and the interpolation inequality we get
\begin{align}\label{est-lin-vis2-i2}
\norm{ |\nabla|^{2\a-1} (( \rho_{q+1}^{-1}- \rho_\ell^{-1}) m_\ell)}_{L_t^1C^\ve_x}
&\lesssim  \norm{ ( \rho_{q+1}^{-1}- \rho_\ell^{-1}) m_\ell}_{C_{t,x}}^{1-\frac{2\a-1+\ve}{3}}
\norm{  ( \rho_{q+1}^{-1}- \rho_\ell^{-1}) m_\ell}_{L_t^1C^3_x}^{\frac{2\a-1+\ve}{3}}\notag\\
&\lesssim (\norm{ \rho_{q+1}^{-1}- \rho_\ell^{-1}}_{C_{t,x}}\norm{ m_\ell}_{C_{t,x}})^{1-\frac{2\a-1+\ve}{3}}
(\norm{ \rho_{q+1}^{-1}- \rho_\ell^{-1} }_{C_tC^3_x}\norm{ m_\ell}_{L_t^1C^3_x}  )^{\frac{2\a-1+\ve}{3}}\notag\\
&\lesssim (\ell^{-12}\lambda^{-14\ve}\cdot \la^4)^{1-\frac{2\a-1+\ve}{3}} ( \ell^{-27}\lambda^{-14\ve}\cdot\ell^{-3})^{\frac{2\a-1+\ve}{3}}\notag\\
&\lesssim \ell^{-25} \lambda^{-14\ve}.
\end{align}

Regarding the third term on the right-hand side of \eqref{est-lin-vis2},
by \eqref{e3.1}, \eqref{larsrp}, \eqref{uprinlp-endpt1},  \eqref{est-rhoq1-rhol} and the interpolation inequality,
\begin{align}\label{est-lin-vis2-i3-p}
&\quad 	\norm{ |\nabla|^{2\a-1} (( \rho_{q+1}^{-1}- \rho_\ell^{-1}) w_{q+1}^{(p)})}_{L_t^1C^\ve_x} \notag\\
	&\lesssim  \norm{ ( \rho_{q+1}^{-1}- \rho_\ell^{-1}) w_{q+1}^{(p)}}_{L^1_tC_{x}}^{1-\frac{2\a-1+\ve}{3}}
	\norm{  ( \rho_{q+1}^{-1}- \rho_\ell^{-1}) w_{q+1}^{(p)}}_{L_t^1C^3_x}^{\frac{2\a-1+\ve}{3}}\notag\\
	&\lesssim (\norm{ \rho_{q+1}^{-1}- \rho_\ell^{-1}}_{C_{t,x}}\norm{ w_{q+1}^{(p)}}_{L^1_tC_{x}})^{1-\frac{2\a-1+\ve}{3}}
	(\norm{ \rho_{q+1}^{-1}- \rho_\ell^{-1} }_{C_tC^3_x}\norm{w_{q+1}^{(p)}}_{L_t^1C^3_x}  )^{\frac{2\a-1+\ve}{3}}\notag\\
	&\lesssim (\ell^{-12}\lambda^{-14\ve}\cdot \ell^{-1}\tau^{-\frac12})^{1-\frac{2\a-1+\ve}{3}} ( \ell^{-27}\lambda^{-14\ve}\cdot\ell^{-1}\lambda^3\tau^{-\frac12})^{\frac{2\a-1+\ve}{3}}\notag\\
	&\lesssim \ell^{-22} \lambda^{\a-1-8\ve}\lesssim \ell^{-22} \lambda^{-28\ve}.
\end{align}
Arguing in a similar manner as above we have
\begin{align}
	\norm{ |\nabla|^{2\a-1} (( \rho_{q+1}^{-1}- \rho_\ell^{-1}) w_{q+1}^{(c)})}_{L_t^1C^\ve_x}
	&\lesssim (\ell^{-12}\lambda_{q+1}^{-14\ve}\cdot \ell^{-5}\lambda^{-1}\tau^{-\frac12})^{1-\frac{2\a-1+\ve}{3}} ( \ell^{-27}\lambda^{-14\ve}\cdot\ell^{-5}\lambda^2\tau^{-\frac12})^{\frac{2\a-1+\ve}{3}}\notag\\
	&\lesssim \ell^{-27} \lambda^{\a-2-8\ve}\lesssim \ell^{-27} \lambda^{-1},\label{est-lin-vis2-i3-c}\\
	\norm{ |\nabla|^{2\a-1}( ( \rho_{q+1}^{-1}- \rho_\ell^{-1}) w_{q+1}^{(o)})}_{L_t^1C^\ve_x}
	&\lesssim (\ell^{-12}\lambda_{q+1}^{-14\ve}\cdot \ell^{-7}\sigma^{-1})^{1-\frac{2\a-1+\ve}{3}} ( \ell^{-27}\lambda_{q+1}^{-14\ve}\cdot\ell^{-22}\sigma^{-1})^{\frac{2\a-1+\ve}{3}}\notag\\
	&\lesssim \ell^{-43} \lambda^{-29 \ve}.\label{est-lin-vis2-i3-o}
\end{align}
Combining \eqref{est-lin-vis2-i3-p}-\eqref{est-lin-vis2-i3-o} together we obtain
\begin{align}\label{est-lin-vis2-i3-end}
\norm{ |\nabla|^{2\a-1}( ( \rho_{q+1}^{-1}- \rho_\ell^{-1}) w_{q+1})}_{L_t^1C^\ve_x}
\lesssim \ell^{-22} \lambda^{-28\ve}+ \ell^{-27} \lambda^{-1}+\ell^{-43} \lambda^{-29\ve}
\lesssim \ell^{-43} \lambda^{-28\ve}.
\end{align}

Thus, plugging \eqref{est-lin-vis2-i1-end}, \eqref{est-lin-vis2-i2} and \eqref{est-lin-vis2-i3-end} into \eqref{est-lin-vis2}
we conclude that for $\alpha \in (1/2,1)$,
\begin{align}\label{est-lin-vis2-end}
	\|R_{lin,2}\|_{L_t^1C^\ve_x}
  \lesssim \ell^{-2} \lambda^{ -14\ve}
  +\ell^{-25} \lambda^{ -14\ve} +\ell^{-43} \lambda^{-28\ve}
  \lesssim \ell^{-2} \lambda^{ -13\ve}.
\end{align}

\paragraph{\bf Estimate of $R_{lin,3}$}
Concerning the bulk viscosity $R_{lin,3}$,
we first note that,
similarly to \eqref{e4.16},
by \eqref{est-moll-rhoq},
\begin{align*}
   \|\na \rho_\ell^{-1}\|_{C_tC^\ve_x}
   \lesssim& \|\rho_\ell^{-2} \na \rho_\ell \|_{C_tC^\ve_x}
   \lesssim \|\na \rho_\ell\|_{C_tC_x^\ve} \notag \\
   \lesssim&  \|\na \rho_\ell\|_{C_tC_x}^{1-\ve} \|\na \rho_\ell\|_{C_tC_x^1}^\ve
   \lesssim \lbb_q^{\frac \ve 4  + \frac{\ve^2}{4}}
   \ll \lbb_q^\ve,
\end{align*}
and by \eqref{est-thq}, \eqref{narho-1-lbbq} and \eqref{rhoq1-1-bd},
similarly to \eqref{est-rhoq1-rhol-cv},
\begin{align*}
   \|\na(\rho_{q+1}^{-1} - \rho_\ell^{-1}) \|_{C_tC_x^\ve}
   \lesssim& \|\nabla(\rho_{q+1}^{-1} \rho_\ell^{-1} z_{q+1})\|_{C_tC_x^{\ve}} \notag \\
   \lesssim& \|\rho_{q+1}^{-1}\|_{C_tC_x^{1,\ve}}   \|\rho_{\ell}^{-1}\|_{C_tC_x^{1,\ve}}
             \|z_{q+1}\|_{C_tC_x^1}^{1-\ve}  \|z_{q+1}\|_{C_tC_x^{2}}^\ve \notag \\
   \lesssim& \lbb_q^{\frac{\ve }{4}} \lbb^{\frac{\ve }{4}} (\ell^{-16} \sigma^{-1})^{1-\ve} (\ell^{-21} \sigma^{-1})^\ve \notag \\
   \lesssim& \ell^{-17} \lbb^{-14\ve}.
\end{align*}
Moreover,
\eqref{est-moll-mc1}, \eqref{dcorlp-endpt1} and interpolation yield
\begin{align*}
   \|\div m_\ell\|_{L^1_t C_x^\ve}
   \lesssim \|m_\ell\|_{C_tC_x^1}^{1-\ve} \|m_\ell\|_{C_tC_x^2}^\ve
   \lesssim (\lbb_q^4)^{1-\ve} (\ell^{-1} \lbb_q^4)^\ve
   \lesssim \lbb_q^4 \ell^{-\ve},
\end{align*}
and
\begin{align*}
   \|\div w^{(o)}_{q+1}\|_{L^1_tC^\ve_x}
   \lesssim&  \|\div w^{(o)}_{q+1}\|_{L^1_tC_x}^{1-\ve} \|\div w^{(o)}_{q+1}\|_{L^1_tC^1_x}^\ve \notag \\
   \lesssim& (\ell^{-12} \sigma^{-1})^{1-\ve} (\ell^{-17} \sigma^{-1})^\ve \notag \\
   \lesssim& \ell^{-12} \sigma^{-1} \ell^{-5\ve}
   \lesssim \ell^{-17} \lbb^{-15\ve}.
\end{align*}

Then,
by \eqref{est-moll-rhoq}, \eqref{est-moll-mc1},
\eqref{narho-1-lbbq},
\eqref{div-wpc-dpc-0},
\eqref{velocity perturbation},
\eqref{est-ml-ve},
\eqref{est-rhoq1-rhol-cv},
\eqref{w-l1cve-end}, \eqref{est-ml-ve} and the interpolation inequality,
\begin{align}\label{est-lin-vis3}
	\|R_{lin,3}\|_{L^1_t C_x^\ve}
    & \lesssim \norm{  \nabla\rho_\ell^{-1}\cdot w_{q+1} + \nabla( \rho_{q+1}^{-1}- \rho_\ell^{-1})\cdot m_\ell+  \nabla( \rho_{q+1}^{-1}- \rho_\ell^{-1})\cdot w_{q+1} }_{L_t^1C^\ve_x}\notag\\
	&\quad + \norm{  \rho_\ell^{-1} \div w_{q+1}^{(o)} +  ( \rho_{q+1}^{-1}- \rho_\ell^{-1}) \div  m_\ell+  ( \rho_{q+1}^{-1}- \rho_\ell^{-1}) \div  w_{q+1}^{(o)} }_{L_t^1C^\ve_x}\notag\\
	& \lesssim  \norm{  \nabla \rho_\ell^{-1}}_{C_tC^\ve_x}   \norm{  w_{q+1}}_{L_t^1C^\ve_x}
 +\norm{\nabla (\rho_{q+1}^{-1}-\rho_\ell^{-1})}_{C_tC^\ve_x}(\|m_\ell\|_{L_t^1C^\ve_x}+\|w_{q+1}\|_{L_t^1C^\ve_x} )  \notag\\
 &\quad+ \norm{  \rho_\ell^{-1}}_{C_tC^\ve_x}   \norm{ \div w_{q+1}^{(o)}}_{L_t^1C^\ve_x}
 +\norm{ (\rho_{q+1}^{-1}-\rho_\ell^{-1})}_{C_tC^\ve_x}(\|\div m_\ell\|_{L_t^1C^\ve_x}+\|\div w_{q+1}^{(o)}\|_{L_t^1C^\ve_x} )  \notag\\
 &\lesssim \la^\ve \ell^{-1}\lambda^{-14\ve}+ \ell^{-18} \lambda^{-14\ve}(\la^4+ \ell^{-1}\lambda^{-14\ve}  ) +\la^{\frac{\ve^2}{4}} \ell^{-17}\lambda^{-15\ve}+ \ell^{-12} \lambda^{-14\ve}(\ell^{-\ve} \lbb_q^4 + \ell^{-17}\lambda^{-15\ve}  ) \notag\\
 & \lesssim \ell^{-19} \lambda^{-14\ve}.
\end{align}

\paragraph{\bf Estimate of $R_{lin,4}$}
For the remaining term $R_{lin,4}$ in \eqref{rup},
by \eqref{e4.17}, \eqref{est-ml-ve}, \eqref{est-rhoq1-rhol-cv},
and \eqref{w-l1cve-end},
\begin{align} \label{linear estimate1}
	\|R_{lin,4}\|_{L^1_tC_x^\ve}
    &\lesssim\norm{\rho_{q+1}^{-1}}_{C_tC^\ve_x} \norm{m_\ell}_{C_tC^\ve_x}\norm{w_{q+1}}_{L_t^1C^\ve_x}+ \| \rho_{q+1}^{-1}- \rho_\ell^{-1}\|_{C_tC^\ve_x}\|m_\ell\|_{C_tC^\ve_x}^2 \nonumber \\
	 & \lesssim \lambda^{\frac{\ve^2}{4}} \la^4 \ell^{-1}\lambda^{-14\ve}+ \ell^{-12} \lambda^{-14\ve} \la^8
     \lesssim 	\ell^{-13} \lambda^{-13\ve}.
\end{align}

Now, we conclude from \eqref{time derivative}, \eqref{e4.19}, \eqref{est-lin-vis2-end},
\eqref{est-lin-vis3} and \eqref{linear estimate1} altogether that
\begin{align}   \label{linear estimate}
	\norm{R_{lin} }_{L_t^1C^\ve_x}
       \lesssim&  \ell^{-10}\lambda^{-9\ve}+ \ell^{-13}\lambda^{-14\ve}
       + \ell^{-2} \lambda^{-13\ve}+\ell^{-19} \lambda^{-14\ve} +\ell^{-13} \lambda^{-13\ve} \notag \\
       \lesssim& \ell^{-10}\lambda^{-9\ve}.
\end{align}

\paragraph{\bf (ii) Oscillation error.}
We now deal with the oscillation error.
For this purpose,
we decompose the oscillation error into three parts:
\begin{align*}
	R_{osc} = R_{osc.1} +  R_{osc.2}+  R_{osc.3},
\end{align*}
where the high-low spatial oscillation error
\begin{align*}
	R_{osc.1}
	&:=   \sum_{k \in \Lambda }\mathcal{R}  \P_{\neq 0}\left(\g^2 \P_{\neq 0}(W_{(k)}\otimes W_{(k)} )\nabla (\rho_\ell^{-1}a_{(k)}^2) \right),
\end{align*}
the temporal oscillation error
\begin{align*}
	R_{osc.2} &
	:= -\sigma^{-1} \sum_{k\in\Lambda} h_{(k)} \p_t\mathcal{R}((k\cdot\nabla)(\rho_\ell^{-1}a_{(k)}^2 )k),
\end{align*}
and the density error
\begin{align*}
	R_{osc.3}
	&:= \mathcal{R}\div( ( \rho_{q+1}^{-1}-\rho_\ell^{-1})w_{q+1}\otimes w_{q+1}).
\end{align*}

In order to handle the high-low spatial oscillation error,
we use the following stationary phase lemma. The proof of this lemma is similar to \cite[Proposition 5.2]{dls13}, here we omit the details.

\begin{lemma}[Stationary phase lemma] \label{commutator estimate1}
Let $\ve \in(0,1)$ and $m \geq 1$. Assume that $\theta \in C^{m, \ve}\left(\mathbb{T}^d\right)$, then we have
	$$
	\left\|\mathcal{R}\left(\theta(x) e^{i \lambda \xi \cdot x}\right)\right\|_{C^\ve_x}
    \lesssim \frac{\|\theta\|_{C_{x}}}{\lambda^{1-\ve}}
    +\frac{\|\theta\|_{C^{m, \ve}_x}}{\lambda^{m-\ve}},
	$$
	where $\lambda \xi \in \mathbb{Z}^3$ and the implicit constant is independent of $q$.
\end{lemma}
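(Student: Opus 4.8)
The plan is to follow the classical non‑stationary phase / parametrix argument of De Lellis–Sz\'ekelyhidi \cite{dls13}. By \eqref{calR-def}, $\mathcal{R}$ is a fixed finite linear combination of the Fourier multipliers $\Delta^{-1}\partial_k$ and $\Delta^{-2}\partial_i\partial_j\partial_k$, each homogeneous of order $-1$ and smooth off the origin; equivalently, on $\T^d$ with $d\ge 2$ each acts as convolution against a kernel $K$ which near the origin is smooth off $0$ and homogeneous of degree $-(d-1)$. Hence it suffices to prove the estimate for one such operator $Tg(x)=\int_{\T^d}K(x-y)g(y)\,\d y$. Write $\xi':=\lambda\xi\in\mathbb{Z}^d$ and $\nu:=\xi'/|\xi'|$, and use $e^{i\xi'\cdot y}=\tfrac{1}{i|\xi'|^2}\,\xi'\!\cdot\!\nabla_y e^{i\xi'\cdot y}$ to integrate by parts once in $T(\theta e^{i\xi'\cdot x})(x)=\int K(x-y)\theta(y)e^{i\xi'\cdot y}\,\d y$. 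Since $\nabla_y[K(x-y)\theta(y)]=-(\nabla K)(x-y)\theta(y)+K(x-y)\nabla\theta(y)$, this yields
\[
T(\theta e^{i\xi'\cdot x})=\frac{1}{i|\xi'|}\,S\!\left(\theta\,e^{i\xi'\cdot x}\right)-\frac{1}{i|\xi'|}\,T\!\left((\nu\!\cdot\!\nabla\theta)\,e^{i\xi'\cdot x}\right),
\]
where $S$ is convolution against $\nu\!\cdot\!\nabla K$, a genuine \emph{Calder\'on--Zygmund} kernel (degree $-d$), and the last term has the same structure as the left side with $\theta$ replaced by the one‑derivative‑higher function $\nu\!\cdot\!\nabla\theta$. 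Iterating $m$ times, recursing only on the last term — whose kernel stays $K$ and never acquires a second derivative — gives
\[
\mathcal{R}(\theta e^{i\xi'\cdot x})=\sum_{j=0}^{m-1}\frac{c_j}{|\xi'|^{\,j+1}}\,S_j\!\left((\nu\!\cdot\!\nabla)^{j}\theta\;e^{i\xi'\cdot x}\right)+\frac{c_m}{|\xi'|^{\,m}}\,\mathcal{R}\!\left((\nu\!\cdot\!\nabla)^{m}\theta\;e^{i\xi'\cdot x}\right),
\]
with $|c_j|\le 1$ and $S_0,\dots,S_{m-1}$ Calder\'on--Zygmund operators whose bounds are uniform in $\lambda$. (Equivalently, one may Taylor–expand the symbol $\zeta\mapsto\widehat{\mathcal{R}}(\zeta+\xi')$ around $\xi'$ after a Littlewood–Paley splitting of $\theta$ into frequencies $\lesssim|\xi'|/2$ and $\gtrsim|\xi'|/2$, the high part being negligible by the Hölder characterization.)

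The second step is to estimate each summand in $C^\varepsilon_x$. Here one invokes the $C^\varepsilon_x\!\to\! C^\varepsilon_x$ boundedness of Calder\'on--Zygmund operators on the torus (cf. the remark after \eqref{r-iden}, and \cite{cz54}) and of $\mathcal{R}$ itself (which is even smoothing, $\mathcal{R}\sim|\nabla|^{-1}$), together with the elementary bound $\|f\,e^{i\xi'\cdot x}\|_{C^\varepsilon_x}\lesssim \|f\|_{C^\varepsilon_x}+|\xi'|^{\varepsilon}\|f\|_{C_x}$, which follows from $|e^{i\xi'\cdot x}-e^{i\xi'\cdot y}|\lesssim\min(1,|\xi'|\,|x-y|)$. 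Since $|\xi'|=\lambda|\xi|\gtrsim\lambda$, the $j$‑th summand is bounded by $\lambda^{-(j+1)}\|\theta\|_{C^{j,\varepsilon}_x}+\lambda^{-(j+1)+\varepsilon}\|\theta\|_{C^{j}_x}$ and the remainder by $\lambda^{-m}\|\theta\|_{C^{m,\varepsilon}_x}+\lambda^{-m+\varepsilon}\|\theta\|_{C^{m}_x}\lesssim\lambda^{-(m-\varepsilon)}\|\theta\|_{C^{m,\varepsilon}_x}$. For $j=0$ the piece $\lambda^{-(1-\varepsilon)}\|\theta\|_{C_x}$ is already the first term of the claimed bound, and the remainder is the second term; every other contribution (the cases $0<j<m$, and the leftover $\lambda^{-1}\|\theta\|_{C^\varepsilon_x}$ from $j=0$) is absorbed into these two by the interpolation $\|\theta\|_{C^{j,\varepsilon}_x}\lesssim\|\theta\|_{C_x}^{1-s}\|\theta\|_{C^{m,\varepsilon}_x}^{s}$, $s=(j+\varepsilon)/(m+\varepsilon)$, followed by Young's inequality with an appropriate power of $\lambda$; the required inequality of exponents reduces to $(j+\varepsilon)\tfrac{1-s}{s}=m-j\ge m-j-1-\varepsilon$, which always holds.

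The integration‑by‑parts bookkeeping and the final interpolation are routine; the one genuinely delicate point — and the main obstacle — is that the conclusion is a \emph{Hölder} estimate, not merely an $L^\infty_x$ one. This forces two things: first, the recursion must be arranged so that every operator that appears stays Calder\'on--Zygmund (only one derivative is ever allowed to land on the kernel $K$), which is precisely why it matters that $\mathcal{R}$ in \eqref{calR-def} is built from genuine CZ operators composed with $|\nabla|^{-1}$; and second, the oscillatory prefactor $e^{i\xi'\cdot x}$ must be carried through the Hölder seminorm, costing a factor $|\xi'|^{\varepsilon}\sim\lambda^{\varepsilon}$ — this is exactly the source of the $\varepsilon$‑loss in the exponents $1-\varepsilon$ and $m-\varepsilon$, and the whole scheme must be tuned so that this loss, and nothing worse, appears.
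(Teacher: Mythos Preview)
Your proof is correct and follows precisely the approach the paper defers to: the paper omits the proof entirely, citing \cite[Proposition 5.2]{dls13}, and your integration-by-parts/parametrix argument is exactly the De Lellis--Sz\'ekelyhidi scheme, carried out with the appropriate care for the $C^\ve_x$ topology. The key structural points---recursing only on the $T$-term so that every operator appearing is either $\mathcal{R}$ itself or a genuine Calder\'on--Zygmund operator, and tracking the $|\xi'|^\ve$ loss from the oscillatory factor through the H\"older seminorm---are handled correctly, and the final interpolation absorbing the intermediate $j$-terms is sound (indeed, with $s=(j+\ve)/(m+\ve)$ one finds the required exponent on the $C^{m,\ve}$ side is $m+1>m-\ve$).
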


Since
\begin{align*}
	\P_{\neq 0}(W_{(k)}\otimes W_{(k)} )=\P_{\geq (\lambda/2)}(W_{(k)}\otimes W_{(k)})= \P_{\geq (\lambda/2)}(\phi_{(k)}^2)(k\otimes k),
\end{align*}
and $\{\phi^2_{(k)}\}_{k}$ are $(\mathbb{T}/\lambda)^d$-periodic functions, we can decompose
\begin{align}\label{fourier}
	\P_{\geq (\lambda/2)}(\phi_{(k)}^2)=\sum_{\xi\in \mathbb{Z}^d\setminus\{0\}} f_k(\xi) e^{ i\lambda\xi \cdot x},
\end{align}
where $f_k(\xi)$ are Fourier coefficients of $\P_{\geq (\lambda/2)}(\phi_{(k)}^2)$. We note that,
$f_k(\xi)$ decay arbitrarily fast, i.e.,
$$ |f_k(\xi) |\leq C |\xi|^{-m}, $$
for all $m\in \mathbb{N}$. Here, $C$ is a constant independent of $q$.

Thus, using the identity \eqref{fourier}, Lemmas \ref{buildingblockestlemma},  \ref{mae-endpt1} and \ref{commutator estimate1}
we deduce that
\begin{align}  \label{I1-esti-endpt1}
	\norm{R_{osc.1} }_{L^1_tC^\ve_x}
	&\lesssim  \sum_{ k \in \Lambda } \|\g\|_{L^2_t}^2\norm{\mathcal{R} \P_{\not =0}
		\left(\P_{\geq (\lambda/2)}(W_{(k)}\otimes W_{(k)} )\nabla (\rho_\ell^{-1}a_{(k)}^2)\right)}_{C_tC^\ve_x} \notag \nonumber  \\
	& \lesssim \sum_{ k \in \Lambda }\sum_{\xi\in \mathbb{Z}^d\setminus\{0\}} |\xi|^{-3}	\lambda^{-1+\ve}( \|\na (\rho_\ell^{-1}a^2_{(k)})\|_{C_{t,x}} + \|\na (\rho_\ell^{-1}a^2_{(k)})\|_{C_tC^{1,\ve}_{x}}  )
	\nonumber  \\
	&\lesssim \sum_{ k \in \Lambda }	\lambda^{-1+\ve} \|\rho_\ell^{-1}\|_{C_tC_{x}^3} \|a^2_{(k)}\|_{C^3_{t,x}} \notag\\
	& \lesssim \lambda^{-1+\ve} \la^{\frac{3\ve}{4}}\ell^{-16} \lesssim \ell^{-17}\lambda^{-1+\ve}.
\end{align}

Regarding the temporal oscillation error,
we apply \eqref{hk-esti}, \eqref{mag amp estimates} and \eqref{narho-1-lbbq} to estimate
\begin{align}  \label{I2-esti-endpt1}
	\norm{R_{osc.2} }_{L^1_tC_x^\ve}
 &\lesssim  \sigma^{-1} \sum_{k\in\Lambda}
\left\|h_{(k)}\|_{C_t}\|\p_t\nabla(\rho_\ell^{-1} a_{(k)}^{2})\right\|_{L^1_tC_x^\ve} \nonumber  \\
&\lesssim \sigma^{-1} \sum_{k\in\Lambda} \|h_{(k)}\|_{C_t}\|\p_t\nabla(\rho_\ell^{-1} a_{(k)}^{2})\|_{C_{t,x}}^{1-\ve} \|\p_t\nabla(\rho_\ell^{-1} a_{(k)}^{2})\|_{C^1_{t,x}}^\ve\nonumber \\
&\lesssim  \sigma^{-1} \sum_{k\in\Lambda} \|h_{(k)}\|_{C_t}(\|\rho_\ell^{-1}\|_{C^2_{t,x}} \|a_{(k)}^{2}\|_{C^2_{t,x}})^{1-\ve}(\|\rho_\ell^{-1}\|_{C^3_{t,x}} \|a_{(k)}^{2}\|_{C^3_{t,x}})^\ve\notag\\
&\lesssim  \sigma^{-1}( \ell^{-13})^{1-\ve} ( \ell^{-19} )^\ve
\lesssim \sigma^{-1} \ell^{-14} \lesssim \ell^{-14}\lambda^{-15\ve}.
\end{align}

Finally, for density error $R_{osc.3}$,
we apply Proposition \ref{Prop-totalest} to get
\begin{align}\label{w-l2cve}
	\|w_{q+1}\|_{L^2_tC_x^\ve} & \lesssim \|w_{q+1}^{(p)}\|_{L^2_tC_x^\ve}+ \|w_{q+1}^{(c)}\|_{L^2_tC_x^\ve}+ \|w_{q+1}^{(o)}\|_{L^2_tC_x^\ve} \notag\\
	&\lesssim \ell^{-1} \lambda^{\ve}+\ell^{-5} \lambda^{\ve-1}+\ell^{-7-5\ve} \sigma^{-1}\notag\\
	&\lesssim \ell^{-1} \lambda^{\ve},
\end{align}
which along with \eqref{est-rhoq1-rhol-cv} yields that
\begin{align}  \label{I3-esti-endpt1}
	\norm{R_{osc.3} }_{L^1_tC^\ve_x}
    &\lesssim \| \rho_{q+1}^{-1}-\rho_\ell^{-1}\|_{C_tC^\ve_x}\|w_{q+1}\|_{L^2C^\ve_x}^2 \lesssim \ell^{-14} \lambda^{-12\ve}.
\end{align}

Therefore, combining \eqref{I1-esti-endpt1}, \eqref{I2-esti-endpt1} and \eqref{I3-esti-endpt1}
altogether
we conclude
\begin{align}
	\label{oscillation estimate}
	\norm{R_{osc}}_{L_t^1C^\ve_x}
    &\lesssim  \ell^{-17}\lambda^{-1+\ve}+\ell^{-14}\lambda^{-15\ve}+\ell^{-14} \lambda^{-12\ve} \notag \\
    &\lesssim \ell^{-14} \lambda^{-12\ve}.
\end{align}

\paragraph{\bf (iii) Corrector error.}
Regarding the corrector error $R_{cor}$ in \eqref{rup2},
using \eqref{ucorlp-endpt1}, \eqref{dcorlp-endpt1}, \eqref{narho-1-lbbq},
\eqref{w-l2cve} and interpolation we derive
\begin{align}\label{corrector estimate}
	\norm{R_{cor} }_{L^1_{t}C^{\ve}_x}
	\lesssim& \|\rho_{\ell}^{-1}\|_{C_tC_x^\ve}
	(\norm{ w_{q+1}^{(p)} \otimes (w_{q+1}^{(c)}+ w_{q+1}^{(o)}) \|_{L^1_{t}C^{\ve}_x}
        + \|(w_{q+1}^{(c)}+w_{q+1}^{(o)}) \otimes w_{q+1} }_{L^1_{t}C^{\ve}_x} ) \notag \\
	\lesssim&  \|\rho_{\ell}^{-1}\|_{C_tC_x^\ve}
	\norm{w_{q+1}^{(c)}+w_{q+1}^{(o)}}_{L^2_{t}C^{\ve}_x} (\norm{w^{(p)}_{q+1} }_{L^2_{t}C^{\ve}_x} + \norm{w_{q+1} }_{L^2_{t}C^{\ve}_x})\notag  \\
	\lesssim& \la^{\frac{\ve^2}{4}} (\ell^{-5} \lambda^{\ve-1}+\ell^{-7-5\ve} \sigma^{-1}) \ell^{-1} \lambda^{\ve}\notag\\
	\lesssim& \ell^{-7}  \lambda^{2\ve-1}
	+ \ell^{-9} \lambda^{-14\ve}
	\lesssim \ell^{-9} \lambda^{-14\ve}.
\end{align}

\paragraph{\bf (iv) Pressure error.}
In order to estimate the pressure error in \eqref{rpre},
we use the boundedness of Calder\'{o}n-Zygmund operators,
the mean-valued Theorem and interpolation to get
\begin{align*}
	\norm{R_{pre} }_{L^1_{t}C^{\ve}_x}
	&\lesssim \norm{ P(\rho_{q+1}) -P(\rho_\ell) }_{C_{t}C^{\ve}_x} \notag \\
	&\lesssim \left\|\int_{\rho_\ell}^{\rho_{q+1}}P'(y) \d y \right\|_{C_{t,x}}^{1-\ve}\norm{ P(\rho_{q+1}) -P(\rho_\ell) }_{C_{t}C^{1}_x}^\ve \notag \\
	&\lesssim \norm{ P'(\xi) (\rho_{q+1}-\rho_{\ell}) }_{C_{t,x}}^{1-\ve} (\norm {P(\rho_{q+1})}_{C_{t}C^{1}_x}+\norm{  P(\rho_\ell) }_{C_{t}C^{1}_x}  )^\ve\notag  \\
	&\lesssim \|P'(\xi)\|_{C_{t,x}}^{1-\ve} \norm{ \rho_{q+1}- \rho_{\ell} }_{C_{t,x}}^{1-\ve}
        \bigg( \norm {P(\rho_{q+1})}_{C_{t,x} }+\norm{  P(\rho_\ell) }_{C_{t,x} }   \notag \\
    &\qquad \qquad \qquad + \|P'(\rho_{q+1})\|_{C_{t,x} } \norm{ \rho_{q+1}}_{C_{t}C^{1}_x}+ \|P'(\rho_{\ell})\|_{C_{t,x} } \norm{ \rho_{\ell}}_{C_{t}C^{1}_x}  \bigg)^\ve,
\end{align*}
where $\min\{ \rho_{q+1},\rho_{\ell}\} \leq \xi\leq \max\{ \rho_{q+1},\rho_{\ell}\} $.

As in \eqref{P'-P''-bdd},
because $\rho_{q+1}$ and $\rho_\ell$ are uniformly away from zero and infinity,
and $P$, $P'$, $P''$ are continuous,
we have the uniform boundedness
\begin{align}   \label{P''-bdd}
   \|P'(\xi)\|_{C_{t,x}} +  \|P'(\rho_\ell)\|_{C_{t,x}}  +  \|P'(\rho_{q+1})\|_{C_{t,x}}
   \lesssim 1.
\end{align}

Then, using  \eqref{est-moll-rhoq},  \eqref{est-thq} and
\eqref{rhoq1-cn} we obtain
\begin{align}	\label{pressure estimate}
	\norm{R_{pre} }_{L^1_{t}C^{\ve}_x}
	\lesssim(\ell^{-11} \sigma^{-1} )^{1-\ve}(1+\lambda^{\frac{\ve}{4}} + \lambda_{q}^{\frac{\ve}{4}}  )^\ve
	\lesssim \ell^{-11}\lambda^{-13\ve},
\end{align}

\paragraph{\bf (v) Commutator error.}
It remains to treat the commutator error given by \eqref{def-rcom}.
Using \eqref{def-rcom}
we deduce
\begin{align}\label{est-rcom}
\norm{	R_{com} }_{L^1_{t}C^{\ve}_x}&\lesssim \| P(\rho_{\ell})- P_{\ell} \|_{L^1_{t}C^{\ve}_x} +\| |\nabla|^{2\a-1} \(\rho_\ell^{-1}m_\ell- (\rho_q^{-1}m_q)*_{x} \phi_{\ell} *_{t} \varphi_{\ell}\) \|_{L^1_{t}C^{\ve}_x} \notag \\
&\quad +\|\div \(\rho_\ell^{-1}m_\ell- (\rho_q^{-1}m_q)*_{x} \phi_{\ell} *_{t} \varphi_{\ell}\)\|_{L^1_{t}C^{\ve}_x} \notag\\
&\quad  + \|\rho_\ell^{-1} m_{\ell}\otimes m_{\ell}- (\rho_q^{-1}\m \otimes \m ) *_{x} \phi_{\ell} *_{t} \varphi_{\ell}\|_{L^1_{t}C^{\ve}_x} \notag\\
&:=R_{com.1}+R_{com.2}+R_{com.3}+R_{com.4}.
\end{align}
In the following we estimate the four parts on the right-hand side separately.
\medskip

\paragraph{\bf Estimate of $R_{com.1}$}
For the pressure commutator error $R_{com.1}$,
similarly to \eqref{pressure estimate},
we estimate
\begin{align*}\label{est-rcom1}
\|R_{com.1} \|_{L^1_tC^\ve_x}
& \lesssim \|P(\rho_{\ell})- P_{\ell}\|_{C_{t,x}}^{1-\ve}\|P(\rho_{\ell})- P_{\ell}\|_{C_{t}C^{1}_x}^\ve\notag\\
&\lesssim ( \| P(\rho_{\ell})- P(\rho_q)\|_{C_{t,x}}+ \|P(\rho_q)-P_{\ell}\|_{C_{t,x}})^{1-\ve} ( \|P(\rho_q)\|_{C_tC^1_{x}}+\|P_{\ell}\|_{C_tC^1_{x}})^\ve\notag\\
&\lesssim ( \| \rho_{\ell}- \rho_q \|_{C_{t,x}}+ \ell \|P(\rho_q)\|_{C^1_{t,x}})^{1-\ve}  ( \| P(\rho_{q}) \|_{C_{t,x}}+ \| P'(\rho_{q}) \|_{C_{t,x}}\| \rho_{q} \|_{C_tC^1_{x}})^\ve,
\end{align*}
which along with \eqref{rhoc1}, \eqref{est-moll-rl-rq} and \eqref{P'-P''-bdd} yields
\begin{align}
\|R_{com.1} \|_{L^1_tC^\ve_x}
&\lesssim  (\ell \|\rho_{q}\|_{C^1_{t,x}} + \ell\la^{\frac14})^{1-\ve} (1+\| \rho_{q}\|_{C_tC^1_{x}})^\ve \notag\\
&\lesssim \ell^{1-\ve} \la^{\frac{\ve}{4}}\lesssim \ell^{\frac{1}{2}}.
\end{align}

\paragraph{\bf Estimate of $R_{com.2}$}
Regarding the shear viscous commutator error $R_{com.2}$, if $\a\in(\frac12,1)$,
an application of the interpolation inequality gives
\begin{align}\label{est-rcom2}
\|R_{com.2} \|_{L^1_tC^\ve_x}
&\lesssim \|   \rho_\ell^{-1}m_\ell- (\rho_q^{-1}m_q)*_{x} \phi_{\ell} *_{t} \varphi_{\ell} \|_{C_{t,x}}^{1-\frac{2\a-1+\ve}{3}} \| \rho_\ell^{-1}m_\ell- (\rho_q^{-1}m_q)*_{x} \phi_{\ell} *_{t} \varphi_{\ell} \|_{C_{t}C^{3}_x}^{\frac{2\a-1+\ve}{3}}.
\end{align}
Note that, by \eqref{rhobd}, \eqref{mc1} and \eqref{est-moll-rl-rq},
\begin{align}\label{est-rcom2-1}
&\quad  \|   \rho_\ell^{-1}m_\ell- (\rho_q^{-1}m_q)*_{x} \phi_{\ell} *_{t} \varphi_{\ell} \|_{C_{t,x}}\notag\\
&\lesssim \|   \rho_\ell^{-1}m_\ell-  \rho_\ell^{-1}m_q \|_{C_{t,x}} +  \|   \rho_\ell^{-1}m_q-  \rho_q^{-1}m_q  \|_{C_{t,x}} +\|   \rho_q^{-1}m_q	- (\rho_q^{-1}m_q)*_{x} \phi_{\ell} *_{t} \varphi_{\ell} \|_{C_{t,x}}\notag\\
&\lesssim  \|   \rho_\ell^{-1}\|_{C_{t,x}} \| m_\ell-m_q \|_{C_{t,x}} + \|  \rho_\ell^{-1}-  \rho_q^{-1}  \|_{C_{t,x}}\|m_q  \|_{C_{t,x}} +\ell \| \rho_q^{-1}m_q\|_{C^1_{t,x}}\notag\\
&\lesssim \ell \|m_q\|_{C^1_{t,x}}
   + \|\rho_\ell\|_{C_{t,x}}\|\rho_q\|_{C_{t,x}} \|\rho_\ell - \rho_q\|_{C_{t,x}} \|m_q\|_{C_{t,x}}
   + \ell \| \rho_q^{-1} \|_{C^1_{t,x}}\|m_q\|_{C^1_{t,x}}\notag\\
&\lesssim \ell \la^4+\ell \lbb_q^{2+ \frac \ve 4} + \ell \la^{4+\frac{\ve}{4}}
\lesssim  \ell^{\frac12}.
\end{align}
Similarly, using also \eqref{est-moll-rl-rq},  \eqref{est-rhoq1-rhol}
and \eqref{rhoq1-1-bd} with $q$ replacing $q+1$ we estimate
\begin{align}\label{est-rcom2-2}
	&\quad  \|   \rho_\ell^{-1}m_\ell- (\rho_q^{-1}m_q)*_{x} \phi_{\ell} *_{t} \varphi_{\ell} \|_{C_tC^3_{x}}\notag\\
	&\lesssim  \|   \rho_\ell^{-1}\|_{C_tC^3_{x}} \| m_\ell-m_q \|_{C_tC^3_{x}} + \|  \rho_\ell^{-1}-  \rho_q^{-1}  \|_{C_tC^3_{x}}\|m_q  \|_{C_tC^3_{x}} +\ell \| \rho_q^{-1}m_q\|_{C_tC^4_{x}}+ \ell \| \rho_q^{-1}m_q\|_{C_t^1C^3_{x}}\notag\\
	&\lesssim \ell\la^{\frac{3\ve}{4}} \|m_q\|_{C^4_{t,x}} + \ell^{-27}\lambda^{-14\ve}\la^8  +
	\ell \| \rho_q^{-1} \|_{C_tC^4_{x}}\|  m_q\|_{C_tC^4_{x}}
	+ \ell \| \rho_q^{-1} \|_{C_t^1C^3_{x}}\|  m_q\|_{C_t^1C^3_{x}}\notag\\
	&\lesssim \ell \la^{10 +\frac{3\ve}{4}} + \ell^{-13} \lambda^{-14\ve} + \ell \la^{10+\ve}
	\lesssim  \ell^{\frac12}.
\end{align}
If $\a\in(0,\frac12]$, arguing in a similar manner with \eqref{est-rcom2}-\eqref{est-rcom2-2} we obtain
\begin{align}\label{est-rcom2-3}
	\|R_{com.2} \|_{L^1_tC^\ve_x}
	&\lesssim \|   \rho_\ell^{-1}m_\ell- (\rho_q^{-1}m_q)*_{x} \phi_{\ell} *_{t} \varphi_{\ell} \|_{C_{t,x}}^{1-\frac{\ve}{3}} \| \rho_\ell^{-1}m_\ell- (\rho_q^{-1}m_q)*_{x} \phi_{\ell} *_{t} \varphi_{\ell} \|_{C_{t}C^{3}_x}^{\frac{\ve}{3}}\lesssim \ell^{\frac12}.
\end{align}
Thus, we conclude from \eqref{est-rcom2}-\eqref{est-rcom2-3} that
\begin{align}\label{est-rcom2-end}
 \|R_{com.2} \|_{L^1_t C^\ve_x}\lesssim  \ell^{\frac12}.
\end{align}

\paragraph{\bf Estimate of $R_{com.3}$}
The bulk viscous commutator error $R_{com.3}$ can be estimated in a similar fashion as that of $R_{com.2}$:
\begin{align}\label{est-rcom3}
\|R_{com.3} \|_{L^1_tC^\ve_x}
 &\lesssim \| \rho_\ell^{-1}m_\ell- (\rho_q^{-1}m_q)*_{x} \phi_{\ell} *_{t} \varphi_{\ell} \|_{C_{t,x}}^{1-\frac{1+\ve}{3}} \| \rho_\ell^{-1}m_\ell- (\rho_q^{-1}m_q)*_{x} \phi_{\ell} *_{t} \varphi_{\ell} \|_{C_{t}C^{3}_x}^{\frac{1+\ve}{3}}\notag\\
 &\lesssim \ell^{\frac12}.
\end{align}

\paragraph{\bf Estimate of $R_{com.4}$}
At last,
concerning the nonlinear commutator error $R_{com.4}$,
we see that
\begin{align}\label{est-rcom4}
\|R_{com.4} \|_{L^1_tC^\ve_x}
\lesssim& \|\rho_\ell^{-1} m_{\ell}\otimes m_{\ell}-\rho_q^{-1}\m \otimes \m \|_{L^1_{t}C^{\ve}_x} \notag \\
   & +\|\rho_q^{-1}\m \otimes \m - (\rho_q^{-1}\m \otimes \m ) *_{x} \phi_{\ell} *_{t} \varphi_{\ell}\|_{L^1_{t}C^{\ve}_x}  \notag \\
   =:&J_1 + J_2.
\end{align}
For the first term on the right-hand side above, we have
\begin{align}\label{est-rcom4-1}
  J_1 &\lesssim \|\rho_\ell^{-1}  m_{\ell}\otimes( m_{\ell} -\m) \|_{L^1_{t}C^{\ve}_x}+ \|\rho_\ell^{-1}  (m_{\ell}-\m) \otimes m_{q} \|_{L^1_{t}C^{\ve}_x} + \|(\rho_\ell^{-1}-\rho_q^{-1} ) \m\otimes\m \|_{L^1_{t}C^{\ve}_x} \notag\\
&\lesssim \| \rho_\ell^{-1}\|_{C_{t}C^{\ve}_x} \|m_{\ell}-\m \|_{L^1_{t}C^{\ve}_x} (\|m_\ell \|_{C_{t}C^{\ve}_x}+\|\m \|_{C_{t}C^{\ve}_x}  )+  \|\rho_\ell^{-1}-\rho_q^{-1}\|_{C_{t}C^{\ve}_x}\|\m\|_{L^2_{t}C^{\ve}_x}^2.
\end{align}	
Note that, by  interpolation,
\eqref{est-moll-rl-rq}, \eqref{est-moll-ml-mq} and \eqref{narho-1-lbbq},
\begin{align}
 \|m_{\ell}-\m \|_{L^1_{t}C^{\ve}_x}
   \lesssim& \|m_{\ell}-\m \|_{C_{t,x}}^{1-\ve}\|m_{\ell}-\m \|_{C^1_{t,x}}^\ve   \notag \\
   \lesssim& (\ell \lbb_q^4)^{1-\ve}(\ell \lbb_q^6)^{\ve} \lesssim \ell \la^{4+2\ve}, \label{ml-mq-ve}
\end{align}
and
\begin{align}
  \|\rho_\ell^{-1}-\rho_q^{-1}\|_{C_{t}C^{\ve}_x}
  \lesssim& \|\rho_\ell^{-1}\|_{C_{t}C^{\ve}_x}\|\rho_q^{-1}\|_{C_{t}C^{\ve}_x}\|\rho_\ell-\rho_q\|_{C_{t}C^{\ve}_x}  \notag \\
  \lesssim& (\la^{\frac{\ve^2}{4}})^2
    \|\rho_\ell - \rho_q\|_{C_{t,x}}^{1-\ve}
    \|\rho_\ell - \rho_q\|_{C_{t}C_x^1}^{\ve}
  \lesssim \ell \la^{\ve}. \label{rl-rq-ve}
\end{align}
Thus, plugging \eqref{ml-mq-ve} and \eqref{rl-rq-ve} into \eqref{est-rcom4-1}
and using \eqref{e4.16} and \eqref{est-ml-ve}
we arrive at
\begin{align}\label{est-rcom4-1-end}
  J_1 \lesssim \la^{\frac{\ve^2}{4}} \ell \la^{4+2\ve} \la^4+ \ell\la^\ve\la^8
	\lesssim \ell \la^{9} \lesssim \ell^{\frac12}.
\end{align}	
Concerning the second term on the right-hand side of \eqref{est-rcom4},
using interpolation and \eqref{rhoc1}, \eqref{mc1} we get
\begin{align}\label{est-rcom4-2}
	J_2
    &\lesssim \|\rho_q^{-1}\m \otimes \m - (\rho_q^{-1}\m \otimes \m ) *_{x} \phi_{\ell} *_{t} \varphi_{\ell}\|_{C_{t,x}}^{1-\ve}\|\rho_q^{-1}\m \otimes \m - (\rho_q^{-1}\m \otimes \m ) *_{x} \phi_{\ell} *_{t} \varphi_{\ell}\|_{C_tC^1_{x}}^\ve \notag\\
	&\lesssim (\ell\|\rho_q^{-1}\m \otimes \m\|_{C^1_{t,x}} )^{1-\ve}
     (\ell\|\rho_q^{-1}\m \otimes \m\|_{C_{t,x}^2})^\ve\notag\\
	&\lesssim (\ell\|\rho_q^{-1}\|_{C^1_{t,x}}\| \m \|_{C^1_{t,x}}^2 )^{1-\ve}
      (\ell\|\rho_q^{-1}\|_{C_{t,x}^2} \| \m \|_{C_{t,x}}^2  )^\ve\notag\\
	&\lesssim (\ell\la^{\frac{\ve}{4}} \la^8)^{1-\ve}( \ell \la^{\frac{\ve}{2}} \la^{12}  )^\ve\lesssim \ell\la^9\lesssim \ell^{\frac12}.
\end{align}
Thus, it follows from \eqref{est-rcom4}, \eqref{est-rcom4-1-end} and \eqref{est-rcom4-2} that
\begin{align}\label{est-rcom4-end}
\|R_{com.4} \|_{L^1_tC^\ve_x}\lesssim \ell^{\frac12}.
\end{align}

Therefore, combining \eqref{est-rcom1}, \eqref{est-rcom2-end}, \eqref{est-rcom3} and \eqref{est-rcom4-end}
altogether we obtain
\begin{align}\label{est-rcom-end}
\|R_{com} \|_{L^1_tC^\ve_x} \lesssim   \ell^{\frac12}.
\end{align}

Now, combining the estimates \eqref{linear estimate},
\eqref{oscillation estimate},
\eqref{corrector estimate}, \eqref{pressure estimate}, \eqref{est-rcom-end} altogether
and using the smallness condition \eqref{b-beta-ve} we conclude that
\begin{align} \label{rq1b}
	\|R_{q+1} \|_{L^1_{t}C_x}
	&\leq \| R_{lin} \|_{L^1_tC^\ve_x} +  \| R_{osc}\|_{L^1_tC^\ve_x}
	+  \|R_{cor} \|_{L^1_tC^\ve_x} +\|R_{pre} \|_{L^1_tC^\ve_x}+\|R_{com} \|_{L^1_tC^\ve_x}\nonumber  \\
	&\lesssim   \ell^{-10}\lambda^{-9\ve}+\ell^{-14} \lambda^{-12\ve}+ \ell^{-9} \lambda^{-14\ve}+ \ell^{-11}\lambda^{-13\ve}+\ell^{\frac12}\nonumber  \\
	& \lesssim \delta_{q+2}.
\end{align}
Finally, the $L^1_tC_x$-estimate \eqref{rl1} of Reynolds stress is verified
at level $q+1$.

\begin{remark}\label{rq1-incom}
For the case of the hypo-viscous INS,
the Reynolds stress has much simpler expression.
Namely, we have
	\begin{align}  \label{ru-imcom-2}
		\displaystyle\div\mathring{R}_{q+1} - \nabla P_{q+1}
		&\displaystyle = \underbrace{\partial_t (w_{q+1}^{(p)}+w_{q+1}^{(c)}) +\nu(-\Delta)^{\alpha} w_{q+1} +\div\big(u_\ell \otimes w_{q+1} + w_{q+ 1} \otimes u_\ell\big) }_{ \div\mathring{R}_{lin} +\nabla P_{lin} }   \notag\\
		&\displaystyle\quad+ \underbrace{\div (w_{q+1}^{(p)} \otimes w_{q+1}^{(p)}+  \mathring{R}_\ell)+\partial_t \wo}_{\div\mathring{R}_{osc} +\nabla P_{osc}}  \notag\\
		&\displaystyle\quad+ \underbrace{\div\Big((w_{q+1}^{(c)}+\wo)\otimes w_{q+1}+ w_{q+1}^{(p)} \otimes (w_{q+1}^{(c)}+\wo) \Big)}_{\div\mathring{R}_{cor} +\nabla P_{cor}}\notag\\
		&\quad+ \underbrace{\div \(u_{\ell}\otimes u_{\ell}-\left(\u \otimes\u\right) *_{x} \phi_{\ell} *_{t} \varphi_{\ell}\)}_{\div\mathring{R}_{com} +\nabla P_{com}},
	\end{align}
where the linear error
\begin{align}
	\mathring{R}_{lin} & := \mathcal{R}\(\partial_t (w_{q+1}^{(p)} +w_{q+1}^{(c)}  )\)
	+ \nu \mathcal{R} (-\Delta)^{\a} w_{q+1} + \mathcal{R}\P_H \div \( u_\ell \mathring{\otimes} w_{q+1} + w_{q+ 1}
	\mathring{\otimes} u_\ell\), \label{lin-2}
\end{align}
the oscillation error
\begin{align}\label{osc-2}
	\mathring{R}_{osc} :=& \sum_{k \in \Lambda } \mathcal{R} \P_H\P_{\neq 0}\left(\g^2 \P_{\neq 0}(W_{(k)}\otimes W_{(k)})\nabla (a_{(k)}^2)\right) -\sigma^{-1}\sum_{k\in \Lambda}\mathcal{R} \P_H \P_{\neq 0}\(h_{(k)}k\otimes k\p_t\nabla(a_{(k)}^{2})\),
\end{align}
the corrector error
\begin{align}
	\mathring{R}_{cor} &
	:= \mathcal{R} \P_H \div \bigg( w^{(p)}_{q+1} \mathring{\otimes} (w_{q+1}^{(c)} +\wo)
	+ (w_{q+1}^{(c)} +\wo) \mathring{\otimes} w_{q+1} \bigg). \label{cor-2}
\end{align}
and the commutator error
\begin{align}
	\mathring{R}_{com} &
	:= \mathcal{R} \P_H \div \bigg( u_{\ell}\mathring{\otimes} u_{\ell}-\left(\u \mathring{\otimes}\u\right) *_{x} \phi_{\ell} *_{t} \varphi_{\ell} \bigg). \label{com-2}
\end{align}
Thus, by virtue of Remarks~\ref{incom-3.7} and \ref{rem-3.10},
arguing as in a fashion as above
(one may let the density $\rho_q\equiv 1$ for all $q\in\mathbb{N}$
to simplify the arguments),
we can also obtain the key inductive estimates
for the Reynolds stress at level $q+1$
\begin{align} \label{rq1b-2}
\|\mathring R_{q+1} \|_{C_{t,x}^1} \lesssim \lambda_{q+1}^9,\quad	\|\mathring R_{q+1} \|_{L^1_{t}C_x} \lesssim \delta_{q+2}.
\end{align}
\end{remark}

\section{Proof of main results}  \label{Sec-Proof-Main}

We are now in stage to prove the main results,
i.e. Theorems  \ref{Thm-Nonuniq-CNS}, \ref{Thm-Nonuniq-hypoNSE},
and \ref{Thm-hypoNSE-Euler-limit} and \ref{Thm-Iterat}.
Let us start with the proof of the main iteration result in Theorem \ref{Thm-Iterat}.
\medskip

\paragraph{\bf Proof of Theorem \ref{Thm-Iterat}}
Since  the iterative estimates \eqref{rhobd}-\eqref{m-L1tLace-conv}
have been verified in the previous sections,
below we focus on the remaining inductive eatimate \eqref{suppru}
for the temporal support.

For this purpose, by the definition of $w_{q+1}$, $\thq$ and $T_q$ in \eqref{velocity perturbation}, \eqref{zq1-def} and \eqref{def-tq},
respectively,
we have
	\begin{align}  \label{sup-w}
		& \supp_t w_{q+1}  \subseteq \bigcup_{k\in \Lambda}\supp_t a_{(k)} \subseteq N_{2\ell} ([T_q,T]),
	\end{align}
and
	\begin{align}  \label{sup-z}
	& \supp_t z_{q+1}  \subseteq N_{2\ell} ([T_q,T]),
\end{align}
which, via \eqref{q+1 velocity} and \eqref{def-rqq},
yields that
\begin{align}\label{supp-mq}
& \supp_t (\nabla\rho_{q+1},m_{q+1})\subseteq \supp_t (\nabla\rho_\ell, m_{\ell}) \cup \supp_t (\nabla\thq,w_{q+1}) \subseteq N_{2\ell} ([T_q,T]).
\end{align}

Moreover, using \eqref{rucom} we derive
\begin{align}\label{supp-rq}
\supp_t \rr_{q+1} &\subseteq \bigcup\limits_{k\in \Lambda}\supp_t
    (a_{(k)},  m_\ell,   \nabla \rho_{q+1},  \nabla \rho_{\ell})\cup  N_{\ell}(\supp_t \rr_{q} ) \notag\\
&\subseteq \bigcup\limits_{k\in \Lambda}\supp_t
    (a_{(k)},  m_\ell,  \nabla \thq,  \nabla \rho_{\ell})\cup  N_{\ell}(\supp_t \rr_{q} ) \notag\\
&\subseteq N_{2\ell} ([T_q,T]).
\end{align}
Thus, we conclude from \eqref{supp-mq} and \eqref{supp-rq} that
\begin{align}\label{supp-mrr}
	& \supp_t (\nabla\rho_{q+1}, m_{q+1}, R_{q+1})\subseteq  N_{2\ell} ([T_q,T]),
\end{align}
which along with the fact that $2\ell \ll \delta_{q+2}^{1/2}$
verifies \eqref{suppru}.
Therefore, the proof of Theorem \ref{Thm-Iterat} is complete.
\hfill $\square$

\begin{remark}\label{rem-supp-incom}
Note that,
in the incompressible case,
we also have that, similarly to \eqref{supp-mrr},
\begin{align*}
& \supp_t (u_{q+1}, R_{q+1})\subseteq  N_{2\ell}([T_q,T]).
\end{align*}
\end{remark}

\medskip

\paragraph{\bf Proof of Theorem \ref{Thm-Nonuniq-hypoNSE}}
We verify the statements $(i)$-$(iv)$ in Theorem~\ref{Thm-Nonuniq-hypoNSE} separately.

$(i)$. Let $\rho_0=\wt \rho$, $m_0=\wt m$ and set
\begin{align}\label{r0u}
	& R_0 :=\mathcal{R}\(\p_t \wt m
      +\mu(-\Delta)^{\alpha} (\wt\rho^{-1}\wt m)
      -(\mu+\nu)\nabla\div (\wt \rho^{-1}\wt m)\) + \wt\rho^{-1}\wt m\otimes\wt m+ \mathcal{R}(\nabla P(\wt \rho)).
\end{align}
In view of the fact that $(\wt \rho, \wt m)$ is a smooth solution to the transport equation $\eqref{equa-NS}$
and the identity \eqref{r-iden},
we deduce that $(p_0, m_0, \rr_0)$ is a relaxed solution to \eqref{equa-nsr}.
Moreover, for $a$ sufficiently large,
\eqref{rhobd}-\eqref{rl1} are satisfied at level $q=0$.
Thus, in view of Theorem~\ref{Thm-Iterat},
there exists a sequence of relaxed solutions $\{(\rho_q, m_{q},\rr_{q})\}_{q}$
to \eqref{equa-nsr}, which satisfy \eqref{rhobd}-\eqref{suppru} for all $q\geq 0$.

Therefore, using \eqref{rho-l9-conv}
we have that for $a$ large enough,
\begin{align}  \label{rhoq1-rhoq-ve}
\sum_{q \geq 0}\norm{ \rqq - \rq }_{C_{t}C_x^1}
 \lesssim  \sum_{q \geq 0}\delta_{q+2}^{\frac{1}{2}}
 \lesssim \sum\limits_{q\geq 2} a^{-\beta b q}
 \lesssim \frac{a^{-\beta b}}{a^{\beta b}-1}
 \leq \ve_*.
\end{align}
In particular, this yields that $\{\rho_q\}_{q\geq 0}$ is a Cauchy sequence in $C_{t}C_x^1$,
and so there exists $\rho\in C_{t}C^1_x$ such that
\begin{align} \label{rhoq-rho}
   \lim_{q\rightarrow\infty} \rho_q=\rho\ \  in\  C_{t}C_x^1,
\end{align}
which along with \eqref{rhobd} yields that
\begin{align} \label{rhoq1-rho1}
   \lim_{q\rightarrow\infty} \rho_q^{-1} = \rho^{-1}\ \ in\ C_{t,x}.
\end{align}

Moreover, using interpolation, \eqref{la}, \eqref{mc1} and \eqref{m-L2t-conv}
we derive that for any $\beta'\in (0,\frac{\beta}{4+\beta})$,
\begin{align}
\sum_{q \geq 0} \norm{ m_{q+1} - m_q }_{H^{\beta'}_{t}C_x}
	\leq  & \, \sum_{q \geq 0} \norm{ m_{q+1} - m_q }_{L^2_{t}C_x}^{1- \beta'}\norm{ m_{q+1} - m_q }_{C^1_{t,x}}^{\beta'}\notag\\
	\lesssim  &\,  \sum_{q \geq 0} \delta_{q+1}^{\frac{1-\beta'}{2}}\lambda_{q+1}^{4 \beta' } \notag\\
	\lesssim &\,   \delta_{1}^{\frac{1-\beta'}{2}}\lambda_{1}^{4 \beta' } +
	\sum_{ q \geq 1}   \lambda_{q+1}^{-\beta(1 - \beta')  + 4\beta'  } <\9, \label{interpo}
\end{align}
where the last step was due to $-\beta(1 - \beta')  + 4\beta' <0$. By \eqref{m-L1tLace-conv}, we also have
\begin{align}
	\sum_{q \geq 0}\norm{ m_{q+1} - m_q }_{C_tH^{-1}_x} < \9.  \label{result-lw-1}
\end{align}
Thus, there exists $m\in H^{\beta'}_{t}C_x\cap C_tH^{-1}_x$ such that
\begin{align}  \label{mq-m}
\lim_{q\rightarrow\infty}m_q =m\ \  in\ H^{\beta'}_{t}C_x\cap C_tH^{-1}_x.
\end{align}
Hence, it follows from \eqref{rhoq1-rho1} and \eqref{mq-m} that
\begin{align}
   \lim_{q\rightarrow\infty} \rho_q^{-1} m_q &= \rho^{-1} m\ \  in\ L^2_tC_x,\\
  \lim_{q\rightarrow\infty}  \rho_q^{-1} m_q \otimes m_q
  &=  \rho^{-1} m \otimes m\ \ in\ L^1_t C_x,
\end{align}
and
\begin{align}
m(t)\rightharpoonup m_0\quad\text{weakly in}\ H^{-1}(\T^d),
\end{align}
for some $m_0\in H^{-1}(\T^d)$. By the mean-valued theorem, \eqref{P'-P''-bdd} and \eqref{rhoq1-rho1},
\begin{align}
   \lim_{q\rightarrow\infty} P(\rho_q) = P(\rho)\ \ in\ C_{t,x}.
\end{align}
Thus, taking into account \eqref{rl1}
\begin{align}
  \lim_{q \to \infty} R_{q} = 0\ \  in\ L^1_{t}C_x,
\end{align}
we thus conclude that $(\rho,m)$ is a weak solution to \eqref{equa-NS}.

$(ii)$. Regarding the regularity statement $(ii)$,
in view of \eqref{rhoq-rho} and \eqref{mq-m},
we only need to show that
\begin{align} \label{m-LpCs}
    m\in L^p_tC^s_x.
\end{align}
To this end, using \eqref{m-L1tLace-conv}
and estimating as in \eqref{result-lw} we derive
\begin{align}
	\sum_{q \geq 0}\norm{ m_{q+1} - m_q }_{L^p_tC^{s}_{x}} < \9,  \label{result-lw}
\end{align}
which yields that $\{m_q\}_{q\geq 0}$
is also a Cauchy sequence in $L^p_tC^{s}_{x}$.
Hence, by the uniqueness of weak limits,
we obtain \eqref{m-LpCs} and thus prove the regularity statement $(ii)$.

$(iii)$. Concerning the mass preservation in $(iii)$,
by \eqref{rhopre},
\begin{align*}
& \int_{\mathbb{T}^d}\rho_{q}(t,x)\d x=\int_{\mathbb{T}^d}\rho_{0}(t,x)\d x= \int_{\mathbb{T}^d} \wt \rho(t,x)\d x,
\end{align*}
for all $q\in\mathbb{N}$ and $t\in [0,T]$.
Then, using \eqref{rhoq-rho} to pass to the limit $q\to \infty$ we get
\begin{align*}
& \int_{\mathbb{T}^d}\rho(t,x)\d x= \int_{\mathbb{T}^d} \wt \rho(t,x)\d x,\ \ \forall\ t\in [0,T],
\end{align*}
which verifies the mass perservation statement $(iii)$.

$(iv).$ Regarding the small deviations statement $(iv)$.
The small deviation between $\rho$ and $\wt \rho$ in $C_tC^1_x$
is implied by \eqref{rhoq1-rhoq-ve}.
Moreover, using \eqref{m-L1tLace-conv}
and estimating as in \eqref{result-lw} we also get
\begin{align}\label{e6.5}
	&\norm{m - \tilde{m} }_{L^1_tC_x}+\norm{ m - \tilde{m} }_{L^p_tC^s_x}+\norm{ m - \tilde{m} }_{C_tH^{-1}_x}\notag\\
	\leq &\,\sum_{q \geq 0}(\norm{m_{q+1} - m_q }_{L^1_tC_x}+ \norm{ m_{q+1} - m_q }_{L^p_tC^s_x}+\norm{ m_{q+1} - m_q }_{C_tH^{-1}_x})\notag \\
	\leq &\,2\sum_{q\geq 0} \delta_{q+1}^\frac 12  \leq \ve_*
\end{align}
for $a$ sufficiently large,
thereby proving the statement $(iv)$.

$(v).$ At last, for the temporal supports, by \eqref{rhoq-rho} and \eqref{mq-m}, taking into account
$\sum_{q\geq 0}\delta_{q+2}^{1/2}\leq \ve_*$
for $a$ large enough, we obtain
\begin{align}\label{supp-con}
	&\supp_t (\nabla\rho, m) \subseteq N_{\sum_{q\geq 0}\delta_{q+2}^{1/2}}([T_*,T])\subseteq N_{\ve_*}( [\wt T,T]).
\end{align}

Therefore, the proof of Theorem~\ref{Thm-Nonuniq-hypoNSE} is complete.
\hfill $\square$
\medskip

\paragraph{\bf Proof of Theorem~\ref{Thm-Nonuniq-CNS}}
Without loss of generality, we may choose $\wt \rho=1$
and any non-trival divergence-free
smooth functions $\wt m$ such that
\begin{align*}
	\supp_t \wt m \subseteq [\frac{T}{4}, \frac{3T}{4}].
\end{align*}
For every $j\geq 1$, let
\begin{align} \label{wtmj-wtm-v}
 \wt m_j:=\frac{j}{c_0}\wt m,
\end{align}
where $j \in \mathbb{N}_+$, $c_0:= \|\wt m\|_{L^1(0,T;C_x)} (>0)$.
Thus, $(\wt \rho, \wt m_j)$, $j\geq 1$, are smooth solutions to the transport equation \eqref{equa-trans}.

Let $0<\ve_*<\min\{\frac 18,\frac{T}{8}\}$.
Then, for every $j\geq 1$,
Theorem~\ref{Thm-Nonuniq-hypoNSE} gives
a weak solution $(\rho_j,m_j)$ to \eqref{equa-mns} such that
\begin{align}\label{pf-cor1.3-m}
   \|m_j-\wt m_j\|_{L^1(0,T;C_x)}\leq \ve_*.
\end{align}
and
\begin{align}
	& \int_{\mathbb{T}^d}\rho(t,x)\d x= \int_{\mathbb{T}^d} \wt\rho(t,x)\d x,\ \text{for}\ \forall t\in[0,T],\label{mass-pre}\\
	&\supp_t(\nabla \rho_j, m_j)
    \subseteq [\frac{T}{8}, T].\label{tem-supp}
\end{align}

We note that, by \eqref{tem-supp},
$\rho_j$ is independent of the spatial variable
for $t\in[0,\frac T 8]$.
Then, taking into account \eqref{mass-pre} we derive
\begin{align}
	\rho_j(0)=\aint_{\mathbb{T}^d}\rho_j(0,x)\d x=\aint_{\mathbb{T}^d}\wt \rho(0,x)\d x=1.
\end{align}
Therefore, $(\rho_j,m_j)$, $j\geq 1$, are weak solutions to \eqref{equa-NS}
with the same initial datum $(\rho_j(0), m_j(0))=(1,0)$.

Now we use \eqref{wtmj-wtm-v} and \eqref{pf-cor1.3-m}
to compare the solutions $m_j$ and $m_k$, $j\not = k$,
and derive that
\begin{align*}
	\|m_j-m_{k}\|_{L^1(0,T;C_x)}&\geq\|\wt m_j -\wt m_k\|_{L^1(0,T;C_x)}- \|m_j -\wt m_j\|_{L^1(0,T;C_x)}-\|m_k-\wt m_k\|_{L^2(0,T;C_x)} \\
	&\geq\frac{|j-k|}{c_0}\| \wt m\|_{L^1(0,T;C_x)}- 2\ve_* \\
	& = |j-k|-2\ve_*>\frac12,
\end{align*}
which yields that $m_j\not\equiv m_k$ on $[0,T]$ for every $j \not = k$,
thereby finishing the proof of Theorem~\ref{Thm-Nonuniq-CNS}.
\hfill $\square$

\begin{remark} \label{Rem-proof-INS}
For the hypo-viscous INS \eqref{equa-NSE-Incomp},
due to Remarks~\ref{rem-3.10}, \ref{rq1-incom} and \ref{rem-supp-incom},
Corollary \ref{Cor-Nonuniq-NSE} can be proved in a similar manner
as that of Theorem \ref{Thm-Nonuniq-CNS}.
\end{remark}

\paragraph{\bf Proof of Theorem~\ref{Thm-hypoNSE-Euler-limit}}
Let $\left\{\phi_{\varepsilon}\right\}_{\varepsilon>0}$
and $\left\{\varphi_{\varepsilon}\right\}_{\varepsilon>0}$
be two families of standard compactly supported Friedrichs mollifiers on $\T^{d}$ and $[-T,T]$, respectively.
For every $n\geq 1$, set
\begin{align} \label{un-u-Bn-B}
\rho_{n} :=\left(\rho *_{x} \phi_{\lambda_{n}^{-1}}\right) *_{t} \varphi_{\lambda_{n}^{-1}},\quad	m_{n} :=\left(m *_{x} \phi_{\lambda_{n}^{-1}}\right) *_{t} \varphi_{\lambda_{n}^{-1}},\quad P_{n} :=\left(P(\rho) *_{x} \phi_{\lambda_{n}^{-1}}\right) *_{t} \varphi_{\lambda_{n}^{-1}},
\end{align}
restricted to $[0,T]$,
where $\lbb_n := a^{b^n}$ and $ \delta_{n}:= \lbb_n^{-2\beta}$.

Here, we choose $a\in \mathbb{N}$ sufficiently large and $\beta>0,\ b\in 2\mathbb{N}$ such that
\begin{align} \label{b-beta-ve-n}
	b>\frac{1000}{\varepsilon}, \ \
	0<b^2\beta<\min\left\{\frac{1}{100}, \frac{\wt \beta}{4} \right \}
\end{align}
with $\varepsilon\in \mathbb{Q}_+$ sufficiently small such that
\begin{equation}\label{e3.1-n}
	\varepsilon\leq\frac{1}{20}\min\left\{1-\alpha,\a, \frac{2\a}{p}-\a-s, \frac{\wt \beta}{2(1+\wt \beta)}, 1-\frac{\wt \beta}{2}\right\}\quad \text{and}\quad b\ve\in\mathbb{N}.
\end{equation}

Since $(\rho,m)$ is a weak solution to the compressible Euler equations \eqref{equa-Euler},
we infer that $(\rho_n, m_n)$ satisfies
\begin{equation}\label{equa-moll-euler}
	\left\{\aligned
	& \p_t \rho_n +\div m_n = 0,\\
	&\p_t m_n+ \mu_{n} (-\Delta)^{\a}(\rho_n^{-1}m_n)
     - (\mu_n+\nu_n) \nabla \div (\rho_n^{-1}m_n) +\div(\rho_n^{-1}m_n\otimes m_n) + \nabla P(\rho_{n})=\div R_n.
	\endaligned
	\right.
\end{equation}
where $\mu_{n}:=\lambda_n^{-2}\mu$, $\nu_n= \lambda_n^{-2}\nu$,
and the Reynolds stress
	\begin{align}\label{def-rn}
	R_{n} 	&:=\mathcal{R}\nabla( P(\rho_{n})-P_n ) +\mathcal{R}\lambda_n^{-2}\mu(-\Delta)^{\a} (\rho_n^{-1}m_n) -\mathcal{R}\lambda_n^{- 2}(\mu+\nu) \nabla \div (\rho_n^{-1}m_n)\notag\\
	&\quad  + \rho_n^{-1} m_{n}\otimes m_{n}- (\rho^{-1}m \otimes m ) *_{x} \phi_{\lambda_{n}^{-1}}*_{t} \varphi_{\lambda_{n}^{-1}}.
\end{align}

We claim that for $a$ sufficiently large,
universal for $n$,
$(\rho_n,m_n,R_n)$ satisfy the iterative estimates \eqref{rhobd}-\eqref{rl1} at level $q=n+1$.

To this end, let us first consider the most delicate
$L^1_tC_x$-decay estimate \eqref{rl1} of $R_n$,
for $\ve$ small enough,
\begin{align}\label{est-rn-l1l9}
 \|R_n\|_{L^1_tC_x}
&\lesssim \| P(\rho_{n})-P_n\|_{L^1_{t}C^{\ve}_x} +\| \lambda_n^{-2 }|\nabla|^{2\a-1}(\rho_n^{-1}m_n) \|_{L^1_{t}C^{\ve}_x}  +\|\lambda_n^{-2}\div (\rho_n^{-1}m_n)\|_{L^1_{t}C^{\ve}_x} \notag\\
&\quad  + \|\rho_n^{-1} m_{n}\otimes m_n- (\rho ^{-1}m \otimes m) *_{x} \phi_{\lambda_{n}^{-1}}*_{t} \varphi_{\lambda_{n}^{-1}}\|_{L^1_{t}C^{\ve}_x} \notag \\
    &=: K_1 + K_2 + K_3 + K_4.
\end{align}
Using the standard mollification estimates we deduce that,
\begin{align} \label{rhon-bdd}
	& 0<c_1 \leq \rho_n\leq c_2.
\end{align}
Estimating as in the proof of \eqref{est-rcom1}, by \eqref{rhon-bdd} we deduce
\begin{align}\label{est-rn1}
	K_1
    & \lesssim \|P(\rho_{n})-P_n\|_{C_{t,x}}^{1-\ve}\|P(\rho_{n})-P_n\|_{C_{t}C^{1}_x}^\ve\notag\\
	&\lesssim ( \| P(\rho_{n}) - P(\rho)\|_{C_{t,x}}+ \|P(\rho)-P_n\|_{C_{t,x}})^{1-\ve} ( \| P(\rho_{n}) \|_{C_tC^1_{x}}+\|P_n\|_{C_tC^1_{x}})^\ve\notag\\
	&\lesssim ( \| \rho_{n}- \rho \|_{C_{t,x}}+\lambda_{n}^{-\wt \beta} \|P(\rho)\|_{C^{\wt \beta}_{t,x}})^{1-\ve}  \lambda_{n}^\ve \notag\\
	&\lesssim ( \lambda_{n}^{-\wt \beta} \|\rho \|_{C^{\wt \beta}_{t,x}}+\lambda_{n}^{-\wt \beta} \|\rho \|_{C^{\wt \beta}_{t,x}})^{1-\ve}  \lambda_{n}^\ve  \notag\\
	&\lesssim  \lambda_{n}^{-\wt \beta(1-\ve)+\ve} .
\end{align}
For the $L^1_{t}C^{\ve}_x$-estimate of $K_2$, if $\a\in (0,\frac12]$,
\begin{align}\label{est-rn2.1}
	\| \lambda_n^{-2 }|\nabla|^{2\a-1}(\rho_n^{-1}m_n) \|_{L^1_{t}C^{\ve}_x}
	&\lesssim \lambda_n^{-2 }\|\rho_n^{-1}m_n \|_{L^1_{t}C^{\ve}_x}\notag\\
	&\lesssim \lambda_n^{-2 } \| \rho_n^{-1}m_n \|_{L^1_{t}C_x}^{1-\frac{\ve}{3}}
	\|\rho_n^{-1}m_n \|_{L^1_{t}C^{3}_x}^{\frac{\ve}{3}}\notag\\
	&\lesssim \lambda_n^{-2 }( \sum_{N_1+N_2 = 3}\|\rho_n^{-1} \|_{C_{t}C^{N_1}_x} \| m_n \|_{C_{t}C^{N_2}_x} )^{\frac{\ve}{3}}\notag\\
	&\lesssim \lambda_n^{-2 +\ve},
\end{align}
and if $\a\in (\frac12,1)$, an application of interpolation yields
\begin{align}\label{est-rn2}
 \| \lambda_n^{-2 }|\nabla|^{2\a-1}(\rho_n^{-1}m_n) \|_{L^1_{t}C^{\ve}_x}
 &\lesssim \lambda_n^{-2 } \| \rho_n^{-1}m_n \|_{L^1_{t}C_x}^{1-\frac{2\a-1+\ve}{3}}
\|\rho_n^{-1}m_n \|_{L^1_{t}C^{3}_x}^{\frac{2\a-1+\ve}{3}}\notag\\
&\lesssim \lambda_n^{-2 }( \sum_{N_1+N_2 = 3}\|\rho_n^{-1} \|_{C_{t}C^{N_1}_x} \| m_n \|_{C_{t}C^{N_2}_x} )^{\frac{2\a-1+\ve}{3}}\notag\\
&\lesssim \lambda_n^{-2 }\lambda_n^{2\a-1+\ve} \lesssim \lambda_n^{2\a-3+\ve}.
\end{align}
Thus, we conclude from \eqref{est-rn2.1} and \eqref{est-rn2} that
\begin{align}\label{est-rn2-con}
	K_2 \lesssim \lambda_n^{-2 +\ve}+\lambda_n^{2\a-3+\ve}\lesssim \lambda_n^{-1 +\ve},
\end{align}
and, similarly,
\begin{align}\label{est-rn3}
	K_3 \lesssim \lambda_n^{-1+\ve}.
\end{align}
Moreover, we also have
\begin{align}\label{est-rn4}
	K_4
    \lesssim& 	\|\rho_n^{-1} m_{n}\otimes m_n- (\rho ^{-1}m \otimes m) *_{x} \phi_{\lambda_{n}^{-1}}*_{t} \varphi_{\lambda_{n}^{-1}}\|_{C_{t,x}}^{1-\ve}	\notag \\
    &  \times \|\rho_n^{-1} m_{n}\otimes m_n- (\rho ^{-1}m \otimes m) *_{x} \phi_{\lambda_{n}^{-1}}*_{t} \varphi_{\lambda_{n}^{-1}}\|_{C_{t}C_x^1}^{\ve} \notag \\
    =:& K_{41}^{1-\ve} \times  K^\ve_{42}.
\end{align}
Using the standard mollification estimate we get
\begin{align}\label{est-rn4-1}
 K_{41}
  \lesssim& \|\rho_n^{-1}  m_{n}\otimes( m_{n} -m) \|_{C_{t,x}}+ \|\rho_n^{-1}  (m_{n}-m) \otimes m  \|_{C_{t,x}} + \|(\rho_n^{-1}-\rho^{-1} ) m\otimes m \|_{C_{t,x}} \notag\\
	&\quad+ \| \rho^{-1}m\otimes m -(\rho ^{-1}m \otimes m) *_{x} \phi_{\lambda_{n}^{-1}}*_{t} \varphi_{\lambda_{n}^{-1}} \|_{C_{t,x}} \notag \\
	\lesssim& \| \rho_n^{-1}\|_{C_{t,x}} \|m_{n}-m \|_{C_{t,x}} (\|m_n\|_{C_{t,x}}+\|m \|_{C_{t,x}}  )+  \|\rho_n^{-1}-\rho^{-1}\|_{C_{t,x}}\|m\|_{C_{t,x}}^2
	     + \lambda_{n}^{-\wt \beta} \|  \rho^{-1}m\otimes m  \|_{C^{\wt \beta}_{t,x}}  \notag\\
	\lesssim& \lambda_{n}^{-\wt \beta} \| \rho_n^{-1}\|_{C_{t,x}} \| m \|_{C^{\wt \beta}_{t,x}} \|m \|_{C_{t,x}}
      + \lambda_{n}^{-\wt \beta} \| \rho\|_{C^{\wt \beta}_{t,x}}\|m\|_{C_{t,x}}^2
         + \lambda_{n}^{-\wt \beta}\|  \rho^{-1} \|_{C^{\wt \beta}_{t,x}}\|  m \|_{C^{\wt \beta}_{t,x}}^2 \notag \\
    \lesssim & \lambda_{n}^{-\wt \beta},
\end{align}	
where we also used
\begin{align*}
   \|\rho_n^{-1} - \rho^{-1}\|_{C_{t,x}}
   \lesssim \|\rho_n^{-1}\|_{C_{t,x}}  \|\rho^{-1}\|_{C_{t,x}}  \|\rho_n - \rho\|_{C_{t,x}}
   \lesssim \lbb_n^{-\wt \beta} \|\rho\|_{C_{t,x}^{\wt \beta}}.
\end{align*}
We also have
\begin{align}\label{est-rn4-2}
   K_{42}
  & \lesssim \sum_{N_1+N_2+N_3 =1} \| \rho_n^{-1}\|_{C_{t}C_x^{N_1}} \| m_n \|_{C_{t}C_x^{N_2}} \|m_n \|_{C_{t}C_x^{N_3}} + \lambda_{n} \| \rho ^{-1}m \otimes m\|_{C_{t,x} }\notag\\
&\lesssim \lambda_{n}.
\end{align}
Thus, plugging \eqref{est-rn4-1} and \eqref{est-rn4-2} into \eqref{est-rn4}
we lead to
\begin{align}\label{est-rn4-end}
  K_4 \lesssim  \lambda_{n}^{-\wt \beta(1-\ve)+\ve}.
\end{align}

Therefore, plugging \eqref{est-rn1}, \eqref{est-rn2}, \eqref{est-rn3}
and \eqref{est-rn4-end} into \eqref{est-rn-l1l9}
we arrive at
\begin{align} \label{Ru-wtM-L1}
	\|R_n\|_{L^{1}_{t}C_x}
	 \lesssim \lambda_n^{-1+\ve}+\lambda_{n}^{-\wt \beta(1-\ve)+\ve}\leq \delta_{n+2},
\end{align}
where we also use the fact that $-\wt \beta(1-\ve)+\ve<-\frac{\wt \beta}{2}<-2\beta b^2$ and $-1+\ve<-2\beta b^2 $ in the last step. Thus, the $L^1_tC_x$ inductive estimate \eqref{rl1}
can be verified at level $n+1$.

Now, let us turn to the verification of \eqref{rhobd}-\eqref{rc1}, by \eqref{rhon-bdd} we have
\begin{align}\label{rhon-bdd2}
	\frac{c_1}{2} -\lambda_{n+1}^{-\beta}\leq \rho_n\leq  \frac{c_2}{2}+\lambda_{n+1}^{-\beta}.
\end{align}
Moreover,
we also have that for $1\leq N\leq 4$ and $M=0,1$,
\begin{align}
   \|\partial_t^M \rho_n\|_{C_ 	tC_x^N}
   \lesssim \lbb_n^{M+N} \|\rho\|_{C_{t,x}}
   \lesssim \lbb_n^{1+N}
   \leq \lbb_{n+1}^{\frac {\ve}{4}}.
\end{align}
Hence, \eqref{rhobd} and \eqref{rhoc1} are verified at level $n+1$.

Moreover, we also see that
\begin{align} \label{un-Bn-C1}
	\left\|m_{n}\right\|_{C_{t,x}^N} \lesssim \lambda_{n}^N \left\|m \right\|_{C_{t,x}} \lesssim\lambda_{n}^N
	\ll \lbb_{n+1}^{2N+2},
\end{align}
which yields  \eqref{mc1} at level $n+1$.

Finally, in view of the Sobolev embedding $W^{1,d+2}_{t,x}\hookrightarrow L^\9_{t,x}$, we obtain
\begin{align}\label{rn-c1}
	\|{R}_n\|_{C_{t,x}^1} & \lesssim \| {R}_n\|_{W^{2,d+2}_{t,x}} \notag\\
	& \lesssim  \|P(\rho_{n})\|_{W^{2,d+2}_{t,x}}+\| P(\rho)*_{x} \phi_{\lambda_{n}^{-1}}*_{t} \varphi_{\lambda_{n}^{-1}}\|_{W^{2,d+2}_{t,x}} +\| \lambda_n^{-2\a}|\nabla|^{2\a-1}(\rho_n^{-1}m_n) \|_{W^{2,d+2}_{t,x}}  \notag\\
	&\quad   +\|\lambda_n^{-2}\div (\rho_n^{-1}m_n)\|_{W^{2,d+2}_{t,x}}+ \|\rho_n^{-1} m_{n}\otimes m_n\|_{W^{2,d+2}_{t,x}}+\| (\rho ^{-1}m \otimes m) *_{x} \phi_{\lambda_{n}^{-1}}*_{t} \varphi_{\lambda_{n}^{-1}}\|_{W^{2,d+2}_{t,x}}\notag\\
	& \lesssim  \|P(\rho_{n})\|_{C^{2 }_{t,x}}+\lambda_{n}^{2}\| P(\rho) \|_{C_{t,x}} +\lambda_n^{-2\a}\| |\nabla|^{2\a-1}(\rho_n^{-1}m_n) \|_{W^{2,d+2}_{t,x}} +\lambda_n^{- 2}\| \rho_n^{-1}m_n\|_{C^{3}_{t,x}}  \notag\\
	&\quad  + \|\rho_n^{-1} m_{n}\otimes m_n\|_{C^{2 }_{t,x}}+ \lambda_{n}^{2}\|\rho ^{-1}m \otimes m \|_{C_{t,x}}\notag\\
	& \lesssim  \|P(\rho_{n})\|_{C^{2 }_{t,x}}+\lambda_{n}^{2}\| P(\rho) \|_{C_{t,x}} +\lambda_n^{-2\a}\| |\nabla|^{2\a-1}(\rho_n^{-1}m_n) \|_{W^{2,d+2}_{t,x}} +\lambda_n^{- 2}\sum_{N_1+N_2\leq 3}\| \rho_n^{-1}\|_{C^{N_1}_{t,x}}\|m_n\|_{C^{N_2}_{t,x}}  \notag\\
	&\quad  + \sum_{N_1+N_2+N_3\leq 2}\|\rho_n^{-1}\|_{C^{N_1}_{t,x}}\| m_{n}\|_{C^{N_1}_{t,x}}\|m_n\|_{C^{N_3}_{t,x}}+ \lambda_{n}^{2}\|\rho ^{-1}m \otimes m \|_{C_{t,x}}.
\end{align}
By \eqref{p-bound} and standard mollification estimates, we have
\begin{align}\label{pn-c2}
	\|P(\rho_{n})\|_{C^{2 }_{t,x}} &\lesssim \|P(\rho_{n})\|_{C_{t,x}}+ \|P'(\rho_{n})\|_{C_{t,x}}( \| \rho_{n}\|_{C_tC^2_x}+  \| \rho_{n}\|_{C^2_tC_x}+\| \rho_{n}\|_{C^1_tC^1_x} )\notag\\
	&\quad+ \|P''(\rho_{n})\|_{C_{t,x}}( \| \rho_{n}\|_{C_tC^1_x}^2+  \| \rho_{n}\|_{C^1_tC_x}^2)\notag\\
	&\lesssim \lambda_{n}^2.
\end{align}
If $\a\in (0,\frac12]$, then we have
\begin{align}\label{a12.1}
\| |\nabla|^{2\a-1}(\rho_n^{-1}m_n) \|_{W^{2,d+2}_{t,x}}\lesssim \| \rho_n^{-1}m_n \|_{C^{2}_{t,x}}\lesssim \sum_{N_1+N_2 \leq 2}\|\rho_n^{-1}\|_{C^{N_1}_{t,x}}\| m_{n}\|_{C^{N_2}_{t,x}} \lesssim \sum_{N_1+N_2 \leq 2}\lambda_{n}^{N_1+N_2}\lesssim \lambda_{n}^{2},
\end{align}
and if $\a\in (\frac12,1)$, an application of the interpolation inequality gives
\begin{align}\label{e5.31}
\| |\nabla|^{2\a-1}(\rho_n^{-1}m_n) \|_{W^{2,d+2}_{t,x}} &\lesssim \| \rho_n^{-1}m_n \|_{C^{2}_{t,x}}^{2-2\a}\| \rho_n^{-1}m_n \|_{C^{3}_{t,x}}^{2\a-1}\notag\\
	&\lesssim
	( \sum_{N_1+N_2 \leq 2}\|\rho_n^{-1}\|_{C^{N_1}_{t,x}}\| m_{n}\|_{C^{N_2}_{t,x}})^{2-2\a}
     ( \sum_{N_1+N_2 \leq 3}\|\rho_n^{-1}\|_{C^{N_1}_{t,x}}\| m_{n}\|_{C^{N_2}_{t,x}})^{2\a-1}\notag\\
	&\lesssim ( \sum_{N_1+N_2 \leq 2}\lambda_{n}^{N_1+N_2} )^{2-2\a}( \sum_{N_1+N_2 \leq 3}\lambda_{n}^{N_1+N_2} )^{2\a-1}\lesssim \lambda_{n}^{2\a+1}.
\end{align}
Plugging \eqref{pn-c2}-\eqref{e5.31} into \eqref{rn-c1}, we deduce that
\begin{align}
	\|{R}_n\|_{C_{t,x}^1}
	& \lesssim \lambda_{n}^2+\lambda_{n}^2 +\lambda_n^{2-2\alpha}+\lambda_n+\lambda_n^2+\lambda_n^2
      \lesssim \lambda_n^2.
\end{align}
Hence, taking $a$ sufficiently large, we verify the inductive estimate \eqref{rc1}  at level $n+1$.

Thus, we can apply Theorem~\ref{Thm-Iterat} to obtain a sequence of relaxed solutions
$\{(\rho_{n,q}, m_{n,q})\}_{q\geq 0}$ to \eqref{equa-moll-euler},
and then letting $q\rightarrow\infty$
we obtain a weak solution $(\rho^{( {n})},m^{( {n})} )\in C_{t,x}\times H^{\beta'}_tC_x$
to \eqref{equa-nsr}
for some $0<\beta'<\min\{\wt \beta, \beta/(8+\beta)\}$,
where $\beta$ is as in the proof of Theorem \ref{Thm-Iterat}.

Then, taking $\beta'$ sufficiently small such that
$0<\beta'<\min\{\wt \beta, \beta/(8+\beta)\}$
we deduce from \eqref{mc1} and \eqref{m-L2t-conv} that
\begin{align*}
	\|m^{( {n})}-m\|_{H^{\beta'}_tC_x}
	&\leq\|m^{( {n})}-m_{n}\|_{H^{\beta'}_tC_x}
	+\|m-m_{n}\|_{H^{\beta'}_tC_x} \notag\\
	&\lesssim \sum_{q=n+1}^{\infty} \| m_{n,q+1}-m_{n,q} \|_{L^2_tC_x}^{1-\beta'}
    \| m_{n,q+1}-m_{n,q} \|_{C^1_{t,x}}^{\beta'} + \|m-m_{n}\|_{C^{\beta'}_tC_x}  \notag\\
	&\lesssim \sum_{q=n+1}^{\infty}  \lambda_{q+1}^{-\beta(1-\beta^{\prime})} \lambda_{q+1}^{8 \beta^{\prime}}
          + \lbb_n^{-(\wt \beta- \beta')}\|m \|_{C^{\wt \beta}_{t,x}} \notag\\
	&\lesssim \frac{a^{nb(-\beta(1-\beta') + 8\beta)}}{a^{nb(\beta(1-\beta') - 8\beta)} -1}
              + a^{-(\wt \beta - \beta') b n}
	\leq \frac{1}{n},
\end{align*}
and, via \eqref{rho-l9-conv},
\begin{align*}
	\|\rho^{( {n})}-\rho\|_{C_{t,x}}
	&\leq\|\rho^{( {n})}-\rho_{n}\|_{C_{t,x}}
	+\|\rho-\rho_{n}\|_{C_{t,x}} \notag\\
	&\lesssim \sum_{q=n+1}^{\infty} \|\rho_{n,q+1}-\rho_{n,q}\|_{C_{t,x}}+  \|\rho-\rho_{n}\|_{ C_{t,x}}   \notag\\
	&\lesssim \sum_{q=n+1}^{\infty} \delta_{q+2}^{\frac12}+ \lbb_n^{-\wt \beta} \|\rho\|_{C^{\wt \beta}_{t,x}}   \notag \\
    &\lesssim \sum_{q=n+3}^\infty a^{-\beta b q} + \lbb_n^{-\wt \beta} \lesssim \frac{a^{-\beta b(n+2)}}{a^{\beta b}-1}  + \lbb_n^{-\wt \beta}
	\leq \frac{1}{n},
\end{align*}
where the last step is valid for $a$ sufficiently large.

Therefore, letting $n\to \infty$
we obtain the strong convergence \eqref{convergence}
and finish the proof of Theorem \ref{Thm-hypoNSE-Euler-limit}.
\hfill $\square$
\medskip

\noindent{\bf Acknowledgment.}
Y. Li thanks the support by NSFC (No. 11831011, 12161141004).
P. Qu thanks the supports by  NSFC (No. 12122104, 11831011)
and Shanghai Science and Technology Programs 21ZR1406000, 21JC1400600, 19JC1420101.
Z. Zeng and D. Zhang thank the supports by NSFC (No. 12271352).
D. Zhang also thanks supports by NSFC (No. 12161141004)
and Shanghai Rising-Star Program 21QA1404500.
Y. Li and D. Zhang are also grateful for the supports by
Institute of Modern Analysis--A Shanghai Frontier Research Center.\\

\noindent{\bf Conflict of interest.} The authors declare that they have no conflict of interest.\ \\

\noindent{\bf Data availability statement.} Data sharing is not applicable to this article as no datasets were generated or analysed during the current study.

\end{document}